\documentclass[12pt]{amsart}
\usepackage{amssymb,amsmath,amsthm}
\usepackage{esint}
\usepackage{float} \restylefloat{table}
\usepackage{tikz,tikz-cd}
\usepackage{hyperref}
\usepackage[msc-links, backrefs]{amsrefs}
\hypersetup{
  colorlinks   = true,	
  urlcolor     = blue,	
  linkcolor    = blue,	
  citecolor   = red	
}
\setlength{\textwidth}{\paperwidth} 
\setlength{\textheight}{\paperheight} 
\addtolength{\textwidth}{-2.5in}
\addtolength{\textheight}{-2.5in}
\calclayout
\newtheorem{theorem}{Theorem}[section]
\newtheorem{proposition}[theorem]{Proposition}
\newtheorem{corollary}[theorem]{Corollary}
\newtheorem{lemma}[theorem]{Lemma}
\newtheorem{definition}[theorem]{Definition}
\newtheorem{remark}[theorem]{Remark}
\numberwithin{theorem}{section} \numberwithin{equation}{section}
\newcommand{\beq}{\begin{small} \begin{equation}}
\newcommand{\eeq}{\end{equation} \end{small}}
\newcommand{\beqn}{\begin{small} \begin{equation*}}
\newcommand{\eeqn}{\end{equation*} \end{small}}
\newcommand\oiiint[1]{\ensuremath \includegraphics[trim=0 33 0 0,width=1.7em]{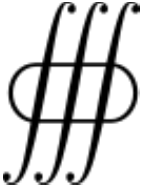}\!\! \raisebox{-11pt}{$\scriptstyle #1$}}
\newcommand{\hpg}[5]{{}_{#1}F_{#2}\! \left(\left.{\genfrac{}{}{0pt}{}{#3}{#4}}\right| #5 \right) }
\newcommand{\heun}{H\!\ell}
\newcommand{\Scale}{0.6}
\newcommand{\ScaleBig}{0.9}
\title[Calabi-Yau realizing symplectically rigid monodromy]{Calabi-Yau manifolds realizing \\symplectically rigid monodromy tuples}
\keywords{Calabi-Yau manifolds, elliptic surfaces, Picard-Fuchs equation, variation of Hodge structure, Euler integral transform, special functions}
\subjclass[2010]{14D0x, 14J32, 32G20, 33Cxx}
\author{Charles F. Doran}
\address{Dept.\!~of Mathematics, University of Alberta,
Edmonton, Alberta  T6G 2G1} \email{charles.doran@ualberta.ca}
\author{Andreas Malmendier}
\address{Dept.\!~of Mathematics \& Statistics, Utah State University,
Logan, UT 84322}
\email{andreas.malmendier@usu.edu}
\begin{document}
\begin{abstract}
We define an iterative construction that produces a family of elliptically fibered Calabi-Yau $n$-folds with section from a family of elliptic Calabi-Yau varieties of one dimension lower. Parallel to the geometric construction, we iteratively obtain for each family with a point of maximal unipotent monodromy, normalized to be at $t=0$, its Picard-Fuchs operator and a closed-form expression for the period holomorphic at $t=0$, through a generalization of the classical Euler transform for hypergeometric functions.  In particular, our construction yields one-parameter families of elliptically  fibered Calabi-Yau manifolds with section whose Picard-Fuchs operators realize all symplectically rigid Calabi-Yau differential operators with three regular singular points classified by Bogner and Reiter, but also non-rigid operators with four singular points.
\end{abstract}
\maketitle
\section{Introduction}
The study of Calabi-Yau manifolds, i.e., compact K\"ahler manifolds with trivial canonical bundle, has been an active field in algebraic geometry and mathematical physics ever since their christening by Candelas et al.~\cite{MR800347}  in 1985. For every positive integer $n$, the vanishing set of a non-singular homogeneous polynomial of degree $n+2$ in the complex projective space $\mathbb{P}^{n+1}$ is a compact Calabi-Yau manifold of $n$ complex dimensions, or Calabi-Yau $n$-fold for short.  The construction yields for $n=1$ an elliptic curve, while for $n=2$ one obtains a K3 surface. In two complex dimensions K3 surfaces are the only simply connected Calabi-Yau manifolds. The classification of Calabi-Yau threefolds remains an open problem. Results of tremendous ongoing activity in physics that included systematic computer searches have given us a better idea of the landscape of Calabi-Yau threefolds \cites{Johnson:2014aa,MR3006963}. For example, the work has impressively demonstrated the near omnipresence of elliptic fibrations on Calabi-Yau threefolds. Unfortunately, it has also been revealed that it is generally quite difficult to construct examples of families of Calabi-Yau threefolds with small Hodge number $h^{2,1}$ by specialization of multi-parameter families. 
\par The mathematical study of mirror symmetry essentially began with the example of the quintic and mirror quintic family of Candelas, de la Ossa, Green and Parkes~\cite{MR1101784}. The quintic family is a generic quintic hypersurface $X \subset \mathbb{P}^5$ , e.g., the Fermat quintic
\beqn
 X_0^5 + \dots + X_4^5 =0 \,,
\eeqn
which is a Calabi-Yau threefold with Hodge numbers $h^{1,1}=1$ and $h^{2,1} = 101$. The mirror family has a flipped Hodge diamond $h^{2,1}=1$ and $h^{1,1} = 101$ and can be constructed via the Greene-Plesser orbifolding construction from the Dwork pencil
\beqn
 X_0^{5} + \dots + X_{4}^{5} + 5 \lambda \, X_0 X_1 \cdots X_{4} = 0 \,.
\eeqn
The mirror quintic family is a one-dimensional family of Calabi-Yau threefolds defined over the base $\mathbb{P}^1 \backslash \lbrace 0, 1, \infty\rbrace$ and has exactly three singular fibers: the large complex structure limit, the Gepner point, and the stacky point (the first two describe the discriminant locus of ``bad” $N = (2, 2)$-SCFT). There are some further properties coming from the ``special geometry'' of the moduli space. The family has played a crucial role in the spectacular computations which suggested that mirror symmetry could be used to solve long-standing problems in enumerative geometry. These remarkable observations led to an enormous mathematical activity which both tried to explain the observed phenomena and to establish similar mirror recipes and results for other families of Calabi-Yau threefolds.  For example, Batyrev described in \cite{MR1269718} a way to construct the mirror of Calabi-Yau hypersurfaces in toric varieties via dual reflexive polytopes, and a proof of the so-called mirror theorem was given by Givental in \cite{MR1408320} and Lian, Liu, and Yau in \cite{MR1621573}.  However, most of the standard recipes fail to consistently produce families with the desired properties. The goal of this paper is to present a construction which rectifies that.
\par The quintic-mirror family gives rise to a variation of Hodge structure and an associated Picard-Fuchs differential  equation. In turn, the solutions to the differential equation, called periods, determine this variation of Hodge structure. Doran and Morgan \cite{MR2282973} classified certain one-parameter variations of Hodge structure which arise from families of Calabi-Yau threefolds with one-dimensional rational deformation space, i.e., families of Calabi-Yau threefolds resembling the quintic-mirror family. It is worth pointing out that their result was achieved not by constructing families of  Calabi-Yau threefolds, but by classifying all integral weight-three variations of Hodge structure which \emph{can} underlie a family of Calabi-Yau threefolds over a thrice-punctured sphere $\mathbb{P}^1 \backslash \lbrace 0, 1, \infty\rbrace$ -- subject to certain conditions on the monodromy coming from mirror symmetry. Calabi-Yau threefolds with large complex structure limit and $h^{2,1}=1$ have since emerged in a pivotal role for both mathematics and physics, where  classical geometric, toric, and analytical methods are used in their investigation. The Picard-Fuchs equations for other families of Calabi-Yau threefolds were constructed by Batyrev, van Straten and others in \cites{MR1328251, MR1619529} which lead to a general definition of a Calabi-Yau differential operator by Almkvist, van Enckevort, van Straten and Zudilin, rooted purely in the theory of differential operators; see \cites{MR3822913,Almkvist:aa}. A large number of these are known today (currently over 500!) but they are mostly found via computer searches. The reason for the name is that they are conjectured to come from families of Calabi-Yau threefolds having a large complex structure limit and $h^{2,1} = 1$.
\par This paper addresses part of the conjecture. We devise an iterative geometric construction that finds explicit families of elliptically fibered Calabi-Yau manifolds whose Picard-Fuchs operators realize a big class of Calabi-Yau differential operators, including the so-called symplectically rigid operators, but also non-rigid operators.
\section{Summary of results}
The main result of this article is an \emph{iterative twist construction} that produces projective families $\pi: X \to B=\mathbb{P}^1 \backslash \lbrace 0, 1, \infty\rbrace$ of elliptically fibered Calabi-Yau $n$-folds $X_t=\pi^{-1}(t)$ with $t \in B=\mathbb{P}^1 \backslash \lbrace 0, 1, \infty\rbrace$ with section from families of elliptic Calabi-Yau varieties with the same properties in one dimension lower, for $n=1, 2, 3, 4$. In this paper we will always restrict ourselves to elliptic fibrations with sections, so-called Jacobian elliptic fibrations. These families are then presented as Weierstrass models which are ubiquitous in the description of families of elliptic curves and K3 surfaces. In fact, Gross proved~\cite{MR1272978} that there are only a finite number of distinct topological types of elliptically fibered Calabi-Yau threefolds up to birational equivalence. For an elliptically fibered Calabi-Yau threefold, the existence of a global section makes it then possible to find an explicit presentation as a Weierstrass model~\cite{MR977771}. For example, our iterative procedure constructs from extremal families of elliptic curves\footnote{An elliptic fibration $\pi: X \to \mathbb{P}^1$ is called extremal if and only if for the group of sections we have $\operatorname{rank} \operatorname{MW}(\pi)=0$ and the associated elliptic surface has maximal Picard number.} with rational total space, families of Jacobian elliptic K3 surfaces of Picard rank $18$ or $19$, and in turn from these elliptically fibered Calabi-Yau threefolds with $h^{2,1}=1$, and can be continued further on. Moreover, all families are iteratively constructed from a \emph{single} geometric object, the mirror family of Fermat quadrics in $\mathbb{P}^1 \backslash \{1, \infty\}$ given by
\beq
\label{quadric_pencil}
 X_0^2 + X_1^2 + 2 \, t \, X_0  \, X_1= 0 \,.
\eeq
The broad range of families of Jacobian elliptic Calabi-Yau manifolds obtained by our iterative construction 
includes the following noteworthy families with generic fibers of dimension $n$:
\begin{itemize}
\item[$\lbrack n=1\rbrack$] the universal families of elliptic curves over the modular curves for $\Gamma_0(k)$ with $k=1, 2, 3, 4, 5, 6, 8, 9$,
\item[$\lbrack n=2\rbrack$]  families of $M_k$-lattice polarized K3 surfaces over the modular curves for $\Gamma_0(k)^+$ with $k=1, 2, 3, 4, 5, 6, 8, 9$,
and families of $M$-lattice polarized K3 surfaces (and closely related lattices of Picard rank 18),
\item[$\lbrack n=3\rbrack$] families of Calabi-Yau threefolds with $h^{2,1}=1$ 
realizing all 14 one-parameter variations of Hodge structure classified by Doran and Morgan in \cite{MR2282973},
\item[$\lbrack n=4\rbrack$]  families of Calabi-Yau fourfolds realizing all 14 hypergeometric one-parameter variations of Hodge structure of weight four and type 
$(1, 1, 1, 1,1)$ over a one-dimensional rational deformation space,
\item[$\lbrack n \in \mathbb{N}\rbrack$] mirror families of Dwork pencils in $\mathbb{P}^{n+1}$.
\end{itemize}
\par Katz discovered that linearly rigid monodromy tuples are obtained as tensor products and convolutions of rank-one local systems~\cite{MR1366651}; Doran and Morgan~\cite{MR2282973} classified all possible fourteen linearly rigid monodromy-tuples that could come from a B-side variation of Hodge structures. As it turns out, every single one of them admits geometric realization as hypersurfaces or complete intersections in a Gorenstein toric Fano variety or as Calabi-Yau threefolds fibered by high rank K3’s by Clingher et al.~ \cite{Clingher:2013aa}\nocite{MR2369490}. Within the class of irreducible Fuchsian differential operators of Calabi-Yau type, the symplectically rigid differential operators with three regular singularities constitute an important subclass and were classified by Bogner and Reiter~\cite{MR2980467}. In addition to the 14 linearly rigid examples in \cite{MR2282973}, this class includes all operators whose associated monodromy representation is symplectically rigid. Following the results of Deligne~\cite{MR0417174} and Katz~\cite{MR1366651}, there is a necessary and sufficient arithmetic criterion for the generalized  rigidity of a monodromy tuple within any irreducible reductive algebraic subgroup of $\operatorname{GL}(n,\mathbb{C})$: choosing the subgroup to be $\operatorname{GL}(n,\mathbb{C})$ returns the aforementioned notion of linear rigidity, whereas choosing $\operatorname{Sp}(n,\mathbb{C}) \subset \operatorname{GL}(n,\mathbb{C})$ provides us with the more general notion of symplectic rigidity. We know that the elements of monodromy tuples induced by a rank-four Calabi-Yau operator must lie in $\operatorname{Sp}(4,\mathbb{C})$. Bogner and Reiter showed that all of the symplectically rigid monodromy tuples of quasi-unipotent elements admit a decomposition into a sequence of middle convolutions and tensor products of Kummer sheaves of rank one \cite{MR2980467}.  In particular, they are constructible using only tuples of rank-one. Among them, 60 tuples are associated with symplectically rigid Calabi-Yau operators having a maximal unipotent element; they are available in the database of Almkvist et al.\!~\cite{Almkvist:aa},  or AESZ database for short. 
\par 
These results suggest that there should be a geometric explanation for the Bogner and Reiter result, and therefore some kind of ``iterated fibration'' construction, starting with the rank-one Picard-Fuchs operator of the family~(\ref{quadric_pencil}), realizing all symplectically rigid Calabi-Yau operators. The main result of this article is the following:
\begin{theorem}
\label{DoranMalmendier}
All symplectically rigid Calabi-Yau operators having a maximal unipotent element are the Picard-Fuchs operators of families of Jacobian elliptic Calabi-Yau varieties $\pi: X \to \mathbb{P}^1 \backslash \lbrace 0, 1, \infty\rbrace$.  The families are obtained by the iterative twist construction applied to the quadric pencil~(\ref{quadric_pencil}). In particular, for each symplectically rigid differential operator $L_t$ the iterative twist construction produces (1) a family of transcendental cycles $\Sigma(t)$, obtained iteratively from a lower dimensional cycle using a warped product, (2) a holomorphic top-form $\eta_t$ on each fiber $X_t=\pi^{-1}(t)$, represented as a closed differential form, such that the period
\beqn 
  \omega(t) = \int_{\Sigma(t)} \eta_t
\eeqn
is holomorphic on the unit disk about the maximal unipotent monodromy point $t=0$, and solves the Picard-Fuchs equation $L_t \omega(t) =0$.
\end{theorem}
We point out that the restriction to symplectically rigid differential operators in Theorem~\ref{DoranMalmendier} was chosen to simplify this exposition.  For example, our iterative construction also provides a geometric realization of all rank-four, non-rigid Calabi-Yau operators with four regular singular points that were found in \cite{MR3599010}. 
\par The outline of this paper is as follows: in Section~\ref{sec:operators} we recall crucial definitions and properties related to hypergeometric differential operators and Calabi-Yau operators, their behavior under the exterior square operation and the Hadamard product, as well as the notion of rigidity. The details of the proof of our main theorem are quite involved, but the basic idea is simple and already present in a series of examples that we present in Section~\ref{first_example}. In Section~\ref{prelim}, we describe the construction of twisted families with generalized functional invariant in full generality, including several modified variants needed later, and the computation of period integrals. In Section~\ref{sec:modular} we demonstrate that families of Calabi-Yau manifolds obtained by our iterative construction include universal families of elliptic curves over the modular curves for $\Gamma_0(k)$, families of $M_k$-lattice polarized K3 surfaces over the modular curves for $\Gamma_0(k)^+$, and other prominent families of lattice polarized K3 surfaces. In Section~\ref{sec:dwork} we show that a sequence of generalized functional invariants captures all key features of the mirror families of the deformed Fermat pencils, including the existence of (new) elliptic fibrations and relations among their holomorphic periods. In Section~\ref{sec:base_transformations} we apply linear and quadratic transformations to the rational parameter spaces of the twisted families of elliptic curves and K3 surfaces already obtained in previous sections. As we use our twist construction iteratively, applying base transformations between twists turns out to be a crucial step in order to construct a complete set of families realizing \emph{all} symplectically rigid monodromy tuples. The proof of Theorem~\ref{DoranMalmendier} will be completed in Section~\ref{proof}.  In Section~\ref{sec:beyond}, we show that 30 non-rigid Calabi-Yau operators with four singular points are readily obtained by our twist construction as well.
\section*{Acknowledgments}
The authors thank Matt Kerr, Rick Miranda, Dave Morrison, and Noriko Yui for helpful discussions, and the referees for many helpful comments, suggestions, and corrections. The first-named author acknowledges support from the National Sciences and Engineering Research Council, the Pacific Institute for Mathematical Sciences,  and a McCalla Professorship at the University of Alberta. The second-named author acknowledges support from the Kavli Institute for  Theoretical Physics and the University of Alberta's Faculty of Science Visiting Scholar Program.
\section{Hypergeometric and Calabi-Yau type operators}
\label{sec:operators}
\subsection{Hypergeometric functions}
The higher hypergeometric functions ${}_nF_{n-1}$ were introduced by Thomae~\cite{MR1509670} as series
\beq
\label{eqn:nFn-1}
 \hpg{n}{n-1}{\alpha_1, \quad \dots \quad , \alpha_n}{\beta_1, \dots, \beta_{n-1}}{t} = \sum_{k=0}^\infty \dfrac{(\alpha_1)_k \cdots (\alpha_n)_k}{(\beta_1)_k \cdots (\beta_{n-1})_k} \, \frac{t^k}{k!} \,,
\eeq
where $(\alpha)_k = \Gamma(\alpha+k)/\Gamma(\alpha)$ is the Pochhammer symbol. When $n=2$ this is the classical Gauss hypergeometric function.  We always assume that we have rational parameters $\alpha_1, \dots, \alpha_n \in (0,1) \cap \mathbb{Q}$ and $\beta_1, \dots, \beta_{n-1} \in (0,1] \cap \mathbb{Q}$ such that $\alpha_i \not= \beta_j$.   The rank-$n$ and degree-one\footnote{\emph{degree} refers to the highest power in $t$} differential equation satisfied by ${}_nF_{n-1}$ is given by
\beq
\label{hpg_ode}
\Big\lbrack \theta (\theta + \beta_1-1) \cdots (\theta + \beta_{n-1}-1) - t \, (\theta + \alpha_1) \cdots  (\theta + \alpha_n) \Big\rbrack \, \omega(t) = 0 \,,
\eeq
where $\theta=t \frac{d}{dt}$ and there are three regular singular points $t=0,1,\infty$. The differential operator has the Riemann symbol
\beq
\label{RiemannSymbol}
 \mathcal{P}\left. \left(\begin{array}{ccc}0 & 1 & \infty \\ \hline  1-\beta_1 & 0 & \alpha_1 \\ 1-\beta_2 & 1 & \alpha_2\\ \vdots & \vdots & \vdots\\ 1-\beta_{n-1} & n-2 & \alpha_{n-1} \\
 0 & \sum_{j=1}^{n-1} \beta_j -\sum_{j=1}^n \alpha_j & \alpha_n \end{array} \right| t \right) \,.
\eeq
From now on we shall denote the hypergeometric equation~(\ref{hpg_ode}) by 
\beq
\label{ODE}
 L^{(n)}_t\Big( ( \alpha_1, \dots , \alpha_n) ; ( \beta_1, \dots , \beta_{n-1} ) \Big) \, \omega(t) = 0\,.
\eeq
\par In general, given a rank-$n$ differential operator $L_t$ with coefficients in $\mathbb{C}(t)$ and singular locus $S$ with finite cardinality $|S| = r+1$, normalized to include $t=0$ as a singular point, there is an induced rank-$n$ local system $\mathbb{L}$ of solutions on $\mathbb{P}^1 \backslash S$. If all singularities of $L_t$ are regular, we call the operator a \emph{Fuchsian} differential operator. We fix a base point $t_0 \in \mathbb{P}^1 \backslash S$ to obtain the monodromy representation of the fundamental group given by
\beqn
  \pi_1(\mathbb{P}^1 \backslash S, t_0) \to H \subset \operatorname{GL}(\mathbb{L}_{t_0}) \cong \operatorname{GL}(n,\mathbb{C})  \,.
\eeqn
An operator is \emph{irreducible} if the image of its monodromy representation is an irreducible subgroup. If we also fix an orientation and a set of based simple loops, i.e.,
\beqn
 \Big\{ \gamma_s : \, (S^1, *) \to \big(\mathbb{P}^1 \backslash S, t_0\big) \Big\}_{s \in S} \;,
\eeqn
each circling a single point in the singular locus exactly once and an ordering of $S$, we obtain monodromy matrices $g_1, \dots, g_{r+1}$ for the simple loops $\gamma_s$ around the corresponding points together with the relation $g_1  \cdots g_{r+1}=\mathbb{I}$. The latter follows because the product of all paths is a path encircling all of $S$, whence homotopic to the trivial path. The collection of monodromy matrices $T=(g_1, \dots, g_{r+1})$ is called a \emph{monodromy tuple} of rank $n$. Clearly, $H$ is determined by the tuple of matrices $T=(g_1, \dots, g_{r+1})$ with $g_i \in \operatorname{GL}(n,\mathbb{C})$ whose product is the identity, up to global conjugation, i.e., mapping $g_i \mapsto h \cdot g_i \cdot h^{-1}$ for a \emph{single} $h \in \operatorname{GL}(n,\mathbb{C})$. We call a monodromy representation \emph{linearly rigid} if the elements of the monodromy tuple are quasi-unipotent, generate an irreducible subgroup, and are completely determined by their individual conjugacy classes, i.e., Jordan forms.
\par For the generalized hypergeometric function this is the case if $\alpha_i-\beta_j \not \in \mathbb{Z}$ for all $i, j$; see \cite{MR974906}. Therefore, we have the following:
\begin{theorem}[\cite{MR974906}]
The operator $L^{(n)}_t\big((\alpha_1, \dots, \alpha_n);(1,\dots,1)\big)$ with $\alpha_i \in (0,1)\cap \mathbb{Q}$ and $\beta_j=1$ in Equation~(\ref{hpg_ode}) is a rank-$n$ Fuchsian differential operator with the three regular singular points $t=0,1, \infty$ such that $t=0$ is a point of maximally unipotent monodromy and the monodromy representation is linearly rigid.
\end{theorem}
For rank $n=1$, we have
\beq
\label{period0}
 {}_1F_0\!\left(\left. \alpha \right| t \right)   = \big( 1- t \big)^{-\alpha}\;,
\eeq
and the differential operator $L^{(1)}_t(1/2;)$ gives rise to a monodromy tuple of rank one, with monodromies $1,-1$, and $-1$ around the points $t=0,1,\infty$. 
\par The monodromy representation for the differential operator~(\ref{ODE}) and the corresponding differential Galois group -- which carries all information about algebraic relations between the solutions -- were classified by Beukers and Heckman~\cite{MR974906}. 
\subsection{Convolution formulas}
The \emph{Hadamard product} of two power series $f(t) =\sum_{n \ge 0} f_n \, t^n$ and $g(t) =\sum_{n \ge 0} g_n \, t^n$ is defined by   $(f \star g)(t):=\sum_{n \ge 0} f_n \, g_n \, t^n$. Using the Hadamard product, the following cancellation in the coefficients of convergent hypergeometric series is easily observed:
\begin{small}  \begin{gather}
\nonumber
\hpg{n}{n-1}{\alpha_1, \quad \dots \quad , \alpha_n}{\rho_1, \dots, \rho_l, \beta_1, \dots, \beta_{n-l-1}}{t} \star  \hpg{m}{m-1}{\rho_1, \dots , \rho_l, \alpha'_1, \dots, \alpha'_{m-l}}{\gamma_1, \dots, \gamma_{m-1}}{t} \\
\label{eqn:cancellation}
 = \hpg{m+n-l}{m+n-l-1}{\alpha_1, \dots , \alpha_n, \alpha'_1, \dots, \alpha'_{m-l}}{\beta_{1}, \dots, \beta_{n-l-1}, \gamma_1, \dots, \gamma_{m-1},1}{t}
\end{gather}  \end{small}%
with $\alpha_i, \alpha'_{i'} \not= \beta_{j}, \gamma_{j'}$.  The Hadamard product is used in explicit formulas for certain integral convolutions. We have the following:
\begin{lemma}
\label{lem:convolution}
For a function $\omega(t)$ which is holomorphic on the disc of radius $1$ about $t=0$ and has the absolutely convergent series $\omega(t) = \sum_{k\ge 0} f_k t^k$ for $|t|<1$ and $\alpha \in (0,1) \cap \mathbb{Q}$, we have the following convolution formulas:
\beq
\label{eqn:pure_twist}
  \int_0^1 \dfrac{dv}{v^{1-\alpha} \, (1-v)^{\alpha}} \; \omega(tv) = \pi \csc{(\pi\alpha)} \, \sum_{n \ge 0} \frac{f_n \, \left(\alpha\right)_n}{n!} \, t^n =  \pi \csc{(\pi\alpha)} \;  {}_1F_0\!\left(\left. \alpha \right| t \right)  \star \omega(t)\,,
\eeq
and
\beq
\label{eqn:pure_twist_LowerRank}
  \int_{-1}^1 \dfrac{dv}{\sqrt{1-v^2}} \; \omega(tv)  = \pi \, \sum_{n \ge 0} \frac{f_{2n} \, \left(\frac{1}{2}\right)_n}{n!} \, t^{2n} = \pi \; {}_1F_0\!\left(\left. \frac{1}{2} \right|t^2\right)\star \omega(t)\,.
\eeq
\end{lemma}
\begin{proof}
The first identity easily follows from
\beqn
 \int_0^1 t^{a-1} (1-t)^{b-1} = \frac{\Gamma(a)\Gamma(b)}{\Gamma(a+b)}\,,
\eeqn
where $\operatorname{Re}(a), \operatorname{Re}(b)>0$, the formula $(1+z)^{-k}=\sum_{l \ge 0} \frac{\Gamma(l+k)}{\Gamma(k)\Gamma(l+1)} (-z)^l$, and the reflection formula $\Gamma(\alpha)\Gamma(1-\alpha)=\pi \csc{(\pi\alpha)}$. For the second equation we observe that
\beq
  \int_{-1}^1 \dfrac{dv}{\sqrt{1-v^2}} \; \omega(tv)  = \int_{0}^1 \dfrac{dw}{2\sqrt{w(1-w)}} \; \left( \omega(t\sqrt{w}) + \omega(-t\sqrt{w}) \right) .
\eeq
\end{proof}
The integral convolution in Equation~(\ref{eqn:pure_twist}) and~(\ref{eqn:pure_twist_LowerRank}) is also called Euler transform. That is, the Euler transform is an integral transform with parameter that relates (the holomorphic solution of) a Fuchsian differential equation of rank $n$ with three regular singularities to a Fuchsian differential equation of rank $(n+1)$ with three regular singularities. We have the following:
\begin{corollary}
In the situation above, we have
\begin{small}  \begin{gather}
\label{Euler-Integral}
 \hpg{n+1}{n}{  \alpha_1 \;  \dots \; \alpha_{n+1} }{ 1 \; \dots\; 1}{ t}
 =  {}_1F_0\!\left(\left. \alpha_{1} \right| t \right) \star \hpg{n}{n-1}{  \alpha_2 \, \dots \, \alpha_{n+1} }{ 1 \, \dots\, 1}{ t}\\[0.5em]
 \label{Euler-Integral_B}
 =  {}_1F_0\!\left(\left. \alpha_{1} \right| t \right)  \star \dots \star  {}_1F_0\!\left(\left. \alpha_{n+1} \right| t \right) \\
= \left\lbrack\prod_{i=1}^{n} \frac{1}{\pi \csc{(\pi\alpha_i)}} \int_0^1 \! \frac{dz_i}{z_i^{1-\alpha_i} (1-z_i)^{\alpha_i}}\right\rbrack (1-t \, z_1 \cdots z_{n-1})^{-\alpha_{n+1}} \,.
\end{gather}  \end{small}%
\end{corollary}
\qed
\par The Hadamard product in Equation~(\ref{Euler-Integral})  of the hypergeometric function ${}_nF_{n-1}$ with ${}_1F_0$ turns the holomorphic solution of $L^{(n)}_t((\alpha_2, \dots, \alpha_{n+1});(1,\dots,1)) \, \omega(t)=0$ into the holomorphic solution of $L^{(n+1)}_t((\alpha_1, \dots, \alpha_{n+1});(1,\dots,1)) \, \tilde{\omega}(t)=0$. There is a corresponding notion of the Hadamard product (cf.\!~\cite{MR2980467}*{Def.~4.11}) for the differential operators involved: the Hadamard product of the differential operator $L^{(1)}_t(\alpha_1;)$ with the operator $L^{(n)}_t((\alpha_2, \dots, \alpha_{n+1});(1,\dots,1))$ yields the differential operator $L^{(n+1)}_t((\alpha_1, \dots, \alpha_{n+1});(1,\dots,1))$. In~\cite{MR2980467}, this was denoted by $\mathcal{H}_{\alpha_1}(L^{(n)}_t)=L^{(n+1)}_t$, and a corresponding operation, known as \emph{middle Hadamard product}, was introduced for the monodromy tuple induced by a differential operator such that the monodromy tuple induced by $L^{(n+1)}_t$ becomes a sub-factor in the middle Hadamard product of the monodromy tuple induced by $L^{(n)}_t$. We make the following:
\begin{definition}
Differential operators are said to be of geometric origin if they are Picard-Fuchs operators annihilating the periods of a family of complex algebraic varieties. A monodromy tuple is of geometric origin, if it is induced by a differential operator of geometric origin.
\end{definition}
\par Equation~(\ref{Euler-Integral_B}) decomposes the local system of $L^{(n+1)}_t((\alpha_1, \dots, \alpha_{n+1});(1,\dots,1))$ into the convolution of $n+1$ local systems of rank one, each with a holomorphic solution of the type in Equation~(\ref{period0}) with $\alpha=\alpha_i$. This is a special case of a general classification result by Katz~\cite{MR1366651} that applies to every linearly rigid local system; see also ~\cite{MR2980467}. In fact, Katz proved that every linearly rigid local system is obtained as tensor products and convolutions of rank-one local systems associated with the holomorphic solution~(\ref{period0}).  More general, it is known that these operations on the level Fuchsian local systems and monodromy tuples preserve the geometric origin of an operator; see \cite{MR2363121}. However, as we will prove in this article, such a decomposition into rank-one local systems is not necessarily meaningful in terms of geometry. For example, the classical rank-two local system for $L^{(2)}_t((\mu, 1-\mu);(1))$ with $\mu \in \{ \frac{1}{3}, \frac{1}{4}, \frac{1}{6} \}$ is decomposed using Katz' procedure into two rank-one systems using the Hadamard product
\beq
\label{rk2hgf_b}
 \hpg21{ \mu, 1-\mu}{1}{t} =  {}_1F_0\!\left(\left. \mu \right| t\right) \star  {}_1F_0\!\left(\left. 1-\mu \right| t\right) \,.
\eeq
However, this decomposition is not the one to be used if one wants to relate a period of a zero-dimensional family of Calabi-Yau manifolds to a period of a family of elliptic curves. Instead one has to use the decomposition formula
\beq
\label{rk2hgf_bb}
 \hpg21{ \mu, 1-\mu}{1}{t} = \hpg21{ \mu, 1-\mu}{\frac{1}{2}}{t}  \star {}_1F_0\!\left(\left. \frac{1}{2} \right|t \right) \,.
\eeq
Equation~(\ref{rk2hgf_bb}) then allows to build directly families of elliptic curves whose Picard-Fuchs operator is $L^{(2)}_t((\mu, 1-\mu);(1))$ for every $\mu \in \{ \frac{1}{3}, \frac{1}{4}, \frac{1}{6} \}$ from a single geometric object, the family in Equation~(\ref{legendre0b}) with Picard-Fuchs operator $L^{(1)}_t(1/2 ;)$, using a generalized functional invariant which determines $\mu$; see Lemma~\ref{lemmaECmixed}.
\subsection{Rigid Calabi-Yau operators}
Doran and Morgan classified in \cite{MR2282973} all integral weight-three variations of Hodge structure which can underlie a family of Calabi-Yau threefolds over the thrice-punctured sphere $\mathbb{P}^1 \backslash \lbrace 0, 1, \infty\rbrace$ -- subject to conditions on monodromy coming from mirror symmetry -- through the irreducible monodromy representation generated by the local monodromies around the punctures of the base space. The monodromy representations turned out to be identical with the linearly rigid monodromy groups associated with the univariate generalized hypergeometric operators $L^{(4)}_t((\alpha_1, \dots, \alpha_4);(1, \dots, 1))$ with certain rational coefficients $\alpha_1, \dots, \alpha_4$.  
\par Each case was realized as a family of, possibly singular, Calabi-Yau threefolds constructed as hypersurfaces or complete intersections in a Gorenstein toric Fano variety and Calabi-Yau threefolds fibered by high rank K3’s by Clingher et al.~ \cite{Clingher:2013aa} -- where a non-generic geometric transition was needed in one of the cases.  The construction of the 14 cases lead to the following definition of a Calabi-Yau type differential operator (or \emph{Calabi-Yau operator} for short) by Almkvist, van Enckevort, van Straten and Zudilin:
\begin{definition}
A rank-$n$ Calabi-Yau operator is an irreducible Fuchsian differential operator $L^{(n)}_t$ of rank $n$ with coefficients in $\mathbb{C}(t)$ and singular locus $S$ (with only regular singular points), normalized to include $t=0$, such that (1) the monodromy at $t=0$ is maximally unipotent, (2) $L^{(n)}_t$ is self-adjoint, i.e., there is a function $h(t)\not =0$ algebraic over $\mathbb{Q}(t)$ such that $L^{(n)}_t h(t) = (-1)^n h(t) L^{(n) \, \dagger}_t$ where $L^{(n) \, \dagger}_t$ denotes the adjoint of $L^{(n)}_t$, and (3) $L^{(n)}_t \, \omega(t)=0$ has an $N$-integral holomorphic solution $\omega(t)=\sum_{k\ge0} f_k t^k$ at $t=0$, i.e., there exists $N\in \mathbb{N}$ such that $f_k N^k \in \mathbb{N}$ for all $k$.
\end{definition}
\begin{remark}
Condition~(1) is equivalent to all exponents of the Riemann symbol for $L^{(n)}_t$ at $t=0$ being zero. For a general  linear, rank-$n$, Fuchsian differential operator in the variable $t$, given by
\beqn
 L^{(n)}_t  = \partial^n + \sum_{i=0}^{n-1} a_i(t) \; \partial^i \;,
\eeqn
with $\partial=\frac{d}{dt}$ and suitable rational coefficient functions $a_i(t)$ for $1 \le i \le n-1$, the formal adjoint operator is
\beqn
 L^{(n) \, \dagger}_t = \partial^n + \sum_{i=0}^{n-1} (-1)^{n+i} \; \partial^i a_i(t) \,.
\eeqn
The condition of being self-adjoint implies, as a necessary condition, that the function $h(t)$ satisfies the differential equation
\beqn
 h'(t) = - \frac{2}{n} \, a_{n-1}(t) \, h(t) \,.
\eeqn
Condition~(2) implies that for $n$ even the differential Galois group of $L^{(n)}_t$ is contained in $\operatorname{Sp}(n,\mathbb{C})$. Condition~(3) implies that the monodromy matrices at each singular points are quasi-unipotent.
\end{remark}
\par For a monodromy $r$-tuple $T=(g_1, \dots, g_r)$ with $g_i \in G$ where $G$ is a reductive complex algebraic group, we define the \emph{rigidity index of $T$ in $G$} as
\beq
 i_G(T) = \sum_{i=1}^r \operatorname{codim} C_G(g_i) - 2 \, \dim{G} + 2 \, \dim{Z_G}\,,
\eeq
where $C_G(g_i)$ denotes the centralizer of $g_i$ in $G$, and $Z_G$ denotes the center of $G$. Deligne and Katz gave the following criterion:
\begin{proposition}[\cite{MR1366651}]
For $G=\operatorname{GL}(n,\mathbb{C})$ the monodromy $r$-tuple $T=(g_1, \dots, g_r)$  is linearly rigid if and only if $i_G(T) =0$.
\end{proposition}
\par Therefore, one can extend the notion of rigidity from $\operatorname{GL}(n,\mathbb{C})$ to any reductive complex algebraic group $G$ by considering the monodromy tuples with $ i_G(T) =0$. In particular, since elements of monodromy tuples induced by a fourth order Calabi-Yau differential operator lie in $\operatorname{Sp}(4,\mathbb{C})$, we investigate those Calabi-Yau operators inducing an $\operatorname{Sp}(4,\mathbb{C})$-rigid monodromy representation, or \emph{symplectically rigid} for short. 
\par Bogner and Reiter \cite{MR2980467} proved that all $\operatorname{Sp}(4,\mathbb{C})$-rigid monodromy tuples consisting of quasi-unipotent elements can be constructed using tensor products, rational pullbacks and the middle convolution \cite{MR2980467}*{Thm.~3.1} of rank-one local systems associated with the holomorphic solution of the type in Equation~(\ref{period0}).  Simpson had already classified in \cite{MR1158474} all irreducible, $\operatorname{Sp}(4,\mathbb{C})$-rigid monodromy tuples with quasi-unipotent elements \emph{and} one maximally unipotent matrix. It turns out \cite{MR1158474}*{Thm.~4} that these tuples necessarily have three matrices, analogous to Picard-Fuchs operators of families of Calabi-Yau threefolds over the thrice-punctured sphere $\mathbb{P}^1 \backslash \{0,1\infty\}$ -- subject to conditions on monodromy coming from mirror symmetry. Moreover, Simpson divided up these tuples into four families, called the \emph{hypergeometric}, \emph{odd}, \emph{even}, and \emph{extra} family. Bogner and Reiter found that these tuples are realized as monodromy tuples of Fuchsian differential operators. We have the following:
\begin{corollary}[\cite{MR2980467}]
\label{cor:BognerReiter}
There are $60$ rank-four Calabi-Yau operators with three singularities whose associated symplectically rigid monodromy representations are generated by tuples $T=(g_0, g_1, g_\infty)$ with quasi-unipotent elements, having one maximally unipotent matrix, and satisfy $i_G(T) =0$ for $G=\operatorname{Sp}(4,\mathbb{C})$.
\end{corollary}
\begin{remark}
The 60 symplectically rigid Calabi-Yau operators with three singularities are found in the AESZ database~\cite{Almkvist:aa}.  Among them, 14 Calabi-Yau operators are univariate generalized hypergeometric operators $L^{(4)}_t\big((\alpha_1, \dots, \alpha_4);(1, \dots, 1)\big)$ with certain rational coefficients $\alpha_1, \dots, \alpha_4$ determined by Doran and Morgan \cite{MR2282973}.
\end{remark}
\begin{remark}
As we demonstrate in Section~\ref{sec:beyond}, our geometric twist construction can also produce families of Calabi-Yau threefolds whose Picard-Fuchs operators realize all monodromy tuples of low degree with four quasi-unipotent elements and one maximally unipotent matrix.  On the other hand, there are families of Calabi-Yau threefolds whose Picard-Fuchs operators have no point of maximally unipotent monodromy and do not underlie variations of Hodge structure of type $(1, 1, 1, 1)$~\cite{MR2807280}.
\end{remark}
\par The decomposition of the $\operatorname{Sp}(4,\mathbb{C})$-rigid monodromy tuples by Bogner and Reiter suffers from the same problem the decomposition of linearly rigid systems by Katz does -- we demonstrated that with Equation~(\ref{rk2hgf_b}) versus Equation~(\ref{rk2hgf_bb}): whereas it does imply that the symplectically rigid operators are of geometric origin, the decomposition is not constructive in the sense that it produces families of Calabi-Yau manifolds whose Picard-Fuchs operators realize them. In contrast, our iterative twist construction in Section~\ref{prelim} will build  from a single geometric object, a family that we will present in Equation~(\ref{legendre0b}), the families of Calabi-Yau varieties using a generalized functional invariant such that their Picard-Fuchs operators realize all $60$ cases in Corollary~\ref{cor:BognerReiter}.
\subsubsection{The Yifan-Yang pullback}
As explained above, among the $60$ rank-four Calabi-Yau operators with three singularities and symplectically rigid monodromy, 14 Calabi-Yau operators belong to the univariate generalized hypergeometric operators. Another 14 rank-four Calabi-Yau operators are uniquely determined by the fact that they are of degree two in $t$ and their exterior squares are the univariate rank-five hypergeometric operators $L^{(5)}_t\big((\alpha_1, \alpha_2, 1/2, \alpha_3, \alpha_4);(1, \dots, 1)\big)$ for certain rational coefficients $\alpha_1, \dots, \alpha_4$.
\par For a linear differential operator $L^{(n)}_t$ with linearly independent solutions $y_1(t)$, $\dots$, $y_n(t)$, the \emph{exterior square} is the linear differential operator of minimal rank with solutions $y_i(t) y'_j(t) - y'_i(t) y_j(t)$ for all $1\le i < j \le n$. The exterior square of a general differential operator $L^{(4)}_t$ is a rank-six differential operator of the form $\partial^6 + \frac{1}{M(t)} \sum_{i=0}^{5} b_i(t) \, \partial^i$  where $M(t)$ is the right side of Equation~(\ref{Calabi-Yau-cond4}). If $M(t)=0$ for all $t$, then we say that the exterior square of $L^{(4)}_t$ is a rank-five operator.
\par For a general rank-four differential operator $L^{(4)}_t  = \partial^4 + \sum_{i=0}^{3} a_i(t) \, \partial^i$ we have the following:
\begin{lemma}
\label{lemma:ODE_rank4}
The following statements are equivalent:
\begin{enumerate}
\item The operator $L^{(4)}_t$ is self-adjoint.
\item The exterior square of $L^{(4)}_t$ is a rank-five operator.
\item The monodromy group of the operator $L^{(4)}_t$ is a discrete subgroup of $\mathrm{Sp}(4,\mathbb{R})$.
\item  The following condition for the coefficients of $L^{(4)}_t$ holds
\beq
\label{Calabi-Yau-cond4}
 0 = 8 \, a_1(t) - 8 \frac{da_2(t)}{dt} + 4 \frac{d^2a_3(t)}{dt^2} - 4 \, a_2(t) \, a_3(t) + 6 \, a_3(t) \, \frac{da_3(t)}{dt} + a_3(t)^3 \,.
\eeq
\end{enumerate}
\end{lemma}
\begin{proof}
The equivalence of (1), (2), (4) follows by an explicit computation. The equivalence with condition (3) was proved in \cite{MR2980467}.
\end{proof}
\par Similarly, for a general rank-five differential operator $L^{(5)}_t  = \partial^5 + \sum_{i=0}^{4} b_i(t) \, \partial^i$ we have the following:
\begin{lemma}
\label{lem:sqrtL5}
The following statements are equivalent:
\begin{enumerate}
\item The operator $L^{(5)}_t$ is self-adjoint.
\item The operator $L^{(5)}_t$ is the exterior square of a rank-four self-adjoint operator.
\item The monodromy group of the operator $L^{(5)}_t$ is a discrete subgroup of $\mathrm{Sp}(4,\mathbb{R})$.
\item  The following two conditions for the coefficients of $L^{(5)}_t$ hold:
\beq
\label{Calabi-Yau-cond}
 b_2(t) = \frac{3}{2} \, \frac{db_3(t)}{dt}+\frac{3}{5} \, b_4(t) \, b_3(t) - \frac{d^2b_4(t)}{dt^2} - \frac{6}{5} b_4(t) \, \frac{db_4(t)}{dt} - \frac{4}{25} b_4(t)^3
\eeq
and 
\beq
\label{Ext-Sqr-cond}
 \begin{split}
  b_0(t) & = \frac{1}{5} \frac{d^4b_4(t)}{dt^4} - \frac{1}{4} \frac{d^3b_3(t)}{dt^3} + \frac{2}{5} b_4(t) \, \frac{d^3b_4(t)}{dt^3} - \frac{3}{10} b_4(t) \, \frac{d^2b_3(t)}{dt^2}\\
  & + \left( \frac{8}{25} b_4(t)^2 + \frac{4}{5} \frac{db_4(t)}{dt} - \frac{1}{10} b_3(t) \right) \, \frac{d^2b_4(t)}{dt^2} + \frac{1}{2} \frac{db_1(t)}{dt} \\
  & + \left( -\frac{3}{25} b_4(t)^2 - \frac{3}{10} \frac{db_4(t)}{dt} \right) \frac{db_3(t)}{dt} + \frac{12}{25} b_4(t) \, \left(\frac{db_4(t)}{dt}\right)^2\\
  & + \left( - \frac{3}{25} \, b_3(t) \, b_4(t) + \frac{16}{125} \, b_4(t)^3 \right) \, \frac{db_4(t)}{dt} - \frac{2}{125} b_3(t) \, b_4(t)^3 \\
  & + \frac{1}{5} \, b_1(t) \, b_4(t) + \frac{16}{3125} \, b_4(t)^5 \,.
 \end{split}
\eeq
\end{enumerate}
\end{lemma}
\begin{proof}
The equivalence of (1), (2), (4) follows by an explicit computation. Yang and Zudilin \cite{MR2731075} proved that $L^{(5)}_t$ has  a projective monodromy group that is a discrete subgroup of $\mathrm{Sp}(4,\mathbb{R})$ if and only if $L^{(5)}_t$ satisfies conditions  (\ref{Calabi-Yau-cond}) and (\ref{Ext-Sqr-cond}).  In fact, we have the following identification with the polynomials $p_1$, $p_2$, and $p_3$ used in \cite{MR2731075}*{Theorem 4}:
\beq
\begin{split}
 \frac{p_1(t)}{t} & = \frac{1}{10}  b_4(t) - \frac{1}{t} \;,\\
 \frac{p_2(t)}{t^2} & = \frac{1}{5} b_3(t) - \frac{1}{5} \frac{db_4(t)}{dt} - \frac{7}{100} b_4(t)^2 \;,\\
 \frac{p_3(t)}{t^4} & = \frac{1}{250} b_3(t) \, b_4(t)^2 - \frac{3}{10} b_4(t) \, \frac{db_3(t)}{dt} + \frac{1}{50} b_3(t) \, \frac{db_4(t)}{dt} 
 + \frac{17}{50} \left( \frac{db_4(t)}{dt} \right)^2\\ 
 & + \frac{2}{5} \frac{d^3b_4(t)}{dt^3} + \frac{29}{125} b_4(t)^2 \frac{db_4(t)}{dt} + \frac{14}{25} b_4(t) \, \frac{d^2 b_4(t)}{dt^2}+ \frac{9}{1250} b_4(t)^4 \\
 & - \frac{2}{25} b_3(t)^2 + \frac{1}{2} b_1(t) - \frac{9}{20} \frac{d^2b_3(t)}{dt^2}\,.
\end{split}
\eeq
\end{proof}
We make the following:
\begin{remark}
The D-module associated with the differential operator $L^{(4)}_t$ underlies a variation of Hodge Structure $\mathcal{V}$ of rank four and weight three, corresponding to the standard representation of its Mumford-Tate group $\mathrm{Sp}(4,\mathbb{R})$. Since the exterior square of $L^{(4)}_t$ decomposes into a product of rank-five and rank-one operator, the exterior square of the D-module decomposes into a five-dimensional irreducible and a one-dimensional irreducible representation, and $\wedge^2 \mathcal{V}$ decomposes accordingly. The five-dimensional sub-factor has weight six but level four, so the Tate twist $\wedge^2 \mathcal{V}(1)$ has weight four and type $(1, 1, 1, 1, 1)$.
\end{remark}
\par We obtain the following:
\begin{corollary}
\label{cor:YYsqrt}
For a self-adjoint rank-five operator $L^{(5)}_t  = \partial^5 + \sum_{i=0}^{4} b_i(t) \, \partial^i$, a rank-four self-adjoint 
differential operator $L^{(4)}_t$ whose exterior square equals $L^{(5)}_t$  is given by
\beq
\begin{split}
a_{{3}} \left( t \right) & =\frac{2}{5}\,b_{{4}} \left( t \right) \;,\\
a_{{2}} \left( t \right) & =-{\frac {7\, b_{{4}} \left( t
 \right)^{2}}{50}}-\frac{2}{5}\,{\frac d{dt}}b_{{4}}
 \left( t \right) +\frac{1}{2}\,b_{{3}} \left( t \right) \;,\\
 a_{{1}} \left( t \right) & =-{\frac {9\, b_{{4}} \left( t
 \right)^{3}}{250}}-{\frac {12\,b_{{4}} \left( t \right) {
\frac d{dt}}b_{{4}} \left( t \right) }{25}}+\frac{1}{10}\,b_{{4}}
 \left( t \right) b_{{3}} \left( t \right) -\frac{3}{5}\,{\frac {d^{2}}{
d{t}^{2}}}b_{{4}} \left( t \right) +\frac{1}{2}\,{\frac d{dt
}}b_{{3}} \left( t \right),\\
a_{{0}} \left( t \right) & =-\frac{2}{5}\,{\frac {d^{3}}{d{t}^{3}}}b
_{{4}} \left( t \right) +\frac{3}{8}\,{\frac {d^{2}}{d{t}^{2}}}b_{
{3}} \left( t \right) -{\frac {23\,b_{{4}} \left( t \right) {\frac {
d^{2}}{d{t}^{2}}}b_{{4}} \left( t \right) }{50}}
 +\frac{1}{5}\,b_{{4}} \left( t \right) {\frac d{dt}}b_{{3}} \left( t
 \right) \\
 & -{\frac {27\, \left( {\frac d{dt}}b_{{4}} \left( 
t \right)  \right) ^{2}}{100}}+ \left( -{\frac {18\, b_{{4}}
 \left( t \right)^{2}}{125}}-\frac{1}{20}\,b_{{3}} \left( t \right) 
 \right) {\frac d{dt}}b_{{4}} \left( t \right) -{\frac {19
\, b_{{4}} \left( t \right) ^{4}}{10000}}\\
&-{\frac {3\,
 b_{{4}} \left( t \right)^{2}b_{{3}} \left( t
 \right) }{200}}+\frac{1}{16}\, b_{{3}} \left( t \right)^{2}-
\frac{1}{4} \,b_{{1}} \left( t \right) \,.
\end{split}
\eeq
\end{corollary}
\begin{proof}
The proof follows from an explicit computation using Lemmas~\ref{lemma:ODE_rank4} and~\ref{lem:sqrtL5}.
\end{proof}
\par For a self-adjoint rank-five operator $L^{(5)}_t$ satisfying conditions (\ref{Calabi-Yau-cond}) and (\ref{Ext-Sqr-cond}), we denote the rank-four, self-adjoint, linear differential operator $L^{(4)}_t$ in Corollary~\ref{cor:YYsqrt} by $L^{(4)}_t = \vee_2 L^{(5)}_t$. Following \cite{math/0612215}, the latter is also called the \emph{Yifan-Yang pullback} of $L^{(5)}_t$. The following is easy to check:
\begin{lemma}
\label{lem:scalingL}
The rank-five operator $L^{(5)}_t$ satisfies conditions~(\ref{Calabi-Yau-cond}) and (\ref{Ext-Sqr-cond}) if and only if for any algebraic function $g(t)$ the operator 
\beqn
 L^{(5), \, \langle 2  g(t)\rangle}_t := e^{2 \, g(t)} \, L^{(5)}_t  \,e^{-2 \, g(t)}
 \eeqn
 does. Moreover, the operator $\vee_2 L^{(5), \, \langle 2  g(t)\rangle}_t$ coincides with  $L^{(4), \, \langle g(t)\rangle}_t := e^{g(t)} \, L^{(4)}_t  \,e^{-g(t)}$.
\end{lemma}
\begin{proof}
The proof follows by an explicit computation.
\end{proof}
\par We then have the following:
\begin{proposition}
\label{YY5operators} 
The hypergeometric operator $L^{(5)}_t\big( (\alpha_1, \dots, \alpha_5) ; \big(1,\dots,1)\big)$ with $\alpha_1=1-\alpha_5$, $\alpha_2=1-\alpha_4$, $\alpha_3=\frac{1}{2}$, and $\alpha_1=p, \alpha_2=q$, $p, q \in (0,1) \cap \mathbb{Q}$ is a rank-five, self-adjoint differential operator whose Yifan-Yang pullback $L^{(4), \, \langle g(t)\rangle}_t$ is given by 
\beq
\label{opYY1}
\begin{split}
& \theta^{4} 
-  \frac{1}{4} \, t\, \Big( 8 \, \theta^4 + 16 \, \theta^3 - 2 \, ({p}^{2}+q^2-p-q-9) \, \theta^{2}  \\
   & \quad  - 2\, (p^{2}+q^2-p-q-5)\, \theta +  
    2  +p+ q - p q - {p}^{2}- q^2 +p^2 q +p\,{q}^{2} +{p}^{2}{q}^{2} \Big)\\
+ & \frac{1}{16}\,t^2 \, \left( 2\,\theta+2+p-q \right)  \left( 2\,\theta+1+
p+q \right)  \left( 2\,\theta+2-p+q \right) 
 \left( 2\,\theta+3-p-q \right) \,,
 \end{split}
 \eeq
 with $\exp{(-g(t))} = \sqrt[4]{ t^2 \, (t-1)^3}$. In particular, $\exp{(-g(t))} = \sqrt[4]{ t^2 \, (t-1)^3}$ is the unique non-trivial function (up to scaling) that minimizes the degree (in $t$) of $L^{(4), \, \langle g(t)\rangle}_t$.
\end{proposition}
\begin{proof}
Using Lemma~\ref{lem:sqrtL5} and~\ref{lem:scalingL}, the proof follows from an explicit computation.
\end{proof}
\section{First examples from quadratic twists}
\label{first_example}
The details of the proof of Theorem~\ref{DoranMalmendier} are quite involved, but the basic idea is simple and present in the following series of examples for our iterative construction: One starts with a family of pairs of points and produces, by a quadratic twist, a family of elliptic curves whose total space is an extremal rational surface. One continues by constructing a family of Jacobian elliptic K3 surfaces of Picard rank $19$, and in turn, a family of  Calabi-Yau threefolds with $h^{2,1}=1$ from the family of Jacobian elliptic K3 surfaces by two more quadratic twists. If one allows for the Picard rank of the K3 surfaces in the intermediate step to drop from $19$ to $18$, one can also construct a second, closely related family of Calabi-Yau threefolds with $h^{2,1}=1$. The Picard-Fuchs operators for the two families of threefolds realize two simple rank-four and degree-one symplectically rigid Calabi-Yau operators in the AESZ database~\cite{Almkvist:aa}. 
\par The construction of these examples was motivated by physics, in particular the embedding of gauge theory into F-theory~\cites{MR2003960, MR2854198, MR3366121}. An interpretation from the point of view of variations of Hodge structure might be provided by methods in \cites{MR2457736, MR2601630,MR3484368}. The idea of using a quadratic twist to construct an isomorphism between different types of moduli problems also appeared in the work of Besser and Livn\'e in \cite{MR3156417}.
\subsection{A sequence of quadratic twists}
\label{FirstExample}
We start with a pencil of `dimension zero' Calabi-Yau manifolds which consists of the ramified family of pairs of points $\pm y_0$ given by
\beq
\label{legendre0}
 y_0^2 = 1 - t 
\eeq
for $t \in \mathbb{C}$. For this family, we define $\Sigma_0(t)$ to be the point $t$, take the branch cut  branch cut along $\lbrace \, t \, | \, 1\le t \le \infty\rbrace$, and consider the holomorphic $0$-form $1/y_0$ and the period
\beq
\label{period00}
 \omega(t) = \int_{\Sigma_0(t)} \frac{1}{y_0} = \frac{2}{y_0} =   2 \; {}_1F_0\!\left(\left. \frac{1}{2} \right| t \right)  =2\, \big( 1- t \big)^{-\frac{1}{2}}\;,
\eeq
which is a solution of the hypergeometric differential equation $L^{(1)}_t(\frac{1}{2};) \, \omega(t)=0$.
\par To obtain from the family of points~(\ref{legendre0}) a pencil of elliptic curves, one promotes the family parameter $t$ to an additional  complex variable $x$ and carries out the quadratic twist $y_0^2 \mapsto -y_1^2/[x(x-t)]$ in Equation~(\ref{legendre0}). This yields the classical Legendre pencil of elliptic curves given by
\beq
\label{legendre1}
 y_1^2 = x \, (x-1) \, (x-t) \;,
\eeq
where $t \in \mathbb{P}^1 \backslash \lbrace 0,1, \infty\rbrace$ and $\pi: X_t \to \mathbb{P}^1$ is the corresponding projection.  The polarized Hodge filtration on $H_\mathbb{Z} =H^1(X_t,\mathbb{Z})$  of the elliptic curve $X_t$ has two steps, $F^0$ and $F^1$ defining a pure Hodge structure of weight one and type $(1,1)$ in
\beqn
 \left \lbrace H_\mathbb{Z}, Q, F^1 \subset F^0 = H_\mathbb{Z} \otimes \mathbb{C} \right \rbrace \,.
\eeqn
Here,  $F^0$ is the entire cohomology group, and $F^1$ is $H^{1,0}(X_t)$, the one-dimensional space of holomorphic harmonic one-forms. The polarization $Q$ is the natural non-degenerate, integer, bilinear form on $H_\mathbb{Z}$ derived from the cup product and varies holomorphically. The homology group of the elliptic curve is free of rank two, and the periods of $dx/y_1$ satisfy a second-order differential  equation. In fact, Equation~(\ref{legendre1}) defines a double covering of $\mathbb{P}^1$ branched at the four points $x=0,1,t,\infty$. We cut the Riemann sphere from $0$ to $1$ and from $t$ to $\infty$. The two cuts are opened up into two ovals, and the two $y$-sheets are glued with opposite orientations to obtain an elliptic curve. The A-cycle $\Sigma_1(t)$ projects onto the closed cycle encircling the branching points at $x=0$ and $x=t$. Then, flattening out the cycle, we obtain
\beq
\label{period1}
\omega(t) = \oint_{\Sigma_1(t)} \dfrac{dx}{y_1} =  2 \int_0^t  \dfrac{dx}{\sqrt{x\, (x-1) \, (x-t)}}  
= 2 \int_0^1  \dfrac{dx}{\sqrt{x\, (1-x) \, (1-t x)}} \,,
\eeq
that is, the Euler integral representation of $(2\pi)\, {}_2F_1( \frac{1}{2}, \frac{1}{2}; 1| t)$ and satisfies the differential equation $L^{(2)}_t((\frac{1}{2}, \frac{1}{2}); (1)) \, \omega(t)=0$. Alternatively, we can take a Pochhammer contour $C_{\lbrace 0,1 \rbrace}$ around $x=0$ and $x=1$ to obtain
\beqn
\label{period1b}
  \oint_{C_{\lbrace 0,1 \rbrace}}  \dfrac{dx}{\sqrt{x\, (1-x) \, (1-t x)}}  = \pi \sum_{n \ge 0} \frac{\left(\frac{1}{2}\right)_n t^n}{n!}
  \oint_{C_{\lbrace 0,1 \rbrace}}  dx \; x^{n-\frac{1}{2}} (1-x)^{-\frac{1}{2}} = 4\pi^2 \hpg21{ \frac{1}{2}, \frac{1}{2}}{1}{t} \,,
\eeqn 
where we used Equation~(\ref{eqn:beta_function}).
\par A fundamental observation is the following: the quadratic twist has turned the zero-dimensional family~(\ref{legendre0}) into the family of elliptic curves~(\ref{legendre1}); the holomorphic period for the family of elliptic curves is the Hadamard product of the function ${}_1F_0$ -- which accounts for the quadratic twist -- and the holomorphic period~(\ref{period0}) of the zero-dimensional family. That is, for $|t|<1$ we obtain 
\beq
\label{eqn:intro_EC}
 \oint_{\Sigma_1(t)} \dfrac{dx}{y_1} = 2\pi \, {}_1F_0\!\left(\left. \frac{1}{2} \right| t\right) \star  {}_1F_0\!\left(\left. \frac{1}{2} \right| t\right) =  2\pi \,  \hpg21{ \frac{1}{2}, \frac{1}{2}}{1}{t} \,,
\eeq
where we used Equation~(\ref{eqn:cancellation}).
\par To obtain from the family of elliptic curves in Equation~(\ref{legendre1}) a family of K3 surfaces, one again promotes the parameter $t$ to an additional complex variable $u$ and carries out the quadratic twist $y_1^2 \mapsto y_2^2/[u(u-t)]$ in Equation~(\ref{legendre1}). This yields the twisted Legendre pencil given by
\beq
\label{legendre2}
 y_2^2 = x \, (x-1) \, (x-u) \, u \, (u-t)\,.
\eeq
Equation~(\ref{legendre2b}) defines the N\'eron model for a family of elliptically fibered K3 surfaces $X_t$ of Picard rank $19$  with section over $\mathbb{P}^1\ni [u:1]$.  Hoyt~\cite{MR732965} and Endo~\cite{MR2634350} extended arguments of Shimura and Eichler \cite{MR0089928} to show that on the parabolic cohomology group $H_\mathbb{Z} \cong \mathbb{Z}^3$ associated with $\pi: X_t \to \mathbb{P}^1$  there is a natural polarized Hodge filtration given by
\beqn
 \left \lbrace H_\mathbb{Z}, Q, F^2 \subset F^1 \subset H_\mathbb{Z} \otimes \mathbb{C} \right \rbrace \,.
\eeqn
Here, $H_\mathbb{Z} \otimes \mathbb{C}$ consists of cohomology classes spanned by suitable meromorphic differentials of the second kind, and $Q$ is a non degenerate $\mathbb{Q}$-valued bilinear form determined by period relations of the holomorphic two-form $du\wedge dx/y_2$ \cite{MR894512}. The period point lies on the cone $Q=0$. In the situation of Equation~(\ref{legendre2}), it follows $Q=2z_1^2+2z_2^2-2z_3^2$, the local system $R^2\pi_*\mathbb{C}_X$ of middle cohomology is irreducible, and the cohomology group $H_\mathbb{Z}$ carries a pure Hodge structure of weight two and type $(1,1,1)$. 
\par A basis of transcendental cycles is constructed from cycles in the elliptic fiber and carefully chosen curves in the base connecting the cusps $0, 1, t, \infty$~\cite{MR732965}.  As in Shimura~\cite{MR0120372}, for continuously varying families of closed one-cycles $\Sigma_1(u), \check{\Sigma}_1(u)$, that form bases of the first homology of the fiber, the expression
\beq
\label{secondary}
\int_{t}^u du \, \left( \begin{array}{c} \int_{\Sigma_1(u)} \frac{dx}{y_2} \\[0.5em]  \int_{\check{\Sigma}_1(u)} \frac{dx}{y_2} \end{array} \right)
\eeq
defines a vector-valued holomorphic function that converges as $u$ approaches the cusps at $u=0, 1, \infty$. It was shown by Cox and Zucker~\cite{MR538682} that the components of~(\ref{secondary}) are $\mathbb{Q}$-linear combinations of periods of associated meromorphic two-forms on $X$ that are of the second kind and holomorphic on singular fibers; they represent generalized cusp forms of weight three associated with Equation~(\ref{legendre2}). In particular, periods of the holomorphic two-form $du \wedge dx/y_2$ satisfy the third-order differential equation $L^{(3)}_t((\frac{1}{2}, \frac{1}{2}, \frac{1}{2}); (1, 1))\,\omega(t)=0$. Since the singular fiber over $u=0$ and $u=t$ is of Kodaira-type $I^*_2$ and $I_0^*$, respectively -- the latter having monodromy $-\mathbb{I}$ independent of the chosen homological invariant -- there is a unique A-cycle that is transformed into itself as a path encircles the cusps at $u=0$ or $u=t$. One obtains a transcendental two-cycle $\Sigma_2(t)$ on $X_t$ by tracing out this A-cycle $\Sigma_1(u)$ in the fiber over the line segment between the cusps at $u=0$ and $u=t$ in the base.  Then, one integrates the holomorphic two-form $du \wedge dx/y_2$ over the two-cycle $\Sigma_2(t)$  to obtain
\beq
\label{period2}
 \oiint_{\Sigma_2(t)} du \wedge \dfrac{dx}{y_2} =  2\pi \int_0^t \dfrac{du}{\sqrt{u \, (u-t)}} \; \hpg21{ \frac{1}{2}, \frac{1}{2}}{1}{u} 
 = 2i \pi^2  \hpg32{ \frac{1}{2}, \frac{1}{2}, \frac{1}{2}}{1, 1}{t} \,,
 \eeq 
where we used Equation~(\ref{eqn:cancellation}) and Lemma~\ref{lem:convolution}.
\par To obtain from the family of K3 surfaces~(\ref{legendre2}) a family of Calabi-Yau threefolds, one promotes the parameter $t$ to an additional complex variable $v$ and carries out yet another quadratic twist $y_2^2 \mapsto y_3^2/[v(v-t)]$ in Equation~(\ref{legendre2}). One obtains the family
\beq
\label{legendre3}
 y_3^2 = x \, (x-1) \, (x-u) \, u \, (u-v) \, v \, (v-t)\,.
 \eeq
This family constitutes a pencil of elliptically fibered Calabi-Yau threefolds, denoted by $\pi: X_t \to \mathbb{P}^1$.  Each member $X_t$ of the family is fibered by K3 surfaces of Picard rank $19$ over $\mathbb{P}^1$ with affine coordinate $u$. As a consequence, the local system $R^3\pi_* \mathbb{C}_X$ of middle cohomology is irreducible and  the transcendental cohomology group $H_\mathbb{Z}$ carries a pure Hodge structure of weight three and type $(1, 1, 1, 1)$.
\par The natural Hodge structure on the parabolic cohomology group of $X_t$ can be described in terms of periods of the holomorphic three-form $dv\wedge du \wedge dx/y_3$. A transcendental three-cycle $\Sigma_3(t)$ on each threefold $X_t$ is obtained as Lefschetz thimble by tracing out the two-cycle $\Sigma_2(v)$ in the K3 fiber over the line segment between the cusps $v=0$ and $v=t$. If one integrates the holomorphic three-form $dv \wedge du \wedge dx/y_2$ over the cycle $\Sigma_3(t)$, one obtains for the holomorphic period
\beq
\label{period3}
  \oiiint{\Sigma_3(t)} dv \wedge du \wedge \dfrac{dx}{y_3} 
= -2\pi^3   \hpg43{ \frac{1}{2}, \frac{1}{2}, \frac{1}{2}, \frac{1}{2}}{1, 1, 1}{t}\,,
\eeq
where we have used Equation~(\ref{eqn:cancellation}) and Lemma~\ref{lem:convolution}. The period is annihilated by the rank-four and degree-one Picard-Fuchs operator
\beq
\label{ExampleOperator1}
 L^{(4)}_t\left(\Big(\frac{1}{2}, \frac{1}{2}, \frac{1}{2}, \frac{1}{2}\Big); (1, 1, 1)\right) = \theta^4 - t \, \left(\theta+\frac{1}{2}\right)^4\,.
\eeq
The Picard-Fuchs operator~(\ref{ExampleOperator1}) is one of the 14 original Calabi-Yau operators mentioned in the introduction and was labelled ``$(3)$" in the AESZ database~\cite{Almkvist:aa}. We make the following:
 \begin{remark}
For the construction of the cycles $\Sigma_n$ we employed two different strategies, namely the use of either Pochhammer cycles or Lefschetz thimbles. For $n=1$, we used a family of A-cycles equivalent to a Pochhammer contour. For $n=2$ and $n=3$, we used a Lefschetz thimbles to form transcendental cycles with a non-trivial $(n-1)$-cycle in the elliptic or K3 fiber over a line segment between cusps. The reason can be traced back to the equivalent ways of defining Euler's beta function. The beta function is $\mathrm{B}(\alpha,\beta)=\int_0^1 t^{\alpha-1} (t-1)^{\beta-1} dt$ for $\operatorname{Re}(x), \operatorname{Re}(y) >0$. The beta function is then analytically continued for all values of $\alpha$ and $\beta$. This is achieved by converting the Euler integral into an integral over a Pochhammer contour $C_{\lbrace 0,1 \rbrace}$ around $t=0$ and $t=1$ to obtain
 \beq
 \label{eqn:beta_function}
  \big(1-e^{2\pi i \alpha}\big) \big(1-e^{2\pi i \beta}\big) \, \mathrm{B}(\alpha,\beta) = \oint_{C_{\lbrace 0,1 \rbrace}} t^{\alpha-1} (t-1)^{\beta-1} dt \,.
 \eeq
In the context of our iterative construction (see Section~\ref{prelim}) it turns out that contour integration is easier to describe in the general setting.
\end{remark}
\subsection{Closely related examples} 
\label{second}
To obtain from the family of elliptic curves in Equation~(\ref{legendre1}) a family of K3 surfaces of Picard rank 18 instead of Picard rank 19, one again promotes the family parameter $t$ to an additional complex variable $u$, but carries out the quadratic twist $y_1^2 \mapsto y_2^2/[u^2-t^2]$ in Equation~(\ref{legendre1}). One obtains a twisted Legendre pencil given by
\beq
\label{legendre2b}
 y_2^2 = x \, (x-1) \, (x-u) \,  (u^2-t^2)\,.
\eeq
The equation defines the Weierstrass model for a family of elliptically fibered K3 surfaces $X_t$ of Picard rank $18$ with section over $\mathbb{P}^1$,  denoted by $\pi: X_t \to \mathbb{P}^1$.  We define a transcendental two-cycle $\hat{\Sigma}_2(t)$ by by tracing out the A-cycle $\Sigma_1(u)$ in the fiber over the line segment between the cusps at $u=-t$ and $u=t$ (avoiding $u=0$ by using a small arc). Using Lemma~\ref{lem:convolution} we obtain for the period integral
\beq
\label{period2bb}
 \oiint_{\hat{\Sigma}_2(t)} du \wedge \dfrac{dx}{y_2} =  2\pi \int_{-t}^t \dfrac{du}{\sqrt{u^2-t^2}} \, \hpg21{ \frac{1}{2}, \frac{1}{2}}{1}{u}  =   2i\pi^2 {}_1F_0\!\left(\left. \frac{1}{2} \right| t^2\right) \star \hpg21{ \frac{1}{2}, \frac{1}{2}}{1}{t} .
 \eeq 
The quadratic twist has turned Equation~(\ref{legendre1}) into (\ref{legendre2b}) and, similarly, the holomorphic period for the family of K3 surfaces is the Hadamard product of the function ${}_1F_0(t^2)$ -- which accounts for the modified quadratic twist -- and the holomorphic period~(\ref{eqn:intro_EC}). The Hadamard product can be evaluated explicitly and yields
\beqn
 {}_1F_0\!\left(\left. \frac{1}{2} \right| t^2 \right) \star \hpg21{ \frac{1}{2}, \frac{1}{2}}{1}{t}   =   \hpg43{ \frac{1}{4}, \frac{1}{4}, \frac{3}{4},  \frac{3}{4}}{1, 1, \frac{1}{2}}{t^2}  \,.
 \eeqn
The period~(\ref{period2bb}) annihilated by the rank-four and degree-two Picard-Fuchs operator
\beqn
 {\theta}^{3} \left( \theta-1 \right) - \frac{t^2}{16}\, \left( 2\,\theta+3 \right) ^{2} \left( 1+2\,\theta \right) ^{2} \,.
\eeqn
\par To obtain from the family of K3 surfaces in Equation~(\ref{legendre2b}) a family of Calabi-Yau threefolds, one promotes the family parameter $t$ to an additional complex variable $v$ and carries out a quadratic twist $y_2^2 \mapsto y_3^2/[v^2-t^2]$. One obtains the family
\beq
\label{legendre3b}
 y_3^2 = x \, (x-1) \, (x-u) \, (u^2-v^2) \, (v^2-t^2)\,,
 \eeq
that constitutes a non-trivial pencil of elliptically fibered Calabi-Yau threefolds, denoted by $\pi: X_t \to \mathbb{P}^1$. As before, the natural Hodge structure on the parabolic cohomology group of $X_t$ can be described in terms of periods  and period relations of the holomorphic three-form $dv\wedge du \wedge dx/y_3$. We construct a transcendental three-cycle $\hat{\Sigma}_3(t)$ on $X_t$ as Lefschetz thimble by tracing out the cycle $\hat{\Sigma}_2(v)$ in the K3 fiber over the line segment between the cusps $v=-t$ and $v=t$ (avoiding $v=0$ by using a portion of a small circle). If one integrates the holomorphic three-form $dv \wedge du \wedge dx/y_2$ over the cycle $\hat{\Sigma}_3(t)$, one obtains for the holomorphic period
\beq
\label{period3b}
 \oiiint{\hat{\Sigma}_3(t)} dv \wedge du \wedge \dfrac{dx}{y_3} = -2\pi^3  \hpg43{ \frac{1}{4}, \frac{1}{4}, \frac{3}{4},  \frac{3}{4}}{1, 1, 1}{t^2}  \,,
\eeq
where we have applied the cancellation formula~(\ref{eqn:cancellation}) to conclude
\beqn
{}_1F_0\left(\left. \frac{1}{2} \right| t^2\right) \star   \hpg43{ \frac{1}{4}, \frac{1}{4}, \frac{3}{4},  \frac{3}{4}}{1, 1, \frac{1}{2}}{t^2} =   \hpg43{ \frac{1}{4}, \frac{1}{4}, \frac{3}{4},  \frac{3}{4}}{1, 1, 1}{t^2} \,.
\eeqn
 The period~(\ref{period3b}) is annihilated by the fourth-order and degree-two Picard-Fuchs operator (rescaled by $2^4$)
\beq
\label{ExampleOperator2}
  L^{(4)}_{t^2}\left(\Big(\frac{1}{4}, \frac{1}{4}, \frac{3}{4}, \frac{3}{4}\Big); (1, 1, 1)\right)
  =   \theta^{4} - \frac{t^2}{16}\, \left( 2\,\theta+1 \right)^{2} \left( 2\,\theta+ 3 \right) ^{2} \,.
 \eeq
 The Picard-Fuchs operator~(\ref{ExampleOperator2}) is another of the 14 original Calabi-Yau operators mentioned in the introduction and was labelled ``$(10)$" in the AESZ database~\cite{Almkvist:aa}.  
\section{The twist construction}\label{prelim}
In this section we describe the construction of twisted families with generalized functional invariant in full generality, including several modified variants needed later, and the computation of period integrals. 
\subsection{Elliptic fibrations}
 In this section we will recall facts  about the geometry of elliptically fibered varieties; we refer to the papers \cites{1307.7997,MR977771,MR1242006} for details. We also adopt the definition of terminal, canonical, and log-terminal singularity of a projective variety from aforementioned articles. 
\par We define an elliptic fibration $\pi: X \to S$ to be a proper surjective morphism with connected fibers between normal  complex varieties $X$ and $S$ whose general fibers are nonsingular elliptic curves. We assume that $\pi$ is smooth over an open subset $S^0$ whose complement is a divisor with only normal crossings. Then, the local system $H_0^i:=R^i\pi_*\mathbb{Z}_{X}\mid_{S^0}$ forms a variation of Hodge structure over $S^0$. There is a canonical bundle formula for the elliptic fibration:  with the fundamental line bundle denoted by $\mathcal{L}:=\left(R^1\pi_*\mathcal{O}_X\right)^{-1}$, the canonical bundles $\pmb{\omega}_X:=\wedge^{\mathrm{top}}T^*X$ and $\pmb{\omega}_S:=\wedge^{\mathrm{top}}T^*S$ are related by
\beq
\label{EquationDualizingSheaf}
 \pmb{\omega}_X \cong \pi^*\Big( \pmb{\omega}_S \otimes \mathcal{L}\Big) \otimes \mathcal{O}_X(D)\,,
\eeq
where $D$ is a certain effective divisor on $X$ that only depends on divisors of $S$ over which $\pi$  has multiple fibers and divisors of $X$ that give $(-1)$-curves in the fibers of $\pi$.  The existence of a section for the elliptic fibration $\pi: X \to S$ prevents the presence of multiple fibers. And the presence of $(-1)$-curves in the fibers is avoided by imposing a minimality criterion. In the case of an elliptic surface, we assume that the fibration is relatively minimal, i.e., that there are no $(-1)$-curves in the fibers of $\pi$. In the case of an elliptic threefold, we assume that no contraction of a surface in $X$ is compatible with the fibration. $X$ is a Calabi-Yau manifold if $h^i(X, \mathcal{O}_X) =0$ for $0 < i < \dim{X}$ and $\pmb{\omega}_X \cong \mathcal{O}_X$. It is known \cite{MR1109635} that for any elliptic fibration on a Calabi-Yau threefold, the base surface can have at worst log-terminal orbifold singularities. In this article we will take the base surface $S$ always to be a blow-up of $\mathbb{P}^2$ or a Hirzebruch surface $\mathbb{F}_k$.
\par It is a well-known that for any elliptic fibration $\pi: X \to S$ with section $\sigma:S \to X$, there always exists a Weierstrass model $W$ over $S$, i.e., there is a complex variety $W$ and  a proper flat surjective morphism  $p: W \to S$ with canonical section whose fibers are irreducible cubic curves in $\mathbb{P}^2$ together with a birational map from $X$ to $W$  that maps $\sigma$ to the canonical section of the Weierstrass model.  The map from $X$ to $W$ blows down all components of fibers which do not intersect $\sigma(S)$. It is also known that for a relatively minimal elliptic fibration with section, the morphism on the Weierstrass model is in fact a resolution of the singularities of $W$. 
\subsection{Weierstrass models}
Let $\mathcal{L}$ be a line bundle on $S$, and $g_2$ and $g_3$ sections of $\mathcal{L}^{4}$ and $\mathcal{L}^{6}$, respectively, such that the discriminant $\Delta=g_2^3-27 \, g_3^2$ is a section of $\mathcal{L}^{12}$ not identically zero. Define $\mathbf{P} :=\mathbb{P}( \mathcal{O}_S \oplus \mathcal{L}^2 \oplus \mathcal{L}^3)$ and let $p:\mathbf{P} \to S$ be the natural projection and $\mathcal{O}_{\mathbf{P}}(1)$ be the tautological line bundle. We denote by $x$, $y$, and $z$ the sections of $\mathcal{O}_{\mathbf{P}}(1) \otimes \mathcal{L}^{2}$,  $\mathcal{O}_{\mathbf{P}}(1) \otimes \mathcal{L}^{3}$, and $\mathcal{O}_{\mathbf{P}}(1)$, respectively, which correspond to the natural injections of $\mathcal{L}^{2}$,  $\mathcal{L}^{3}$, and $\mathcal{O}_S$ into $\pi_* \mathcal{O}_{\mathbf{P}}(1) = \mathcal{O}_S \oplus \mathcal{L}^{2}  \oplus\mathcal{L}^{3}$. We denote by $W$ the projective variety in $\mathbf{P}$ defined by the equation
\beq
\label{Weierstrass}
 y^2  z = 4  x^3 - g_2  x z^2 - g_3   z^3 \,.
\eeq
Its canonical section $\sigma: S \to W$ is given by the point $[x:y:z]=[0:1:0]$ such that $\Sigma:=\sigma(S) \subset W$ is a Cartier divisor on $W$, and its normal bundle is isomorphic to the fundamental line bundle by $p_* \mathcal{O}_{\mathbf{P}}\big(\!-\!\Sigma\big)\cong \mathcal{L}$. It then follows that $W$ is normal if $S$ is normal; and $W$ is Gorenstein if $S$ is, and the formula (\ref{EquationDualizingSheaf}) for the dualizing sheaf reduces to
\beq
\label{canonical}
 \pmb{\omega}_W =\pi^*\Big( \pmb{\omega}_S \otimes \mathcal{L}\Big) \,.
\eeq 
Thus, the total space is a Calabi-Yau variety if and only if the line bundle $\mathcal{L}$ is the anti-canonical bundle of the base, i.e., $\mathcal{L} = \pmb{\omega}_S^{-1}=\mathcal{O}_S(-K_S)$.  For the Weierstrass model $p: W \to S$ of an elliptic fibration $\pi: X \to S$ with section we can compare  the discriminant locus $\Delta(\pi)$, i.e., the points over which $X_p$ is singular, with the vanishing locus of $\Delta(p)=g_2^3-27 \, g_3^2$.  We then have $\Delta(p) \subset \Delta(\pi)$. In fact, the morphism from $X$ to $W$ is a resolution of singularities if and only if $\Delta(p) = \Delta(\pi)$ in which  case it is also a small resolution.
\par We call a Weierstrass model minimal if there is no prime divisor $D$ on $S$ such that $\operatorname{div}(g_2) \ge 4 \, D$ and $\operatorname{div}(g_3) \ge 6 \, D$. We call two minimal Weierstrass models that are smooth over an open subset $S_0\subset S$ equivalent if there is an isomorphism of the Weierstrass models over $S_0$ which preserves the  canonical sections. Every Weierstrass fibration is birationally isomorphic to a minimal Weierstrass fibration.  A criterion for $W$ having only rational singularities can  then be stated as follows: If the reduced discriminant divisor $(\Delta)_{\mathrm{red}}$ has only normal crossings, then $W$ has only rational Gorenstein singularities if and only if the Weierstrass model is minimal.  
\subsection{Families of Weierstrass models}
We now turn to projective families of Calabi-Yau $n$-folds over $B=\mathbb{P}^1 \backslash \lbrace 0, 1, \infty\rbrace$ which we denote by $\pi: X \to B$. Each element $X_t=\pi^{-1}(t)$ of the family is a compact complex $n$-fold with trivial canonical bundle $\pmb{\omega}_{X_t}\cong \mathcal{O}_{X_t}$ and is assumed to be elliptically fibered with section over a fixed normal complex variety $S$. Such a family is described as a family of minimal normal Weierstrass models. That is, each complex $n$-fold $\pi_t: X_t \to S$ is given as a minimal Weierstrass model $p_t: W_t\to S$. For each $t \in \mathbb{P}^1 \backslash \lbrace 0, 1, \infty\rbrace$ and the affine coordinate chart $u=(u_1, \dots, u_{n-1}) \in \mathbb{C}^{n-1} \subset S$, the Weierstrass model $W_t$ has the form
\beq
\label{eq_Weq}
 y^2 z = 4x^3 - g_2\left( t, u  \right)  \, x z^2 - g_3\left(t, u  \right) \, z^3\,.
\eeq
There is a holomorphic sub-bundle $\mathcal{H}\to B$ of the vector bundle $V=R^n\pi_*\mathbb{C}_{X} \to B$ whose fibers are $H^0(\pmb{\omega}_{X_t})\subset H^n(X_t,\mathbb{C})$. The vector bundle $\mathcal{V}=V \otimes \mathcal{O}_B$ carries a canonical flat connection $\nabla$, called the the \emph{Gauss-Manin connection} \cites{MR0229641,MR0233825,MR0282990,MR0258824}. Representing the family $X_t$ as the family of Weierstrass models $W_t$ determines an explicit meromorphic section $\eta \in \Gamma(B,\mathcal{V})$ such that such that $\eta_t \in H^n(X_t,\mathbb{C})$ can be represented by the closed differential form
\beq
\label{eqn:flat_section}
  \eta_t = du_1 \wedge \dots \wedge du_{n-1} \wedge \frac{dx}{y} \,,
\eeq 
in the affine chart $z=1$ on $W_t$. A local parallel section $\Sigma$ of the dual bundle $\mathcal{H}^*$ for some open $U \subset B$ is represented  by a closed transcendental $n$-cycle $\Sigma(t)$ on each fiber $X_t$ with $t \in U$. The \emph{period sheaf} $\Pi(\mathcal{H},\eta) \to B$ is the sheaf whose stalks are generated by local analytic functions obtained by paring the global section $\eta$ with local parallel sections of $\mathcal{H}^*$, i.e., of the form 
\beq
t \in U \mapsto \omega(t) = \langle \Sigma, \eta \rangle = \oint_{\; \Sigma(t)} \eta_t \,,
\eeq
for open $U \subset B$ and the fiberwise Poincar\'e pairing $\langle. , .\rangle$. We call the local non-vanishing analytic function $\omega(t)$ a \emph{period integral} or \emph{period over $\Sigma(t)$}.
\par The Picard-Fuchs differential equation is then obtained as follows: we fix a meromorphic vector field $\frac{d}{dt}$ on the curve $B$ for the choice of affine coordinate $t$ on $\mathbb{P}^1$.  The vector field $\frac{d}{dt}$ induces a covariant derivative operator $\nabla_{d/dt}$ on $\mathcal{V}$. Since $\mathcal{V}$ has rank $n$, the meromorphic section $\eta$ and its derivatives $\nabla_{d/dt}^i \eta$ for $1 \le 1 \le n$ must be linearly dependent over the field of meromorphic functions on $B$, and there is a relation
\beqn
 \sum_{i=0}^m a_i(t) \, \nabla_{d/dt}^i \eta = 0 \,,
 \eeqn
where $m \le n$ and we always normalize to have $a_m=1$. Since $\nabla$ vanishes on the parallel section $\Sigma$ of $\mathcal{H}^*$, it follows that the period $\omega(t)$ satisfies the differential equation
\beqn
 \frac{d^m}{dt^m} \omega(t) + \sum_{i=0}^{m-1} a_i(t) \, \frac{d^i}{dt^i} \omega(t)  = 0 \,.
\eeqn 
\subsection{Twisted family of Weierstrass models}
\label{ssec:twisting}
We define a \emph{generalized functional invariant} to be a triple $(i, j, \alpha)$ with $i, j \in \mathbb{N}$ such that $1 \le i, j \le 6$ and $\alpha \in \{ \frac{1}{2}, 1\}$. The general notion of generalized functional invariant was first introduced in \cite{MR1877754}. A generalized functional invariant defines a family of ramified covering maps $\mathbb{P}^1 \to \mathbb{P}^1$ of degree $i+j$ given by 
\beq
\label{eqn:gfi}
 [v_0:v_1] \mapsto \Big[ c_{ij} v_1^{i+j} \tilde{t} \ : v_0^i (v_0+v_1)^j \Big] \;, 
 \eeq
which is totally ramified over $0$, ramified to degrees $i$ and $j$ over $\infty$, and has a simple ramification point over $\tilde{t}$. 
\par For a family $\pi: X \to B$ with Weierstrass model~(\ref{eq_Weq}) such that
\begin{small}  \begin{gather}
\label{cond:normal}
0 \le  \deg_{t}{(g_2)} \le \min\Big(\frac{4}{i}, \frac{4\alpha}{j}\Big), \quad
0 <  \deg_{t}{(g_3)} \le \min\Big(\frac{6}{i}, \frac{6\alpha}{j}\Big)\, ,
\end{gather}  \end{small}%
we construct a new family $\tilde{\pi}: \tilde{X} \to B$ such that each element $\tilde{X}_{\tilde{t}}=\tilde{\pi}^{-1}(\tilde{t})$  is a compact complex $(n+1)$-fold and also elliptically fibered with section over $\mathbb{P}^1 \times S$. We call this new family the \emph{twisted family with generalized functional invariant $(i, j, \alpha)$ of $X \to B$}. For $\tilde{t} \in B$, there is a local coordinate chart $\{[v_0:v_1],(u_1, \dots, u_{n-1}) \in \mathbb{P}^1 \times S\}$ such that the Weierstrass model $\tilde{W}_{\tilde{t}}$ for $\tilde{X}_{\tilde{t}}$ is given by
\begin{small}  \begin{gather}
\label{eqn:twist_model}
 \tilde{y}^2 \tilde{z}= 4 \, \tilde{x}^3 - g_2\left( \frac{c_{ij} \tilde{t} v_1^{i+j}}{v_0^i(v_0+v_1)^j}, \, u  \right) \, v_0^4 v_1^{4-4\alpha} (v_0+v_1)^{4\alpha} \tilde{x} \tilde{z}^2\\
 \nonumber
 - g_3\left(\frac{c_{ij} \tilde{t} v_1^{i+j}}{v_0^i(v_0+v_1)^j}, \, u  \right)  \, v_0^6 v_1^{6-6\alpha} (v_0+v_1)^{6\alpha} \, \tilde{z}^3 
\end{gather}  \end{small}%
with $c_{i j}=(-1)^i \, i^i \, j^j/(i+j)^{i+j}$.  The Weierstrass model~(\ref{eqn:twist_model}) is smooth over the open subset $D^0 \times S^0$  where $D^0$ is the complement in $\mathbb{P}^1$ of the curves $v_0=0$, $v_1=0$, $v_0+v_1=0$, and $c_{i j} \tilde{t} v_1^{i+j} -v_0^i(v_0+v_1)^j =0$. Conditions~(\ref{cond:normal}) ensure that the Weierstrass model is minimal and normal if the original Weierstrass model~(\ref{eq_Weq}) is. Over the new base $\tilde{S}=\mathbb{P}^1 \times S$, the fundamental line bundle and canonical bundle are given by
\beq
 \tilde{\mathcal{L}} = \mathcal{M} \boxtimes \mathcal{L} \,, \quad \pmb{\omega}_{\tilde{S}} = \pmb{\omega}_{\mathbb{P}^1}  \boxtimes \pmb{\omega}_{S} \,.
\eeq
Here the external tensor product $\mathcal{M} \boxtimes \mathcal{L}= \operatorname{pr}_1^*\mathcal{M} \otimes \operatorname{pr}_2^*\mathcal{L}$ denotes the tensor product on $\tilde{S}=\mathbb{P}^1 \times S$ of the two pullback bundles to $\tilde{S}$, along the canonical projection maps $\operatorname{pr}_1: \tilde{S} \to \mathbb{P}^1$ and $\operatorname{pr}_2: \tilde{S} \to S$. Two $\mathbb{C}^*$-group actions, acting on Equation~(\ref{eqn:twist_model}), are given by assigning weights to the defining variables as listed in Table~\ref{tab:deg3} where \emph{deg} denotes the total weight of Equation~(\ref{eqn:twist_model}) and \emph{sum} denotes the sum of weights of the defining variables. Since the total weight equals the sum of weights of the variables, we have $\mathcal{M} \otimes \pmb{\omega}_{\mathbb{P}^1}\cong  \mathcal{O}_{\mathbb{P}^1}$. Therefore, the Calabi-Yau condition $\tilde{\mathcal{L}} = \pmb{\omega}_{\tilde{S}}^{-1}$ is satisfied for the new family $\tilde{\pi}: \tilde{X} \to B$ such that $\pmb{\omega}_{\tilde{X}_{\tilde{t}}} \cong \mathcal{O}_{\tilde{X}_{\tilde{t}}}$, if it is satisfied for the original family~(\ref{eq_Weq}).
\begin{table}[H]
\scalebox{\ScaleBig}{
\begin{tabular}{|c|c|ccc|cc|c|}
\hline
$\mathbb{C}^*$ & deg & $x$ & $y$ & $z$ & $v_0$ & $v_1$ & $\Sigma$ \\
\hline
\hline
$\lambda_1$ & $3$ & $1$ & $1$ & $1$ & $0$ & $0$ & $3$\\
$\lambda_2$ & $12$ & $4$ & $6$ & $0$ & $1$ & $1$ & $12$\\
\hline
\end{tabular}}
\caption{Weights of variables in Weierstrass equation}\label{tab:deg3}
\end{table}
\subsubsection{Period integrals of twisted family}
The differential form $\eta_t$ in Equation~(\ref{eqn:flat_section}) defines a flat section $\eta \in \Gamma(B,\mathcal{H})$ for the family in Equation~(\ref{eq_Weq}). A flat section $\tilde{\eta} \in \Gamma(B,\tilde{\mathcal{H}})$ for the twisted family~(\ref{eqn:twist_model}) is then given by the differential form 
\beq
  \tilde{\eta}_{\tilde{t}} = dv \wedge du_1 \wedge \dots \wedge du_{n-1} \wedge \frac{d\tilde{x}}{\tilde{y}} \,,
\eeq 
in the affine chart $[v_0:v_1]=[v:1]$ and $\tilde{z}=1$, such that
\beq
  \tilde{\eta}_{\tilde{t}} = \frac{dv}{v(v+1)^\alpha} \wedge \eta_t \,,
\eeq
where we have used $\tilde{x}= v_0^2 v_1^{2-2\alpha} (v_0+v_1)^{2\alpha} x$ and $\tilde{y}= v_0^3 v_1^{3-3\alpha} (v_0+v_1)^{3\alpha} y$. Thus, if $\omega(t)$ is a local section of the period sheaf $\Pi(\mathcal{H},\eta) \to B$, a section of the new period sheaf $\Pi(\tilde{\mathcal{H}},\tilde{\eta}) \to \tilde{B}$ is given by
\beq
\label{eqn:period_new}
  \tilde{\omega}(\tilde{t}) = \oint_{C} \frac{dv}{v(v+1)^\alpha}  \; \omega\left( \frac{c_{ij} \tilde{t}}{v^i(v+1)^j}\right) \,,
\eeq
where $C$ is a non-contractible loop in the punctured $v$-plane.  We have the following:
\begin{proposition}
\label{prop:period_integral_general}
Let $\omega(t)$ be a period integral for the family~(\ref{eq_Weq}) with absolutely convergent series $\omega(t) = \sum_{k\ge 0} f_k t^k$ for $|t|<1$, and the twisted family~(\ref{eqn:twist_model}) with generalized functional invariant $(i, j, \alpha)$ satisfy conditions~(\ref{cond:normal}). Let $C_{1/2}(0)$ be the circle  $|v|=\frac{1}{2}$ in the $v$-plane with counterclockwise orientation. For every $\tilde{t}\in \mathbb{C}$ with $|\tilde{t}|<1/(2^{i+j+1}|c_{i j}|)$ and $c_{i j}=(-1)^i \, i^i \, j^j/(i+j)^{i+j}$, the period integral~(\ref{eqn:period_new}) for $C=C_{1/2}(0)$ has an absolutely convergent series given by the Hadamard products:
\beq
\label{eq:period_new}
\begin{split}
  \text{if $\alpha=1$:}& \qquad \tilde{\omega}(\tilde{t})  =  \,(2\pi i) \;\,  \hpg{i+j-1}{i+j-2}{  \frac{1}{i+j} \quad  \dots \quad \frac{i+j-1}{i+j} }{ \frac{1}{i} \; \dots\;  \frac{i-1}{i} \; \frac{1}{j} \; \dots\;  \frac{j-1}{j} }{\tilde{t}}    \star  \omega(\tilde{t}) \;,\\[0.2em]
  \text{if $\alpha=\frac{1}{2}$:}& \qquad \tilde{\omega}(\tilde{t})  =  \,(2\pi i) \; \,  \hpg{i+j}{i+j-1}{  \frac{\alpha}{i+j} \quad  \dots \quad \frac{\alpha+i+j-1}{i+j} }{ \frac{1}{i} \; \dots\;  \frac{i-1}{i} \; \frac{\alpha}{j} \; \dots\;  \frac{\alpha+j-1}{j} }{\tilde{t}}    \star  \omega(\tilde{t}) \,.
\end{split}
\eeq
\end{proposition}
\begin{proof}
For $|v|=\frac{1}{2}$ and $|\tilde{t}|<1/(2^{i+j+1}|c_{i j}|)$ we have $|t|=|\frac{c_{i j} \, \tilde{t}}{v^i (v+1)^j} |<1$. We use the absolutely and uniformly convergent series $\omega(t) = \sum_{k\ge 0} f_k t^k$ and carry out a term-by-term integration to obtain
\beqn
\label{first_eval}
\begin{split}
 \oint_{|v|=\frac{1}{2}} \frac{dv}{v (v+1)^\alpha} \; \omega\left(  \frac{c_{ij} \, \tilde{t}}{v^i  (v+1)^j}\right) 
=&  \sum_{k \ge 0} f_k \, \left(c_{ij} \tilde{t}\right)^k  \oint_{|v|=\frac{1}{2}} \frac{dv}{v^{ik+1} (v+1)^{jk+\alpha}}\,.
\end{split}
\eeqn
Using the formula $(1+z)^{-k}=\sum_{l \ge 0} \frac{\Gamma(l+k)}{\Gamma(k)\Gamma(l+1)} (-z)^l$, we obtain from a residue computation the identity
\beq
\begin{split}
& \sum_{k \ge 0}  f_k \, \left(c_{ij} \tilde{t}\right)^k \sum_{l\ge0} \frac{\Gamma(jk+l+\alpha)}{\Gamma(jk+\alpha) \, \Gamma(l+1)} (-1)^l  \oint_{|v|=\frac{1}{2}} \frac{dv}{v^{ik-l+1}}\\
= &  \, (2\pi i)\,   \sum _{k \ge 0} f_k \, \dfrac{\prod_{m=0}^{i+j-1} \left(\frac{\alpha+m}{i+j}\right)_k}{k!  \prod_{m=1}^{i-1} \big(\frac{m}{i}\big)_k \, \prod_{m=0}^{j-1} \big(\frac{m+\alpha}{j}\big)_k} \; \tilde{t}^k \,,
\end{split}
\eeq
where we used Gauss' multiplication formula for the Gamma function
\beq
\label{eqn:GaussMultiplication}
 \Gamma(n z)=(2\pi)^{(1-n)/2} n^{n z-\frac{1}{2}} \prod_{r=0}^{n-1} \Gamma\left( z + \frac{r}{n}\right) \,.
\eeq
Since we also have $|\tilde{t}|<1$ Equation~(\ref{first_eval}) can be further simplified using Gamma-function identities and the Hadamard product. We obtain 
\beq
\begin{split}
 & \, (2\pi i)\,   \sum _{k \ge 0} f_k \, \dfrac{\prod_{m=0}^{i+j-1} \left(\frac{\alpha+m}{i+j}\right)_k}{k!  \prod_{m=1}^{i-1} \big(\frac{m}{i}\big)_k \, \prod_{m=0}^{j-1} \big(\frac{m+\alpha}{j}\big)_k} \; \tilde{t}^k\\
 = & \,(2\pi i) \;   \hpg{i+j}{i+j-1}{ \frac{\alpha}{i+j}, \quad  \dots \quad \frac{\alpha+i+j-1}{i+j} }{\frac{1}{i}, \dots, \frac{i-1}{i}, \frac{\alpha}{j}, \dots, \frac{\alpha+j-1}{j} }{\tilde{t}} 
\star \omega(\tilde{t}) \,.
\end{split}
\eeq
The remaining equation is obtained by setting $\alpha=1$ and observing an obvious cancellation in the coefficients of the hypergeometric series.
\end{proof}
\subsection{Variants of the twist construction}
\subsubsection{Twists with two parameters and subfamilies}
\label{sssec:2parameters}
The twist construction with generalized functional invariant $(i, j, \alpha)=(1,1,1)$ can be generalized in a way that introduces two parameters. It is based on the two-parameter family of ramified covering maps $\mathbb{P}^1 \to \mathbb{P}^1$ of degree two given by 
\beq
\label{eqn:2parameter_map}
 [v_0:v_1] \mapsto \Big[ 4 a v_0 (v_0+v_1) + (a-b)  v_1^{2} \ : \ 4 v_0 (v_0+v_1) \Big]\,,
\eeq
which is totally ramified over $a$ and $b$. For $a=0$ and $b=\tilde{t}$, this family coincides with the covering maps used in the twist construction with generalized functional invariant $(i, j, \alpha)=(1,1,1)$ in Equation~(\ref{eqn:gfi}).
\par We restrict the two-parameter family to the locus $a=-b=\tilde{t}$ instead, and obtain a new family $\tilde{\pi}: \tilde{X} \to B$ such that each element $\tilde{X}_{\tilde{t}}=\tilde{\pi}^{-1}(\tilde{t})$  is a compact complex $(n+1)$-fold with trivial canonical bundle and also elliptically fibered with section over $\mathbb{P}^1 \times S$. The Weierstrass model $\tilde{W}_{\tilde{t}}$ for $\tilde{X}_{\tilde{t}}$ is given by
\begin{small}  \begin{gather}
\label{eqn:twist_model_LowerRank}
 \tilde{y}^2 \tilde{z}= 4 \, \tilde{x}^3 - g_2\left( a + \frac{(a-b) v_1^{2}}{4v_0(v_0+v_1)}, \, u  \right) v_0^4  (v_0+v_1)^{4} \tilde{x} \tilde{z}^2\\
 \nonumber
 - g_3\left( a + \frac{(a-b) v_1^{2}}{4v_0(v_0+v_1)}, \, u   \right)  v_0^6 (v_0+v_1)^{6} \, \tilde{z}^3 \, ,
\end{gather}  \end{small}%
where we have set $a=-b=\tilde{t}$.
 Thus, if $\omega(t)$ is a local section of the period sheaf $\Pi(\mathcal{H},\eta) \to B$, a section of the new period sheaf $\Pi(\tilde{\mathcal{H}},\tilde{\eta}) \to \tilde{B}$ is given by
\beq
\label{eqn:period_new_LowerRank}
  \tilde{\omega}(\tilde{t}) = \oint_{C} \frac{dv}{v(v+1)}  \; \omega\left( \tilde{t}\big(1 + \frac{1}{2v(v+1)}\big)\right) \,,
\eeq
where $C$ is a non-contractible loop in the punctured $v$-plane.  We have the following:
\begin{proposition}
\label{prop:period_integral_general_2}
Let $\omega(t)$ be a period integral for the family~(\ref{eq_Weq}) with absolutely convergent series $\omega(t) = \sum_{k\ge 0} f_k t^k$ for $|t|<1$, and the twisted family~(\ref{eqn:twist_model_LowerRank}) with generalized functional invariant $(i, j, \alpha)=(1,1,1)$ and $a=-b=\tilde{t}$ satisfy conditions~(\ref{cond:normal}). Let $C_{1/2}(0)$ be the circle in the $v$-plane $|v|=\frac{1}{2}$ oriented counterclockwise. For every $\tilde{t}\in \mathbb{C}$ with $|\tilde{t}|<1/2$, the period integral~(\ref{eqn:period_new_LowerRank}) for $C=C_{1/2}(0)$ has an absolutely convergent series given by the Hadamard product
\beq
\label{eqn:pure_twist_LowerRank_b}
\begin{split}
  \tilde{\omega}(\tilde{t})  =  \,(2\pi i) \; \sum_{n \ge 0} \frac{f_{2n} \, \left(\frac{1}{2}\right)_n}{n!} \, t^{2n} =  \,(2\pi i) \, {}_1F_0\!\left(\left. \frac{1}{2} \right| t^2 \right) \star \omega(t)\,.
\end{split}
\eeq
\end{proposition}
\begin{proof}
The proof is analogous to the proof of Proposition~\ref{prop:period_integral_general}, where we use the additional identity
\beq
 \sum_{l=0}^n \binom{n}{l} \binom{2l}{l} \left( - \frac{1}{2}\right)^l = \left\lbrace \begin{array}{ll} 0 & \text{if $n$ is odd,}\\ \frac{\left(\frac{1}{2}\right)_k}{k!} & \text{if $n=2k$ is even.}\end{array}\right.
\eeq
\end{proof}
\subsubsection{Relation to quadratic twists}
\label{sssec:connection_with_qtwist}
Let  $\pi: X \to B$  be a family with Weierstrass model~(\ref{eq_Weq}) such that
\begin{small}  \begin{gather}
\label{cond:normal_qt}
0 \le  \deg_{t}{(g_2)} \le 4, \quad 0 <  \deg_{t}{(g_3)} \le 6\, .
\end{gather}  \end{small}%
The \emph{quadratic-twist family} $\hat{\pi}: \hat{X} \to B$ is constructed by promoting the family parameter $t$ to an additional complex variable and carrying out a quadratic twist. Each element $\hat{X}_{\tilde{t}}=\hat{\pi}^{-1}(\tilde{t})$  is a compact complex $(n+1)$-fold and also elliptically fibered with section over $\mathbb{P}^1 \times S$. For $\tilde{t} \in B$ and local coordinate chart $\{[w_0:w_1],(u_1, \dots, u_{n-1}) \in \mathbb{P}^1 \times S\}$, the Weierstrass model $\hat{W}_{\tilde{t}}$ for $\hat{X}_{\tilde{t}}$ is given by
\begin{small}  \begin{gather}
\label{eqn:twist_model_pure}
 \hat{y}^2 \hat{z}= 4 \, \hat{x}^3 - g_2\left( \tilde{t} \, \frac{w_0}{w_1}, \, u  \right) w_1^{4}  w_0^2 (w_0- w_1)^{2} \,\hat{x} \hat{z}^2\\
 \nonumber
 - g_3\left( \tilde{t} \, \frac{w_0}{w_1}, \, u  \right)  w_1^{6} w_0^3  (w_0- w_1)^{3} \, \hat{z}^3 \, .
\end{gather}  \end{small}%
The Weierstrass model~(\ref{eqn:twist_model}) is smooth over an open subset of the form $D^0 \times S^0$  where $D^0$ is the complement in $\mathbb{P}^1$ of $w_0=0$, $w_1=0$, and $w_0-\tilde{t}w_1=0$. Conditions~(\ref{cond:normal_qt}) ensure that the Weierstrass model is  minimal and normal if the Weierstrass model~(\ref{eq_Weq}) is. Moreover, a similar argument as in Section~\ref{ssec:twisting} shows that the Calabi-Yau condition is satisfied such that $\pmb{\omega}_{\hat{X}_{\tilde{t}}} \cong \mathcal{O}_{\hat{X}_{\tilde{t}}}$. We have the following:
\begin{lemma}
The quadratic-twist family $\hat{\pi}: \hat{X} \to B$ with Weierstrass model~(\ref{eqn:twist_model_pure}) is birationally equivalent to the twisted family $\tilde{\pi}: \tilde{X} \to B$ with generalized functional invariant $(i, j, \alpha)=(1,1,1)$ and Weierstrass model~(\ref{eqn:twist_model}). In particular, the periods of the two families coincide.
\end{lemma}
\begin{proof}
Setting
\beqn
 w = - \frac{1}{4 v(v+1)}\,, \quad \hat{x} = \frac{(1+2v)^2 \tilde{x}}{2^4v^4(v+1)^4}\,, \quad \hat{y} = \frac{(1+2v)^3 \tilde{x}}{2^6v^6(v+1)^6}\,,
\eeqn
in the affine charts $w_1=\hat{z}=v_1=\tilde{z}=1$, $w_0=w$ and $v_0=v$, transforms Equation~(\ref{eqn:twist_model_pure}) into Equation ~(\ref{eqn:twist_model}) such that $\hat{\eta}_{\tilde{t}}= \tilde{\eta}_{\tilde{t}}$.
\end{proof}
\begin{remark}
The sequence of examples from Section~\ref{FirstExample} is precisely based on iteratively applying Equation~(\ref{eqn:twist_model_pure}) in the affine chart $\hat{z}=1$ and
$[w_0:w_1]=[u:t]$ (or $[w_0:w_1]=[v:t]$, etc.)
\end{remark}
There is also a two-parameter family with Weierstrass models $\hat{W}_{a, b}$ given by
 \begin{small}  \begin{gather}
\label{eqn:twist_model_pure_LowerRank_b}
 \hat{y}^2 \hat{z}= 4 \hat{x}^3 - g_2\left(  \frac{w_0}{w_1}, \, u  \right) w_1^{4}  (w_0-aw_1)^2(w_0- b w_1)^{2}\, \hat{x} \hat{z}^2\\
 \nonumber
 - g_3\left( \frac{w_0}{w_1}, \, u  \right)  w_1^{6}  (w_0-aw_1)^3 (w_0- b w_1)^{3}  \, \hat{z}^3 \, .
\end{gather}  \end{small}%
The quadratic-twist family with Weierstrass model~(\ref{eqn:twist_model_pure_LowerRank_b}) is isomorphic to the family with Weierstrass model~(\ref{eqn:twist_model_LowerRank}). This is seen by setting
\beq
\label{RationalMapK3}
 w = a+ \frac{a-b}{4 \, v \, (v+1)}\,, \; 
 \hat{x} = \dfrac{(b-a)^2(1+2v)^2x}{2^4 v^4  (v+1)^4}\,, \; 
 \hat{y} =  \dfrac{(b-a)^3(1+2v)^3y}{2^6 v^6  (v+1)^6} \,.
\eeq
in the affine charts $w_1=\hat{z}=v_1=\tilde{z}=1$, $w_0=w$ and $v_0=v$.
\par Let us explain the geometric relationship between the quadratic-twist family and twisted family with generalized functional invariant $(i, j, \alpha)=(1,1,1)$ for a K3 surface. Let $X \to \mathbb{P}^1$ be the Jacobian rational elliptic surface given by 
\beqn
 y^2 z = 4x^3-g_2(t)\, x z^2-g_3(t) \, z^3 \,.
\eeqn
If the map $f: \mathbb{P}^1 \to \mathbb{P}^1$ is given by $f(v) = a+(a-b)/(4 v  (v+1))$, then the pull-back $t=f(v)$ of the rational elliptic surface is a two-parameter family of Jacobian elliptic K3 surface $\tilde{X} \to \mathbb{P}^1$. On $\tilde{X}$, we have the deck transformation $\imath$ given by $v \mapsto -v-1$ and the elliptic involution $-\mathrm{id}$. The composition $\jmath=-\mathrm{id} \circ \imath$ is a Nikulin involution leaving the holomorphic two-form invariant, and the minimal resolution of the quotient $\tilde{X}/\jmath$ is the Jacobian elliptic K3 surface $\hat{X} \to \mathbb{P}^1$ given by
\beqn
 \hat{y}^2 \hat{z}= 4\hat{x}^3-g_2(t) \, (t-a)^2 (t-b)^2 \hat{x} \hat{z}^2 -g_3(t)  \, (t-a)^3 (t-b)^3 \hat{z}^3\,.
\eeqn
That is, the K3 surface $\hat{X}$ is the quadratic-twist family of $X$ having fibers of Kodaira-type $I_0^*$ over
the two ramification points of $f$. The situation is summarized in Figure~\ref{fig_geom}.
\begin{figure}[ht!]
\begin{tikzpicture}[thick,scale=1, every node/.style={scale=\ScaleBig}]
  \matrix (m) [matrix of math nodes, row sep=3em,column sep=3em]{
  			&  \tilde{X}				&	\\
 X			& \mathbb{P}^1 	&  \hat{X}\\
 \mathbb{P}^1	& 				& \mathbb{P}^1	\\ };
  \path[-stealth]
  (m-1-2) edge (m-2-1)
  (m-1-2) edge (m-2-3)
  (m-1-2) edge (m-2-2)
  (m-2-1) edge (m-3-1)
  (m-2-3) edge (m-3-3)
  (m-2-2) edge node [above]	{$f$} (m-3-1)
  (m-2-2) edge node [above]	{\phantom{I}$f$} (m-3-3)  
  (m-3-1) edge [-,double distance = 1.5pt](m-3-3);
\end{tikzpicture}
\caption{Relation between quadratic-twist family and twisted family}\label{fig_geom}
\end{figure}
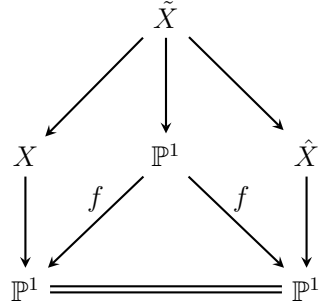
For each pair of K3 surfaces obtained as quadratic-twist family and twisted family with generalized functional invariant, we can easily check that the determinants of 
the discriminant groups of their transcendental lattices always differ by a square of the form $(1/2)^{2\alpha}$ with $0 \le \alpha \le 2$. 
As the transcendental lattices are related by the isometry given in Equation~(\ref{RationalMapK3}) and is generally not a Hodge isometry,
this is in perfect agreement with a general result obtained by Mehran~\cite{MR2306633}.
\begin{remark}
The Picard-Fuchs systems for the full two-parameter family in Equations~(\ref{eqn:twist_model_LowerRank}) and~(\ref{eqn:twist_model_pure_LowerRank_b}) are coupled linear partial differential equations in two complex variables; see \cites{MR960834, MR960835}. For families of elliptic curves, the Picard-Fuchs systems are obtained from the generalized hypergeometric system satisfied by the Appell function $F_1$ by rational pullback. For K3 surfaces, the Picard-Fuchs systems are obtained from the generalized hypergeometric system satisfied by the Appell function $F_2$ by rational pullback. Some explicit examples were determined in \cites{MR3767270, Malmendier:2019aa}. 
\end{remark}
\subsubsection{Twists by rational surfaces}
\label{sssec:product_twists}
A rational elliptic surface $X' \to B=\mathbb{P}^1 \backslash \lbrace 0, 1, \infty\rbrace$ has a singular fiber of Kodaira-type $IV^*$, $III^*$, or $II^*$ over $t=\infty$, if and only if its Weierstrass model is given by
\beq
\label{eqn:rational_family_twist}
 y^2 z = 4x^3-g'_2(t)\, x z^2-g'_3(t) \, z^3 \,,
\eeq 
and the degrees of the polynomials $g'_2(t)$ and $g'_3(t)$ are at most $1$ and $2$, respectively. In fact, we will focus on three cases $X'_k$ for $k=1, 2, 3$ in Table~\ref{tab:twist_rationalsurface} where the rational elliptic surfaces are extremal and given in Table~\ref{tab:3ExtRatHg}; the surfaces themselves will be discussed in Section~\ref{ssec:ec_families}, and the number $\mu$ associated with each $X'_k$ surface will be explained in Corollary~\ref{curve_reduction}.
\begin{table}[H]
\scalebox{\ScaleBig}{
\begin{tabular}{|c|c|c|c||c|c|}
\hline
$k$ & $X'_k$ & $\mu$ & fiber at $t=\infty$ & $\deg_t(g_2)$ & $\deg_t(g_3)$ \\
\hline
\hline
&&&&&\\[-0.9em]
$4$ & $X'_4=X_{431}$ & $\frac{1}{3}$ & $IV^*$ & $1$ & $2$ \\[0.2em]
$3$ & $X'_3=X_{321}$ & $\frac{1}{4}$ & $III^*$ & $1$ & $1$ \\[0.2em]
$2$ & $X'_2=X_{211}$ & $\frac{1}{6}$ & $II^*$  & $0$ & $1$  \\[0.2em]
\hline
\end{tabular}}
\caption{Rational surfaces used for twisting}\label{tab:twist_rationalsurface}
\end{table}
\par We fix any such rational surface $X'_k$. Given a second rational elliptic surface $X \to B$ with Weierstrass model
\beq
\label{eqn:rational_family}
 y^2 z = 4x^3-g_2(t)\, x z^2-g_3(t) \, z^3 \,,
\eeq
with degrees of $g_2(t)$ and $g_3(t)$ being at most $4$ and $6$, respectively, we define a family of Gorenstein threefolds $\tilde{\pi}: \tilde{X} \to B$ such that each element $\tilde{X}_{\tilde{t}}=\tilde{\pi}^{-1}(\tilde{t})$  is a compact complex threefold and also elliptically fibered with section over the Hirzebruch surface $\mathbb{F}_n$. We call this family the \emph{twist family of $X \to B$ with $X'_k \to B$}. Here, the Hirzebruch surface is given by $\mathbb{F}_n = \mathbb{P}(\, \mathcal{O}_{\mathbb{P}^1}(-n) \oplus  \mathcal{O}_{\mathbb{P}^1})$ with $n=0, \dots, k$.
\par We define polynomials
\beq
\label{eqn:product_twist_gs}
 g_2(t, u, v) := h(u, v)^4 v^4 g_2\left(\frac{t}{v}\right)\,, \quad  g_3(t, u, v) := h(u, v)^6 v^6 g_3\left(\frac{t}{v}\right)\,,
\eeq
where we have used $h(u, v)^2= 4u^3 - g'_2(v) \, u - g'_3(v)$.  We consider $\mathbb{F}_n$ the quotient space of $\mathbb{C}^4 \backslash \lbrace u_0=u_1=v_0=v_1=0 \rbrace$ by the action of $\mathbb{C}^* \times \mathbb{C}^*$ given by
\beqn
 (\lambda_2, \lambda_3) \cdot [u_0:u_1:v_0:v_1 ] = [  \lambda_3 u_0:  \lambda_3 u_1: \lambda_3^n \lambda_2 v_0: \lambda_2 v_1 ] \,,
\eeqn
It follows that $\mathbb{F}_0 = \mathbb{P}^1 \times \mathbb{P}^1$, and $\mathbb{F}_1$ is the non-minimal surface obtained as the blow-up of $\mathbb{P}^2$ in one point.
For $\tilde{t} \in \mathbb{P}^1$ and coordinates $[u_0:u_1:s_0:s_1 ]\in\mathbb{F}_n$ and $[\tilde{x}:\tilde{y}:\tilde{z}] \in \mathbb{P}(2,3,1)$, let the Weierstrass model $\tilde{W}_{\tilde{t}}$ for $\tilde{X}_{\tilde{t}}$ be given by
\beq
\label{CY3mtWEf}
 \tilde{y}^2 \tilde{z}= 4 \tilde{x}^3 - g_2\left( \tilde{t}, \frac{u_0}{u_1}, \frac{v_0}{v_1 \, u_1^n} \right) \,  u_1^{4(n+2)} v_1^8 \tilde{x} \, \tilde{z}^2  
- g_3\left( \tilde{t}, \frac{u_0}{u_1}, \frac{v_0}{v_1 \, u_1^n} \right) \, u_1^{6(n+2)} v_1^{12} \tilde{z}^3 \,,
\eeq 
with $g_2(t, u, v)$ and $g_3(t, u, v)$ given in Equation~(\ref{eqn:product_twist_gs}). We have the following:
\begin{lemma}
\label{lem:rational_twist}
In the situation above, Equation~(\ref{CY3mtWEf}) is a minimal and normal Weierstrass model over $\mathbb{F}_n$ for $n=0, \dots,k$ such that $\pmb{\omega}_{\tilde{X}_{\tilde{t}}} \cong \mathcal{O}_{\tilde{X}_{\tilde{t}}}$ for $\tilde{t} \in B$.
\end{lemma}
\begin{proof}
The first part follows by checking that the Weierstrass model obtained for each rational elliptic surface $X'_k$ in Table~\ref{tab:twist_rationalsurface} is minimal. Then, three $\mathbb{C}^*$-group actions on the defining variables in Equation~(\ref{CY3mtWEf}) are given by the weights listed in Table~\ref{tab:deg5} where \emph{deg} denotes the total weight of Equation~(\ref{CY3mtWEf}) and \emph{sum} denotes the sum of weights of the defining variables.  If the condition is satisfied that
the total weight of Equation~(\ref{CY3mtWEf}) equals the sum of weights of the defining variables for each acting $\mathbb{C}^*$ in Equation~(\ref{CY3mtWEf}),
then a Calabi-Yau threefold is obtained by removing the loci $\lbrace u_0 = u_1 = 0\rbrace$, $\lbrace v_0 = v_1 = 0\rbrace $, $\lbrace x = y = z = 0\rbrace$ from
the solution set of Equation~(\ref{CY3mtWEf}), and taking the quotient $(\mathbb{C}^*)^3$.  Details are explained in \cite{MR1412112}.
\end{proof}
\begin{table}[H]
\scalebox{\ScaleBig}{
\begin{tabular}{|c|c|ccc|cc|cc|c|}
\hline
$\mathbb{C}^*$ & deg & $x$ & $y$ & $z$ & $v_0$ & $v_1$ & $u_0$ & $u_1$ & sum \\
\hline
\hline
$\lambda_1$ & $3$ & $1$ & $1$ & $1$ & $0$ & $0$ & $0$ & $0$ & $3$\\
$\lambda_2$ & $12$ & $4$ & $6$ & $0$ & $1$ & $1$ & $0$ & $0$ & $12$\\
$\lambda_3$ & $6(n+2)$ & $2(n+2)$ & $3(n+2)$ & $0$ & $n$ & $0$ & $1$ & $1$ & $6(n+2)$\\
\hline
\end{tabular}}
\caption{Weights of variables in Weierstrass equation}\label{tab:deg5}
\end{table}
\par The differential one-form $\eta_t=dx/y$ (and $\eta'_t=dx/y$) defines a flat section of the conormal bundle for the family in Equation~(\ref{eqn:rational_family}) (resp.~Equation~(\ref{eqn:rational_family_twist})) in the affine chart $z=1$. A flat section $\tilde{\eta} \in \Gamma(\mathbb{P}^1,\tilde{\mathcal{H}})$ for the twisted family~(\ref{CY3mtWEf}) is then given by the differential form 
\beq
  \tilde{\eta}_{\tilde{t}} = dv \wedge du \wedge \frac{d\tilde{x}}{\tilde{y}} \,,
\eeq 
in the affine chart $[u_0:u_1:v_0:v_1]=[u:1:v:1]$ and $\tilde{z}=1$, such that
\beq
  \tilde{\eta}_{\tilde{t}} = \frac{dv}{v} \wedge \frac{du}{h(u, v)} \wedge \eta_{\tilde{t}/v} = \frac{dv}{v} \wedge \eta'_{v} \wedge \eta_{\tilde{t}/v} \,.
\eeq
Thus, if $\omega(t)$ (and $\omega'(t)$) is a local section of the period sheaf of the rational elliptic surface $X \to \mathbb{P}^1$ (resp.~$X' \to \mathbb{P}^1$), a section of the period sheaf $\Pi(\tilde{\mathcal{H}},\tilde{\eta}) \to \mathbb{P}^1$ is given by
\beq
\label{eqn:period_new_producttwist}
  \tilde{\omega}(\tilde{t}) = \oint_{C} \frac{dv}{v}  \; \omega'(v) \; \omega\left( \frac{\tilde{t}}{v}\right) \,,
\eeq
where $C$ is a non-contractible loop in the punctured $v$-plane. We have the following:
\begin{proposition}
\label{prop:period_integral_general_producttwist}
Let $\omega(t)$ (and $\omega'(t)$) be a period integral for the rational elliptic surface $X \to \mathbb{P}^1$ (resp.~$X' \to \mathbb{P}^1$ with a singular fiber of type $IV^*$, $III^*$, or $II^*$ at $t=\infty$) with absolutely convergent series $\omega(t) = \sum_{k\ge 0} f_k t^k$ (resp.~$\omega'(t) = \sum_{k\ge 0} f'_k t^k$) for $|t|<1$. Let $C_{1/2}(0)$ be the circle  $|v|=\frac{1}{2}$ in the $v$-plane with counterclockwise orientation. For every $\tilde{t}\in \mathbb{C}$ with $|\tilde{t}|<1/2$, the period integral~(\ref{eqn:period_new_producttwist}) for $C=C_{1/2}(0)$ has an absolutely convergent series given by the Hadamard product
\beq
\label{eq:period_new_producttwist}
\begin{split}
 \tilde{\omega}(\tilde{t}) = (2\pi i) \; \omega'(\tilde{t}) \star  \omega(\tilde{t}) \,.
\end{split}
\eeq
\end{proposition}
\begin{proof} 
The proof is analogous to the proof of Proposition~\ref{prop:period_integral_general}. For $C=C_{1/2}(0)$ a simple residue computation in Equation~(\ref{eqn:period_new_producttwist}) yields
\beqn
  \tilde{\omega}(\tilde{t}) = \sum_{m,n} f'_m  f_n t^n \, \oint_{C} \frac{dv}{v^{1-m+n}} = (2\pi i) \;  \sum_{m} f'_m \, f_m t^m \,,
\eeqn
\end{proof}
 \section{Modular elliptic families and related families}
 \label{sec:modular}
In this section, we will show that all extremal, modular elliptic families of elliptic curves or K3 surfaces with three singular fibers are in fact twisted families for some generalized functional invariant $(i, j, \alpha)$.  In our iterative construction, the universal starting point is always the same pencil $\pi: X \to B$ of `dimension zero' Calabi-Yau manifolds, namely the ramified family of pairs of points  encountered before $\{\pm y\} \in X_t=\pi^{-1}(t)$ given by
\beq
\label{legendre0b}
 y^2 = 1 - t 
\eeq
with $t \in \mathbb{C}$. For the family~(\ref{legendre0b}), we define $\Sigma_0(t)$ to be the point $t$ for $t \in B$, take the branch cut  branch cut along $\lbrace \, t \, | \, 1\le t \le \infty\rbrace$, and consider the holomorphic $0$-form $1/y_0$ with the period
\beq
 \int_{\Sigma_0(t)} \frac{2}{y_0} = \frac{1}{y_0} =  {}_1F_0\!\left(\left. \frac{1}{2} \right| t \right)  = \big( 1- t \big)^{-\frac{1}{2}}\;,
\eeq
which is a solution to the hypergeometric differential equation $L^{(1)}_t(\frac{1}{2};) \, \omega(t)=0$. 
\subsection{Extremal families of elliptic curves}
\label{ssec:ec_families}
Applying our twist construction to the family~(\ref{legendre0b}), we obtain families of one-dimensional Calabi-Yau manifolds, namely modular families of elliptic curves. We have the following:
\begin{lemma}
\label{lemmaECmixed}
For $(i, j, \alpha)$ with $1\le i \le 2$, $1 \le j \le 2\, \alpha$ and $\alpha \in \lbrace \frac{1}{2}, 1\rbrace$, the twisted families with generalized functional invariant $(i, j, \alpha)$ given by
\beq
\label{family_of_curves}
 \tilde{y}^2 = \left( 1 - \dfrac{ c_{ij}\tilde{t}}{\tilde{x}^i (\tilde{x}+1)^j} \right) \, \tilde{x}^2 \, (\tilde{x}+1)^{2\alpha} \;,
\eeq
are families of genus-one curves. For $(i, j, \alpha)=(1,1,1)$, $(2,1,1)$, $(1,1,\frac{1}{2})$, and $(2,1,\frac{1}{2})$, the families admit a $\mathbb{C}(\tilde{t})$-rational Weierstrass point. The corresponding families of elliptic curves are rational elliptic surfaces with singular fibers given in Table \ref{tab:twist_params}; the explicit Weierstrass models and Mordell-Weil groups are given in Table \ref{tab:3ExtRatHg}.
\end{lemma}
\begin{proof}
The result follows by explicit computation.
\end{proof}
\begin{remark}
The points of maximal unipotent monodromy for an elliptic surface comprise the support of singular fibers of type $I_n$ for $n \ge 1$ \cite{MR1779161}*{Cor.~1}.
\end{remark}
\begin{remark}
The names of the Jacobian elliptic surfaces used in Table \ref{tab:twist_params} and Table~\ref{tab:3ExtRatHg} coincide with the classical notation used by Herfurtner \cite{MR1129371}. 
\end{remark}
\begin{table}[H]
\scalebox{\ScaleBig}{
\begin{tabular}{|l|l|r|c|}
\hline
 &&&\\[-0.9em]
$(i, j, \alpha)$ & $\mu$ & singular fibers & notation\\[0.2em]
\hline
\hline
&&& \\[-0.9em]
$(1,1,1)$ & $\frac{1}{2}$ & $I_1^* (\tilde{t}=\infty), \; I_4 (\tilde{t}=0), \;  I_1 (\tilde{t}=1) $ & $X_{141}$\\[0.4em]
$(2,1,1)$ & $\frac{1}{3}$ & $IV^*  (\tilde{t}=\infty), \; I_3 (\tilde{t}=0), \;  I_1 (\tilde{t}=1) $ & $X_{431}$\\[0.4em]
$(1,1,\frac{1}{2}) $ & $\frac{1}{4}$ & $ III^* (\tilde{t}=\infty), \; I_2 (\tilde{t}=0), \;  I_1 (\tilde{t}=1)$ & $X_{321}$ \\[0.4em]
$\left(2,1,\frac{1}{2}\right)$ & $\frac{1}{6}$ & $II^*  (\tilde{t}=\infty), \; I_1 (\tilde{t}=0), \;   I_1 (\tilde{t}=1)$ & $X_{211}$ \\[0.4em]
\hline
\end{tabular}}
\caption{Families of elliptic curves}\label{tab:twist_params}
\end{table}
\begin{remark}
Conditions~(\ref{cond:normal}) allow for a fifth case with $(i, j, \alpha)=(2,2,1)$. In this remaining case, the twisted family with generalized functional invariant $(i, j, \alpha)=(2,2,1)$ is a family of genus-one curves whose relative Jacobians coincide with the elliptic curves obtained for the twisted family with $(i, j, \alpha)=(1,1,\frac{1}{2})$.
\end{remark}
\begin{table}[H]
\scalebox{\Scale}{
\begin{tabular}{|c|c|lcl|c|c|c|c|} \hline
name & $G$& \multicolumn{3}{|c|}{$g_2, g_3, \Delta, J$} & \multicolumn{4}{|c|}{ramification of $J$ and singular fibers} \\[0.2em]
\cline{2-9} 
& $\operatorname{MW}(\pi,\sigma)$ &  \multicolumn{3}{|c|}{sections}  & $t$ & $J$ & $m(J)$ & fiber   \\
\hline
\hline
&&&&&&&& \\[-0.9em]
$X_{141}$            & $\Gamma_0(4)$ 
  & $g_2$     & $=$ & $\frac{1}{3}\left(4 t^ 2 -64 \, t+ 64 \right)$                      	& $8 \pm 4 \sqrt{3}$ 			                          & $0$       & $3$ & smooth  \\[0.4em]
$\mu=\frac{1}{2}$ & 
  & $g_3$     & $=$ & $\frac{8}{27}\big(2 -t\big) \big(32 -32 t -t^2\big)$   		& $-16\pm12 \sqrt{2}, 2$				        & $1$       & $2$ & smooth  \\[0.4em]
&& $\Delta$ & $=$ & $-256 \, t^4 \, (t-1)$    							& $0$								& $\infty$ & $4$ & $I_4 \;\; (A_3)$     \\[0.0em]
&& $J$         & $=$ & $-\frac{(t^2-16 t+16)^3}{108 t^4 (t-1)}$       			& $1$                                        				& $\infty$ & $1$ & $I_1$     \\[0.3em]
\cline{2-5}
&&&&&&&& \\[-0.9em]			   			   
&$\mathbb{Z}/4\mathbb{Z}$& $(X,Y)_1$  &  $=$ & $(\frac{2}{3}t-\frac{4}{3},0)$ 		 		  	& $\infty$								& $\infty$ & $1$ & $I_1^* \; (D_5)$ \\[0.4em]
&& $(X,Y)_{2,3}$  &  $=$ & $(-\frac{1}{3}t-\frac{4}{3},\pm4\,i\,t)$ 			& &&&\\[0.4em]
\hline
&&&&&&&& \\[-0.9em]
$X_{431}$            & $\Gamma_0(3)$ 
  & $g_2$     & $=$ & $27 - 24 \, t$                      						  & $\infty$	                                                  & $0$       & $1$ & $IV^* \; \; (E_6)$  \\[0.4em]
$\mu=\frac{1}{3}$ & 
  & $g_3$     & $=$ & $27-36 \, t + 8 \, t^2$  							  & $\frac{9}{8}$ 						& $0$      & $3$ &  smooth \\[0.4em]
&& $\Delta$ & $=$ & $-1728 \, t^3 \, (t-1)$   							  & $\frac{9}{4}\pm\frac{3\sqrt{3}}{4}$			& $1$      & $2$ &  smooth    \\[0.1em]
&& $J$         & $=$ & $\frac{(-9+8 \, t)^3}{64 \, t^3 (t-1)}$                                    & $0$                                        			& $\infty$ & $3$ & $I_3 \; \; (A_2)$     \\[0.3em]
\cline{2-5}
&&&&&&&& \\[-0.9em]			   			   
&$\mathbb{Z}/3\mathbb{Z}$& $(X,Y)_{1,2}$  &  $=$ & $(-\frac{3}{2},\pm 2\sqrt{2}\, i \,t)$ 			 	  & $1$								& $\infty$ & $1$ & $I_1$ \\[0.4em]
\hline
&&&&&&&& \\[-0.9em]
$X_{321}$            & $\Gamma_0(2)$ 
  & $g_2$     & $=$ & $\frac{16}{3}- 4 \, t$                      			  		  & $\frac{4}{3}$					                & $0$       & $3$ & smooth  \\[0.4em]
$\mu=\frac{1}{4}$ & 
  & $g_3$     & $=$ & $-\frac{64}{27}+\frac{8}{3} \, t$    					  & $\frac{8}{9}$    						& $1$       & $2$ & smooth  \\[0.4em]
&& $\Delta$ & $=$ & $- 64 \, t^2 \, (t-1)$    							  & $\infty$							& $1$ 	& $1$ & $III^*\; \; (E_7)$     \\[0.0em]
&& $J$         & $=$ & $\frac{(- 4 + 3 t)^3}{27 \, t^2 (t-1)}$      	   	  		  & $0$                                        			& $\infty$ & $2$ & $I_2 \; \; (A_1)$     \\[0.4em]
\cline{2-5}
&&&&&&&& \\[-0.9em]			   			   
&$\mathbb{Z}/2\mathbb{Z}$& $(X,Y) $  &  $=$ & $(\frac{2}{3},0)$ 						 		  & $1$								& $\infty$ & $1$ & $I_1 $ \\[0.4em]
\hline
&&&&&&&& \\[-0.9em]
$X_{211}$            &  $\Gamma_0(1)$
  & $g_2$     & $=$ & $3$                      			  		  			  & $\infty$					                & $0$        & $2$ & $II^*\; \; (E_8)$  \\[0.4em]
$\mu=\frac{1}{6}$ & 
  & $g_3$     & $=$ & $-1 + 2 \, t$    						  			  & $\frac{1}{2}$    						& $1$       & $2$ & smooth  \\[0.4em]
&& $\Delta$ & $=$ & $- 108 \, t \, (t-1)$    							  & $0$								& $\infty$ 	& $1$ &  $I_1$    \\[0.0em]
\cline{2-2}
&&&&&&&& \\[-0.9em]			   			   
&$\lbrace \mathbb{I} \rbrace$& $J$         & $=$ & $- \frac{1}{4 \, t \, (t-1)}$      	   	  				  & $1$                                        			& $\infty$  & $1$ &  $I_1$     \\[0.4em]
\hline
\end{tabular}}
\caption{Extremal rational fibrations}\label{tab:3ExtRatHg}
\end{table}
A Jacobian elliptic surfaces is called \emph{extremal} if the rank of its Mordell-Weil group of sections vanishes. We have the following:
\begin{corollary} 
\label{cor:extremal}
The elliptic surfaces in Table \ref{tab:twist_params} and Table \ref{tab:3ExtRatHg} constitute all extremal families of elliptic curves with three singular fibers and rational total space  (up to quadratic twist and two-isogeny). Moreover, $\tilde{t}=0$ is a point of maximal unipotent monodromy for each family.
\end{corollary}
\begin{proof}
The extremal families of elliptic curves with three singular fibers were classified in \cite{MR867347}*{Tab.~5.2}. The surface $X_{411}$ in \cite{MR867347} is obtained from $X_{141}$ by quadratic twist. Similarly, $X_{222}$ is obtained from $X_{411}$ by fiberwise two-isogeny. The point of maximal unipotent monodromy in the base curve of the elliptic surfaces is the support of the singular fiber of type $I_n$  for $n \ge 1$ \cite{MR1779161}*{Cor.~1}. 
\end{proof}
Application of results in \cite{MR0265365} yields the following:
\begin{corollary}
\label{cor:modular_rational}
The families of elliptic curves in Table \ref{tab:twist_params} with fiber of type $I_n$ for $n=2, 3, 4$ and $n=1$ are the universal families of elliptic curves over the genus-zero modular curves for the congruence subgroups $\Gamma_0(n)$, and for $n=1$ for the unique normal subgroup of index $2$ in $\mathrm{PSL}(2,\mathbb{Z})$ denoted by $\Gamma_0(1)$. The family parameter $\tilde{t}$ is the corresponding Hauptmodul of the modular curve.  
\end{corollary}
\qed
\par The twist construction also provides us, near $t=0$, with a family of non-contractible one-cycles for each family in Lemma~\ref{lemmaECmixed}. We have:
\begin{lemma}
\label{lemma_cycle}
For $\tilde{t}\in \mathbb{C}$ with $|\tilde{t}|<1/(2^{i+j+1}|c_{i j}|)$, the circle $C=C_{1/2}(0)$, given by $|\tilde{x}|=\frac{1}{2}$ in the $\tilde{x}$-plane with counterclockwise orientation, determines a family of non-contractible A-cycles $\Sigma_1(\tilde{t})$ for each family of genus-one curves in Lemma~\ref{lemmaECmixed}.
\end{lemma}
\begin{proof}
For $|\tilde{t}|<1/(2^{i+j+1}|c_{i j}|)$, each family $\tilde{X} \to \mathbb{P}^1 \ni \tilde{t}$ of genus-one curves in Lemma~\ref{lemmaECmixed} has two branch points with $|\tilde{x}|<\frac{1}{2}$ and two branch points with $|\tilde{x}|>\frac{1}{2}$. Therefore, there is a non-contractible one-cycle $\Sigma_1(\tilde{t}) \subset \tilde{X}_{\tilde{t}}$ that projects onto the circle $C=C_{1/2}(0)$, i.e., $|\tilde{x}|=\frac{1}{2}$ with counterclockwise orientation, and varies continuously for $|\tilde{t}|<1/(2^{i+j+1}|c_{i j}|)$. At $\tilde{t}=0$, the two branch points inside $C$ coalesce, and $\Sigma_1(\tilde{t})$ constitutes a family of A-cycles.
\end{proof}
We then have the following:
\begin{corollary}
\label{curve_reduction}
For the twisted families in Lemma~\ref{lemmaECmixed} with generalized functional invariant $(i, j, \alpha)=(1,1,1)$, $(2,1,1)$, $(1,1,\frac{1}{2})$, or $(2,1,\frac{1}{2})$, the period integral~(\ref{eqn:period_new}) is annihilated by the Picard-Fuchs operator $L^{(2)}_{\tilde{t}}\big((\mu, 1-\mu);(1)\big)$. In particular, the period over $\Sigma_1(\tilde{t})$ is holomorphic at $\tilde{t}=0$ and given by
\beq
\label{rk2hgf}
\omega(\tilde{t})= (2 \pi i) \;\hpg21{ \mu, 1-\mu}{1}{\tilde{t}} 
\eeq
with $\mu=\frac{\alpha}{i+j}$. 
\end{corollary}
\begin{proof}
The proof follows by application of Proposition~\ref{prop:period_integral_general} for the period integral $\omega(t)= {}_1F_0(\frac{1}{2}|t)$ and generalized functional invariant $(i, j, \alpha)$.
\end{proof}
\subsection{Families of  \texorpdfstring{$M_n$}{Mn}-lattice polarized K3 surfaces}
\label{ssec:Mn-polarized}
The procedure described in Section~\ref{sssec:connection_with_qtwist} allows us to construct the quadratic twists of the rational elliptic surfaces in Lemma~\ref{lemmaECmixed}. The four resulting families of K3 surfaces realize precisely the families of K3 surfaces considered by Hoyt in \cite{MR894512}. These K3 surfaces are not modular elliptic surfaces, but rather rational covers of them. To obtain modular elliptic K3 surfaces, we consider the twisted families with generalized functional invariant $(i, j, \alpha)=(1,1,1)$ of the Jacobian elliptic surfaces in Lemma~\ref{lemmaECmixed} instead. We denote by $M_n$ the lattices $M_n = H \oplus E_8 \oplus E_8 \oplus \langle -2n \rangle$ for $n\in \mathbb{N}$. We have the following:
\begin{lemma}
\label{lemmaK3mixed}
For $(n,\mu) \in \left\lbrace (1,\frac{1}{6}), (2, \frac{1}{4}), (3,\frac{1}{3}), (4,\frac{1}{2})\right\rbrace$, the twisted families with generalized functional invariant $(i,j,\alpha)=(1,1,1)$ given by
\beq
\label{lemmaK3mixed_eq}
 Y^2 = 4 \, X^3 - \underbrace{g_2\left( -\frac{t}{4u(u+1)} \right) \, \Big(u \, (u+1) \Big)^4}_{=: \,g_2(t,u)} \, X - \underbrace{g_3\left( -\frac{t}{4u(u+1)} \right) \, \Big(u \, (u+1) \Big)^6}_{=: \,g_3(t,u)} \,,
\eeq
with $g_2(t)$ and $g_3(t)$ determined by $(n,\mu)$ in Table~\ref{tab:3ExtRatHg}, define families of Jacobian elliptic K3 surfaces of Picard rank 19 with two singular fibers of Kodaira-type $II^*, III^*, IV^*$, or $I_1^*$ at $u=0$ and $u=-1$,  a fiber of Kodaira-type $I_{2n}$ at $u=\infty$, and two singular fibers of type $I_1$ at $u=-1/2 \pm \sqrt{1-t}/2$.
The families are families of $M_n$-lattice polarized K3 surfaces for $n=1, \dots, 4$.
\end{lemma}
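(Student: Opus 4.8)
The plan is to mimic the proof of Lemma~\ref{lemmaK3pure}: read off the Kodaira fibers of the Weierstrass model~(\ref{lemmaK3mixed_eq}) from its discriminant, confirm the Calabi--Yau condition, and then identify the N\'eron--Severi lattice by comparison with the Shimada classification~\cite{MR1813537}. First I would record that, writing $s$ for the base coordinate of the rational surface of Lemma~\ref{lemma1} and $w=u(u+1)$, the substitution $s\mapsto -t/(4w)$ together with the twist by $w^2$ sends the discriminant $\Delta(s)=C\,s^n(s-1)$ of Table~\ref{tab:all_rationals} to
\begin{equation*}
 \Delta_{K3}(u) = w^{12}\,\Delta\!\left(-\tfrac{t}{4w}\right) = C'\,u^{11-n}(u+1)^{11-n}\,(u-u_+)(u-u_-)\;,
\end{equation*}
where $u_\pm=\tfrac12(-1\pm\sqrt{1-t})$ are the two preimages of the $I_1$ fiber located over $s=1$. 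Likewise $G_2=\sum_k a_k(-t/4)^k w^{4-k}$ and $G_3=\sum_k b_k(-t/4)^k w^{6-k}$ are honest polynomials of degrees exactly $8$ and $12$ in $u$, so $\operatorname{ord}_\infty G_2=\operatorname{ord}_\infty G_3=0$ while $\operatorname{ord}_\infty\Delta_{K3}=24-\deg_u\Delta_{K3}=2n$. This already forces $\mathcal{L}=\mathcal{O}_{\mathbb{P}^1}(-2)$ in the canonical bundle formula~(\ref{canonical}) and shows the fiber over $u=\infty$ is of multiplicative type $I_{2n}$.

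Next I would determine the fibers over $u=0,-1,u_\pm$. Since $\deg_s g_2=4-\operatorname{ord}_\infty g_2$ and $\deg_s g_3=6-\operatorname{ord}_\infty g_3$, the lowest-order terms of $G_2,G_3$ at $u=0$ give $\operatorname{ord}_0 G_2=\operatorname{ord}_\infty g_2$ and $\operatorname{ord}_0 G_3=\operatorname{ord}_\infty g_3$, which are precisely the vanishing orders producing the fiber of Kodaira-type $II^*,III^*,IV^*,I_1^*$ sitting over $s=\infty$ on the rational surface. The one point that needs care --- and which I expect to be the first real obstacle --- is that for $n=4$ the naive estimate $\operatorname{ord}_0\Delta_{K3}\ge 6$ is not sharp: the leading $w^6$ coefficients of $G_2^3$ and $27\,G_3^2$ coincide and cancel, pushing $\operatorname{ord}_0\Delta_{K3}$ up to $7$ and thereby upgrading the fiber from $I_0^*$ to $I_1^*$. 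One must verify this cancellation explicitly from the leading coefficients in Table~\ref{tab:all_rationals}; for $n=1,2,3$ the type is instead pinned directly by $\min(3\operatorname{ord}G_2,2\operatorname{ord}G_3)$. The invariance of $w=u(u+1)$ under $u\mapsto -1-u$ then yields the same fiber over $u=-1$, and the two simple zeros $u_\pm$ of $\Delta_{K3}$, at which $G_2,G_3$ do not vanish, carry $I_1$ fibers. Since $(\operatorname{ord}G_2,\operatorname{ord}G_3)$ never reaches $(\ge 4,\ge 6)$ the model is minimal; summing Euler numbers, $2(11-n)+2n+2=24$ in every case, reconfirms that the minimal resolution is a K3 surface with only rational double points.

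With the fiber configuration in hand, Shioda--Tate gives the Picard number: the non-identity fiber components span a root lattice of rank $17$, so $\rho=2+17+\operatorname{rk}\operatorname{MW}=19+\operatorname{rk}\operatorname{MW}$. Matching the configuration (two of $II^*/III^*/IV^*/I_1^*$, one $I_{2n}$, two $I_1$) against Shimada's list selects the unique entry with this fiber data, from which $\operatorname{rk}\operatorname{MW}=0$ and hence $\rho=19$; this is corroborated by the fact that the generic fiber supports a genuine rank-three transcendental lattice, the period of $du\wedge dX/Y$ satisfying the order-three Picard--Fuchs equation produced by the Euler transform as in~(\ref{EulerTransfo}), so that $\rho\le 22-3=19$.

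The last and genuinely subtle step is to identify the lattice, not merely its rank. For $n=1$ the fiber lattice is already $E_8\oplus E_8\oplus A_1$, so $\operatorname{NS}=H\oplus E_8\oplus E_8\oplus\langle -2\rangle=M_1$ outright. For $n=2,3,4$ the fiber root lattice --- ultimately $D_5\oplus D_5\oplus A_7$ when $n=4$ --- is \emph{not} literally $E_8\oplus E_8\oplus\langle -2n\rangle$; the two agree only after passing to the finite-index overlattice generated by the torsion sections (compare Table~\ref{tab:torsions2}), whose index is forced to be $\sqrt{128/(2n)}$ so as to make $|\!\operatorname{disc}\operatorname{NS}|=2n$. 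Since $\operatorname{rk}M_n=\operatorname{rk}\operatorname{NS}=19$, a primitive embedding $M_n\hookrightarrow\operatorname{NS}$ is an isometry, so it suffices to check that the even indefinite lattice $\operatorname{NS}$ of signature $(1,18)$ carries the same discriminant form as $M_n$, whereupon Nikulin's uniqueness theorem gives $\operatorname{NS}\cong M_n$; equivalently the rank-three orthogonal complement is the transcendental lattice $H\oplus\langle 2n\rangle$. I expect this lattice bookkeeping --- reconciling the explicit fiber and torsion data with the abstract lattice $M_n$ --- to be the main obstacle, the Tate-algorithm fiber computations (modulo the cancellation above) being otherwise routine.
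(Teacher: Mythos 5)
Your proposal is correct, and on the fiber-type analysis it is essentially the paper's proof made explicit: the paper's argument is a terse reduction to ``checking the Kodaira-types of singular fibers from $G_2, G_3, \Delta$ and comparing with the Shimada classification \cite{MR1813537}'', which is exactly your discriminant factorization $\Delta_{K3}=C\,t^n\,u^{11-n}(u+1)^{11-n}(u-u_+)(u-u_-)$ together with the vanishing orders of $G_2,G_3$ at $u=0,-1,\infty$. (Your flagged ``obstacle'' at $n=4$ is already dissolved by that factorization: the order $7$ at $u=0$ is automatic, reflecting that the $I_1^*$ fiber of $X_{141}$ over $s=\infty$ is simply being pulled back, so no separate cancellation check is needed.) Where you genuinely diverge is the lattice identification. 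The paper does not argue this intrinsically: it cites Dolgachev \cite{MR1420220}, who had already identified elliptic fibrations with precisely this configuration and Mordell--Weil data as the $M_n$-polarized families (cf.\ Corollary~\ref{MES+}), and it separately records that the full $\mathbb{Z}/n\mathbb{Z}$ torsion of the rational elliptic surfaces survives the mixed twist (Table~\ref{tab:torsions3}). Your route --- Shioda--Tate plus the order-three Picard--Fuchs bound to pin $\rho=19$ and $\operatorname{rk}\operatorname{MW}=0$, then the index-$n$ overlattice generated by torsion sections, then discriminant forms and Nikulin uniqueness --- is self-contained and independent of Dolgachev's identification, at the cost of the discriminant-form computation you defer as ``bookkeeping''; the paper's citation is shorter but leaves the reader to match data against two external classifications. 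Two small repairs to your write-up: the table you want is Table~\ref{tab:torsions3}, not Table~\ref{tab:torsions2} (the latter is the pure-twist case, where only two-torsion survives --- exactly the distinction that makes the mixed twist necessary here), and your sentence ``whose index is forced to be $\sqrt{128/(2n)}$ so as to make $|\operatorname{disc}\operatorname{NS}|=2n$'' runs the logic backwards: the index equals $n$ because the inherited $n$-torsion sections exist (Shioda--Tate), and this then \emph{yields} $|\operatorname{disc}\operatorname{NS}|=2n$, which is what feeds into the Nikulin step rather than being assumed.
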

\begin{proof}
The proof amounts to checking the Kodaira-types of singular fibers from $G_2, G_3, \Delta$ and comparing with the list of all Jacobian elliptic surfaces
given in the Shimada classification \cite{MR1813537} of Jacobian elliptic K3 surfaces. The fact that the constructed families of K3 surfaces are $M_n$-polarized 
was explained by Dolgachev in \cite{MR1420220}. Moreover, it is easy to see that all torsion sections of the rational elliptic surfaces
survive the mixed-twist construction.  The torsion sections are listed in Table~\ref{tab:torsions3}.
\end{proof}
 \begin{table}[H]
 \scalebox{\ScaleBig}{
 \begin{tabular}{|c|c|lcl|}
 \hline
 &&&&\\[-0.9em]
 $\Lambda$ & torsion &  \multicolumn{3}{|c|}{sections} \\[0.2em]
  \hline
  \hline
  &&&&\\[-0.9em]
 $M_4$ & $[4]$ & $(X,Y)_1$ & $=$ &$\left(-\frac{1}{6}\, \left( u+1 \right) u \left( 8\,{u}^{2}+t+8\,u \right) ,0\right)$\\[0.2em]
 && $(X,Y)_{2,3}$ & $=$ &$\left( \frac{1}{12}\, \left( u+1 \right) u \left( -16\,{u}^{2}+t-16\,u
 \right) ,\pm it{u}^{2} \left( u+1 \right) ^{2}\right)$\\[0.2em]
 \hline
   &&&&\\[-0.9em]
 $M_3$ & $[3]$ & $(X,Y)_{1,2}$ & $=$ &$\left(-\frac{3}{2}\,{u}^{2} \left( u+1 \right) ^{2},\pm \frac{i}{2}\sqrt {2}t{u}^{2} \left( u+1 \right) ^{2} \right)$\\[0.2em]
\hline
   &&&&\\[-0.9em]
 $M_2$ & $[2]$ & $(X,Y)$ & $=$ &$\left(\frac{2}{3}\,{u}^{2} \left( u+1 \right) ^{2},0\right)$\\[0.2em]
\hline
  &&&&\\[-0.9em]
 $M_1$ & $[1]$ & $-$ &  &\\[0.2em]
\hline
  \end{tabular}}
\caption{Torsions sections of Equation~(\ref{lemmaK3mixed_eq})}\label{tab:torsions3}
\end{table}
The configurations of singular fibers, the Mordell-Weil groups, the determinants of the discriminant groups, and the lattice polarizations are summarized in Table \ref{tab:Twists_3ExtRatHg1}.
\begin{table}[H]
\scalebox{\ScaleBig}{
\begin{tabular}{|c|c|c|ccc|c|c|c|} \hline
\multicolumn{2}{|c|}{derived from}& $\rho$ & \multicolumn{3}{|c|}{Configuration of singular fibers} & $\operatorname{MW}(\pi,\sigma)$ & $\operatorname{disc}Q$ & $\Lambda$\\
\cline{1-2}\cline{4-6}
 &&&&&&&& \\[-1em]
Srfc & $\mu$ && $u=\infty$ & $u^2+u+t/4$ & $u=0,-1$ &&&\\
 \hline
 \hline
 &&&&&&&& \\[-0.8em]
 $X_{141}$ & $\frac{1}{2}$ & $19$ & $I_8 \; (A_7)$ 		& $2 \, I_1$ 	& $2 \, I_1^* \; (2\, D_5)$  	& $[4]$ & $2^3$   & $M_4$\\[0.4em]
\hline 
&&&&&&&& \\[-0.8em]
$X_{431}$ & $\frac{1}{3}$ & $19$ & $I_6 \; (A_5)$ 	& $2 \, I_1$ 	& $2 \, IV^* \; (2\, E_6)$ 		& $[3]$ & $2\cdot3$ & $M_3$\\[0.4em]
\hline
&&&&&&&& \\[-0.8em]
$X_{321}$ & $\frac{1}{4}$ & $19$ & $I_4 \; (A_3)$ 	& $2 \, I_1$ 	& $2 \, III^* \; (2\, E_7)$ 		& $[2]$ & $2^2$ &  $M_2$\\[0.4em]
\hline
&&&&&&&& \\[-0.8em]
$X_{211}$ & $\frac{1}{6}$ & $19$ & $I_2 \; (A_1)$ 	& $2 \, I_1$ 	& $2 \, II^* \; (2\, E_8)$ 		& $[1]$ & $2$  &$M_1$\\[0.4em]
\hline 
\end{tabular}}
\caption{K3 surfaces from extremal rational surfaces}\label{tab:Twists_3ExtRatHg1}
\end{table}
\par Let us denote by $\Gamma_0(n)^+$ the modular group $\Gamma_0(n)$ extended by the Fricke involution, i.e., the element corresponding to $\tau \mapsto -1/(n\tau)$. 
It is a classical result due to Dolgachev that the moduli spaces of pseudo-ample $M_n$-polarized K3 surfaces are isomorphic to the rational modular curves that are the
compactification of the curves $\mathbb{H}/\Gamma_0(n)^+$  \cite{MR1420220}.
We therefore have the following:
\begin{corollary}
\label{MES+}
The twisted families with generalized functional invariant $(1,1,1)$ of the families of elliptic curves in Lemma~\ref{lemmaECmixed} are families of $M_n$-lattice polarized K3 surfaces over the rational modular curves $\mathbb{H}/\Gamma_0(n)^+$ for $n=1, 2, 3, 4$.
\end{corollary}
\begin{proof}
The proof follows directly by checking that the singular fibers and Mordell-Weil groups for the families constructed in Lemma~\ref{lemmaK3mixed}
agree with the ones given by Dolgachev in \cite{MR1420220}.
\end{proof}
\par For each twisted family in Equation~(\ref{lemmaK3mixed_eq}), we define a family of closed two-cycles $\Sigma_2(t)$ as follows: for $t \in \mathbb{C}$ with $|t|<1/2$ let $C=C_{1/2}(0)$ be the circle $|u|=\frac{1}{2}$ in the $u$-plane with counterclockwise orientation. For every $u \in C$, a cycle $\Sigma_1'(t, u)$ in the elliptic fiber is obtained from $\Sigma_1(-\frac{t}{4u(u+1)})$ -- where $\Sigma_1(t)$ was defined in Lemma~\ref{lemma_cycle} -- by rescaling $(X,Y) \to (u^2 (u+1)^2 X, u^3 (u+1)^3 Y)$. For $t\in \mathbb{C}$ with $|t|<1/2$, we obtain a continuously varying family of closed two-cycles as a warped product $\Sigma_2(t) = C \times_u \Sigma'_1(t, u)$.  By warped product we mean that the cycle $\Sigma_1(-\frac{t}{4u(u+1)})$ is warped, i.e. it is rescaled by a function of the affine coordinate $u$. We then have the following:
\begin{corollary}
\label{cor:modular_K3_surfaces}
For the twisted families with generalized functional invariant $(1,1,1)$ in Lemma~\ref{lemmaK3mixed}, the period integral~(\ref{eqn:period_new}) is annihilated by the Picard-Fuchs operator $L^{(3)}_{t}((\mu, 1/2, 1-\mu);(1,1))$. In particular, the period over $\Sigma_2(t)$ is holomorphic at $t=0$ and given by
\beq
\label{period_lemmaK3pure}
  \omega  = (2 \pi i)^2 \;  \hpg32{ \mu, \frac{1}{2}, 1-\mu}{1, 1}{t}  \,.
\eeq
\end{corollary}
\begin{proof}
Application of Proposition~\ref{prop:period_integral_general} for the period integral $\omega(t)= {}_2F_1\big( \mu , 1-\mu; 1  \big| \, t \big)$ and the twisted families of Weierstrass models in Equation~(\ref{lemmaK3mixed_eq}) yields the following formula
\beq
  {}_1F_0\!\left(\left. \frac{1}{2} \right| t \right)  \star  \hpg21{\mu, 1-\mu}{1}{t}  =   \hpg32{ \mu, \frac{1}{2}, 1-\mu}{1, 1}{t}   \,.
\eeq
\end{proof}
\par By Clausen's identity each period in Corollary~\ref{cor:modular_K3_surfaces} is a perfect square, namely
\beq
\label{Clausen}
   \hpg32{ \mu, \frac{1}{2}, 1-\mu}{1, 1}{t} = \left[  \, \hpg21{\frac{\mu}{2} , \frac{1-\mu}{2}}{1}{t} \, \right]^2 \,.
\eeq
Ratios of solutions of the hypergeometric differential equation with holomorphic solution ${}_2F_1\big( \frac{\mu}{2} , \frac{1-\mu}{2}; 1  \big| \, t \big)$ are so-called Schwarzian $s$-maps, and the corresponding triangle groups are the modular groups $\Gamma_0(n)^+$ in Corollary~\ref{MES+} and listed in Table \ref{tab:triangle}.
\begin{table}[H]
\scalebox{\ScaleBig}{
\begin{tabular}{|c|c|c|}
\hline
& &\\[-0.9em]
$n$ & $\mu$ & triangle group \\
\hline
\hline
&& \\[-0.9em]
$n$ & $\mu$ & $(2,\frac{2}{1-2\mu},\infty)$ \\[0.4em] 
$1$ & $\frac{1}{6}$ & $(2,3,\infty)$ \\[0.4em]
$2$ & $\frac{1}{4}$ & $(2,4,\infty)$ \\[0.4em]
$3$ & $\frac{1}{3}$ & $(2,6,\infty)$ \\[0.4em]
$4$ & $\frac{1}{2}$ & $(2,\infty,\infty)$\\[0.4em]
\hline
\end{tabular}}
\caption{Triangle groups}\label{tab:triangle}
\end{table}
The so-called \emph{Kummer identity} relates the hypergeometric function on the right hand side of~(\ref{Clausen}) back to the original period, i.e., for
$t=4 \, T \, (1-T)$ it follows
\beq
\label{rel_quad}
\hpg21{\frac{\mu}{2} , \frac{1-\mu}{2}}{1}{t}= \hpg21{\mu ,1-\mu}{1}{T} \,.
\eeq
\par The geometric origin of Equation~(\ref{Clausen}) and~(\ref{rel_quad}) is the fact that an $M_n$-polarized K3 surface admits a Shioda-Inose structure relating it to a Kummer surface associated with the product of two isogenous elliptic curves; see \cite{MR1779161}.
\subsection{Families of  K3 surfaces of Picard-rank 18}
\label{ssec:M-polarized}
The twisted families with generalized functional invariant $(1,1,1)$ of the elliptic surfaces in Lemma~\ref{lemmaECmixed} are restrictions of two-parameter families of K3 surfaces with affine parameters $a, b \in \mathbb{C}$, as explained in Section~\ref{sssec:2parameters}. The families of Section~\ref{ssec:Mn-polarized} are obtained for $a=0$. The restriction $a=-b$ yields different one-parameter families of K3 surfaces of Picard rank 18.  We denote the relevant rank-18 lattices by $M=H\oplus E_8 \oplus E_8$, $\tilde{M}=H\oplus E_7^{\oplus \, 2} \oplus A_1^{\oplus \,2}/\mathbb{Z}_2$, $M'=H\oplus E_6^{\oplus \, 2} \oplus A_2^{\oplus \,2}/\mathbb{Z}_3$, and $\tilde{M}'=H\oplus D_5^{\oplus \, 2} \oplus A_3^{\oplus \,2}/\mathbb{Z}_4$. We have the following:
\begin{lemma}
\label{lemmaK3mixed18}
For $(n,\mu) \in \left\lbrace (1,\frac{1}{6}), (2, \frac{1}{4}), (3,\frac{1}{3}), (4,\frac{1}{2})\right\rbrace$,  the twisted families with generalized functional invariant $(i,j,\alpha)=(1,1,1)$ and $a=-b=t$ given by
\beq
\label{lemmaK3mixed18_eq}
\begin{split}
 Y^2 = 4 \, X^3 &- \underbrace{g_2\left(t \, \Big( 1+\frac{1}{2\,u(u+1)} \Big) \right) \, \Big(u \, (u+1) \Big)^4}_{=: \,g_2(t,u)} X \\
 & - \underbrace{g_3\left( t \, \Big( 1+\frac{1}{2\,u(u+1)} \Big)  \right) \, \Big(u \, (u+1) \Big)^6}_{=: \,g_3(t,u)} \;,
 \end{split}
\eeq
with $g_2(t)$ and $g_3(t)$ determined by $(n,\mu)$ in Table~\ref{tab:3ExtRatHg}, define families of Jacobian elliptic K3 surfaces of Picard rank 18 with two singular fibers of Kodaira-type $II^*, III^*, IV^*$, or $I_1^*$ at $u=0$ and $u=-1$,  two fibers of Kodaira-type $I_{n}$ at $2u^2+2u+1=0$, and two singular fibers of type $I_1$ at the roots of $p(t, u)=2(t-1)u(u+1)+t$. The families are polarized K3 surfaces with lattice polarization $M$, $\tilde{M}$, $M'$, and $\tilde{M}'$ for $n=1, \dots, 4$.
\end{lemma}
\begin{proof}
The proof amounts to checking the Kodaira-types of singular fibers from $G_2, G_3, \Delta$ and comparing with the list of all Jacobian elliptic surfaces
given in the Shimada classification \cite{MR1813537} of Jacobian elliptic K3 surfaces. The torsion sections for the families in Equation~(\ref{lemmaK3mixed18_eq}) are listed in Table~\ref{tab:torsions5}.
\end{proof}
 \begin{table}[H]
 \scalebox{\ScaleBig}{
 \begin{tabular}{|c|c|lcl|}
 \hline
 &&&&\\[-0.9em]
 $\Lambda$ & torsion &  \multicolumn{3}{|c|}{sections} \\[0.2em]
  \hline
  \hline
  &&&&\\[-0.9em]
 $M_4$ & $[4]$ & $(X,Y)_1$ & $=$ &$\left(\frac{1}{3}\, \left( u+1 \right) u \left( 2\,t{u}^{2}+2\,tu-4\,{u}^{2}+t-4\,u \right) ,0\right)$\\[0.2em]
 && $(X,Y)_{2,3}$ & $=$ &$\left( -\frac{1}{6}\, \left( u+1 \right) u \left( 2\,t{u}^{2}+2\,tu+8\,{u}^{2}+t+8\,u \right) ,\right.$\\
 && &  &$\quad \left.\pm it \left( 2\,u+1+i \right)  \left( -2\,u-1+i \right) {u}^{2} \left( u+1 \right) ^{2}
\right)$\\[0.2em]
 \hline
   &&&&\\[-0.9em]
 $M_3$ & $[3]$ & $(X,Y)_{1,2}$ & $=$ &$\left(-\frac{3}{2}\,{u}^{2} \left( u+1 \right) ^{2}, \right.$\\
 && &  &$\quad \left.\pm \frac{i}{2}\sqrt {2}t\left( 2\,u+1+i \right)  \left( -2\,u-1+i \right) {u}^{2} \left( u+1 \right) ^{2}\right)$\\[0.2em]
\hline
   &&&&\\[-0.9em]
 $M_2$ & $[2]$ & $(X,Y)$ & $=$ &$\left(\frac{2}{3}\,{u}^{2} \left( u+1 \right) ^{2},0\right)$\\[0.2em]
\hline
  &&&&\\[-0.9em]
 $M_1$ & $[1]$ & $-$ &  &\\[0.2em]
\hline
  \end{tabular}}
\caption{Torsions sections of Equation~(\ref{lemmaK3mixed18_eq})}\label{tab:torsions5}
\end{table}
\par The configurations of singular fibers, the Mordell-Weil groups, the determinants of the discriminant groups, and lattice polarizations are summarized in Table \ref{tab:Twists_3ExtRatHg3}.
\begin{table}[H]
\scalebox{\ScaleBig}{
\begin{tabular}{|c|c|c|ccc|c|c|c|} \hline
\multicolumn{2}{|c|}{derived from} & $\rho$ & \multicolumn{3}{|c|}{Configuration of singular fibers} & $\operatorname{MW}(\pi,\sigma)$ & $\operatorname{disc}Q$ & $\Lambda$\\
\cline{3-6}
 &&&&&&&& \\[-1em]
Srfc & $\mu$ && $2 u^2+2 u+1$ & $p(t,u)=0$ & $u=0,-1$ &&&\\
 \hline
 \hline
 &&&&&&&& \\[-0.9em]
$X_{141}$ & $\frac{1}{2}$ & $18$ & $2 \, I_4 \; (2 \, A_3)$ 	& $2 \, I_1$ 	& $2 \, I_1^* \; (2\, D_5)$ 	& $[4]$ & $2^4$ & $\tilde{M}'$\\[0.4em]
\hline 
 &&&&&&&& \\[-0.9em]
$X_{431}$ & $\frac{1}{3}$ & $18$ & $2 \, I_3 \; (2 \, A_2)$ 	& $2 \, I_1$ 	& $2 \, IV^* \; (2\, E_6)$ 	& $[3]$ & $3^2$ & $M'$\\[0.4em]
\hline
 &&&&&&&& \\[-0.9em]
$X_{321}$ & $\frac{1}{4}$ & $18$ & $2 \, I_2 \; (2 \, A_1)$ 	& $2 \, I_1$ 	& $2 \, III^* \; (2\, E_7)$ 	& $[2]$ & $2^2$ & $\tilde{M}$\\[0.4em]
\hline
 &&&&&&&& \\[-0.9em]
$X_{211}$ & $\frac{1}{6}$ & $18$ & $2 \, I_1$ 	& $2 \, I_1$ 	& $2 \, II^* \; (2\, E_8)$ 			& $[1]$ & $1$ & $M$\\[0.4em]
\hline 
\end{tabular}}
\caption{K3 surfaces from extremal rational surfaces}\label{tab:Twists_3ExtRatHg3}
\end{table}
We make the following:
\begin{remark}
Families of lattice polarized K3 surfaces in Lemma~\ref{lemmaK3mixed18} are restrictions of the general two-parameter families introduced in Section~\ref{sssec:2parameters}. The two-parameter family of $M$-lattice polarized K3 surfaces $\tilde{X}$ admits a Shioda-Inose structure, relating it to Kummer surfaces $\hat{X}$ associated with two non-isogenous elliptic curves \cite{MR2369941}. In fact, the latter admit a Jacobian elliptic fibration with singular fibers $2 I_0^*, II^*, 2 I_1$. It turns out that the double cover induced by the degree-two map~(\ref{eqn:2parameter_map}) on the base $\mathbb{P}^1$ induces a Hodge isometry between transcendental lattices $T_{\tilde{X}}(2) \cong T_{\hat{X}}$; see Figure~\ref{fig_geom}. Thus, in the case of $M$-lattice polarized K3 surfaces $\tilde{X}$ the period domain is 
\beqn
\Big( \mathrm{PSL}(2,\mathbb{Z}) \times \mathrm{PSL}(2,\mathbb{Z})  \Big) \rtimes \mathbb{Z}/2\mathbb{Z} \backslash \Big( \mathbb{H} \times \mathbb{H}\Big) \,,
\eeqn
the moduli space of two elliptic curves, with a generator of $\mathbb{Z}/2\mathbb{Z}$ acting on $\mathrm{PSL}(2,\mathbb{Z}) \times \mathrm{PSL}(2,\mathbb{Z})$ by exchanging the two sides.
\end{remark}
\par For each twisted family with generalized functional invariant $(i,j,\alpha)=(1,1,1)$ and $a=-b=t$   in Equation~(\ref{lemmaK3mixed18_eq}), we define a family of closed two-cycles $\Sigma_2(t)$ as follows: for $t \in \mathbb{C}$ with $|t|<1/2$ let $C=C_{1/2}(0)$ be the circle given by $|u|=\frac{1}{2}$ in the $u$-plane with counterclockwise orientation. For every $u \in C$, a cycle $\Sigma_1'(t, u)$ in the elliptic fiber is obtained from $\Sigma_1(t ( 1+\frac{1}{2\,u(u+1)}))$ -- where $\Sigma_1(t)$ was defined in Lemma~\ref{lemma_cycle} -- by rescaling $(X,Y) \to (u^2 (u+1)^2 X, u^3  (u+1)^3 Y)$. For $t\in \mathbb{C}$ with $|t|<1/2$, we obtain a continuously varying family of closed two-cycles as a warped product $\Sigma_2(t) = C \times_u \Sigma'_1(t, u)$. We have the following:
\begin{corollary}
\label{cor:modular_K3_surfaces_18}
For the twisted families  in Lemma~\ref{lemmaK3mixed18} with generalized functional invariant $(i,j,\alpha)=(1,1,1)$ and $a=-b=t$, the period integral~(\ref{eqn:period_new}) is annihilated by the Picard-Fuchs operator  $L^{(4)}_{t^2}\big((\frac{\mu}{2}, \frac{1-\mu}{2}, \frac{1+\mu}{2});(1,1,\frac{1}{2})\big)$. In particular, the period over $\Sigma_2(t)$ is holomorphic at $t=0$ and given by
\beq
\label{period_lemmaK3pure_18}
  \omega  = (2 \pi  i)^2 \,  \hpg43{\frac{\mu}{2}, \frac{1-\mu}{2}, \frac{1+\mu}{2},  1-\frac{\mu}{2}}{ 1, 1, \frac{1}{2} }{t^2} \,.
\eeq
\end{corollary}
\begin{proof}
We apply Proposition~\ref{prop:period_integral_general_2} to the period integral $\omega(t)= {}_2F_1\big( \mu , 1-\mu; 1  \big| \, t \big)$ and the twisted families in Equation~(\ref{lemmaK3mixed_eq}), and use the identity 
\beq
\label{UneasyTwist}
  {}_1F_0\!\left(\left. \frac{1}{2} \right| t^2 \right)  \star  \hpg21{\mu, 1-\mu}{1}{t}  =   \hpg43{\frac{\mu}{2}, \frac{1-\mu}{2}, \frac{1+\mu}{2},  1-\frac{\mu}{2}}{ 1, 1, \frac{1}{2} }{t^2}  \,.
\eeq
\end{proof}
 \section{Elliptic fibrations on the mirror families}
 \label{sec:dwork}
Non-trivial generalized functional invariants can be used to analyze elliptic fibrations on the mirror families obtained from the Dwork pencil, i.e., the one-parameter family of deformed Fermat hypersurfaces in $\mathbb{P}^{n}=\mathbb{P}(X_0,\dots,X_n)$ given by
\beq
\label{Fermat}
 X_0^{n+1} + X_1^{n+1} + \dots + X_{n}^{n+1} + (n+1) \, \lambda \, X_0 X_1 \cdots X_{n} = 0 \,.
\eeq
For each integer $n\in \mathbb{N}$,  Equation~(\ref{Fermat}) constitutes a family of $(n-1)$-dimensional Calabi-Yau hypersurfaces $X^{(n-1)}_{\lambda}$. For $n=4$ Equation~(\ref{Fermat}) is the quintic family of Candelas et al.~\cite{MR1101784}. For the family~(\ref{Fermat}) the discrete group of symmetries for the Greene-Plesser orbifolding procedure is easily identified: it is generated by the action $(X_0,X_j) \mapsto (\zeta_{n+1}^n X_0, \zeta_{n+1} X_j)$ for $1 \le j \le n$ with $\zeta_{n+1}=\exp{(\frac{2\pi i}{n+1})}$.  In virtue of the fact that the product of all generators multiplies the homogeneous coordinates by a common phase,  the symmetry group is $G_{n-1}=(\mathbb{Z}/(n+1)\, \mathbb{Z})^{n-1}$. The new affine variables
\begin{small}\begin{gather*}
 t=\frac{(-1)^{n+1}}{\lambda^{n+1}} \,, \;  x_1 = \frac{X_1^n}{(n+1)\, X_0 \cdot X_2 \cdots X_n \, \lambda}  \,, \;
 x_2 =\frac{X_2^n}{(n+1)\, X_0 \cdot X_1 \cdot X_3 \cdots X_n \, \lambda} ,  \; \dots \; ,
 \end{gather*}\end{small}
are invariant under the action of $G_{n-1}$. Hence, they descend to coordinates on the quotient $X^{(n-1)}_{\lambda}/G_{n-1}$. A second family of hypersurfaces $Y^{(n-1)}_{t}$ is then defined in terms of the new variables $x_1, \dots, x_n$ by the remaining relation between those, namely the equation
\beq
\label{mirror_family}
 f_n(x_1,\dots, x_n, t) = x_1 \cdots x_n \, \Big( x_1 + \dots + x_n + 1 \Big) +  \frac{(-1)^{n+1} \, t}{(n+1)^{n+1}} =0 \,.
\eeq
\par
It was proved in ~\cite{MR1416334} that the family of special Calabi-Yau hypersurfaces $Y^{(n-1)}_{t}$ of degree $(n+1)$ in $\mathbb{P}^n$ obtained from Equation~(\ref{mirror_family}) is in fact the mirror family of a general hypersurface $\mathbb{P}^n$ of degree $(n+1)$ and co-dimension one in $\mathbb{P}^n$. The subspace of the cohomology $H^{n-1}(X^{(n-1)}_{\lambda},\mathbb{Q})$ invariant under the obvious action of $G_{n-1}$ or, equivalently, the cohomology $H^{n-1}(Y^{(n-1)}_{t},\mathbb{Q})$ has dimension $n$ and the Hodge numbers $(1, \dots, 1)$. We have the following:
\begin{lemma}
\label{MirrorRecursive}
For every $n\ge 2$ the family of hypersurfaces $Y^{(n-1)}_{t}$ given by Equation~(\ref{mirror_family}) is  a fibration over $\mathbb{P}^1$ by hypersurfaces $Y^{(n-2)}_{\tilde{t}}$ where $x_n$ is the affine base coordinate, and 
 \beq
\label{rational_function}
 t = -\frac{n^n}{(n+1)^{n+1}  x_n \, (x_n+1)^n} \,  \tilde t\,.
\eeq 
\end{lemma}
\begin{proof}
For each $x_n \not= 0, -1$  substituting $\tilde x_i= x_i/(x_n+1)$ for $1\le i \le n-1$ and $\tilde t = -n^n \, t/((n+1)^{n+1}  x_n \, (x_n+1)^n)$
defines a fibration of the hypersurface~(\ref{mirror_family}) by $f_{n-1}(\tilde x_1, \dots, \tilde x_{n-1} \tilde t)=0$ since
\beq
\label{fibration_mirrors}
 f_n(x_1,\dots, x_n, t) = x_n \, (x_n+1)^n \, f_{n-1}(\tilde x_1, \dots, \tilde x_{n-1}, \tilde t\,) =0 \,.
\eeq
\end{proof}
The rational function on the right hand side of Equation~(\ref{rational_function}) relating $t$ to $\tilde{t}$ has precisely the characteristic form of a generalized functional invariant~(\ref{eqn:gfi}) with $(i,j)=(n,1)$. The unique holomorphic $(n-1)$-form on $Y^{(n-1)}_{t}$ is given by
\beq
\label{eqn:holm_form}
 \eta^{(n-1)}_t = \dfrac{dx_2 \wedge dx_3 \wedge \dots \wedge dx_n}{\partial_{x_1} f_n( x_1,\dots, x_n, t)} \,.
\eeq
One defines an $(n-1)$-cycle $\Sigma_{(n-1)}$ on $Y^{(n-1)}_{t}$ by requiring that the period integral of $\eta_t$ over $\Sigma_{(n-1)}$ emerges as residue in $x_1$ in the integral over the  torus $T^n  = S^1 \times \dots  \times S^1$.  The corresponding section of the period sheaf  is given by
\beq
\label{residue_integral}
\begin{split}
 \omega_{n-1}(t) = \oint_{\Sigma_{n-1}}  
 \dfrac{dx_2 \wedge \dots\wedge  dx_n}{\partial_{x_1} f_n( x_1,\dots, x_n, t)} \,.
\end{split}
\eeq
We have the following:
\begin{proposition}
\label{PeriodLemma}
For $n \ge 1$ and $|t|\le 1$, there is a transcendental $(n-1)$-cycle $\Sigma_{n-1}$ on $Y^{(n-1)}_{t}$
such that
\beq
\label{eqn:mirror_period}
   \omega_{n-1}(t)  =  \oint_{\Sigma_{n-1}} \eta_t^{(n-1)}= (2\pi i)^{n-1} \, \hpg{n}{n-1}{  \frac{1}{n+1} \quad  \dots \quad \frac{n}{n+1} }{ 1 \; \dots\; 1}{ t}  \,.
\eeq
The iterative relation~(\ref{rational_function}) induces an iterative relation between periods, namely
\beq
\label{iterative_period}
\begin{split}
   \omega_{n-1}(t) & =  (2\pi i) \, \hpg{n}{n-1}{  \frac{1}{n+1} \quad  \dots \quad \frac{n}{n+1} }{ \frac{1}{n} \; \dots\;  \frac{n-1}{n}}{ t}    \star  \omega_{n-2}(t) \quad
    \text{for $n \ge 2$}\,.
\end{split}   
\eeq
where the cycle $\Sigma_{n-1}$ is determined by $T^n(r_n) := \frac{n}{n+1} \cdot \big( T^{n-1}(r_{n-1}) \times S^1_{r_{n-1}}\big)$ where $r_n = 1 - \frac{n}{n+1}$ and $\frac{n}{n+1} \cdot \Big( T^{n-1}(r_{n-1}) \times S^1_{r_{n-1}}\big)$ means rescaling by $\frac{n}{n+1}$.
\end{proposition}
\begin{proof}
Rescaling and writing Equation~(\ref{mirror_family}) in the form $1- \tau \phi(w_1,\dots ,w_n)=0$ with $\tau = \frac{1}{n+1} t^{\frac{1}{n+1}}$ and $\phi = w_1 + \dots + w_n + \frac{1}{w_1 \cdots w_n}$ leads to the residue period 
\beqn
 \frac{\omega_{n-1}}{(2\pi i)^{n-1}} = \frac{1}{(2\pi i)^n} \oint_{T^n} \dfrac{ \frac{dw_1}{w_1} \wedge \dots  \wedge \frac{dw_n}{w_n}}{1 - \tau \,  \phi(w_1,\dots ,w_n)}
 = \sum_{l \ge 0} [\phi^l]_0 \, \tau^l \,,
\eeqn
where $[\phi^l]_0$ for the constant term in $\phi^l$. Using Equation~(\ref{eqn:GaussMultiplication}) we obtain the series expansion of the hypergeometric function in Equation~(\ref{eqn:mirror_period}).
\par For $x_n \not= 0, -1$ the coordinate transformation $\tilde x_i= x_i/(x_n+1)$ for $1\le i \le n-1$ and $\tilde t = -n^n \, t/((n+1)^{n+1}  x_n \, (x_n+1)^n)$
yields
\beq
  \eta_t^{(n-1)} = \tilde{\eta}_{\tilde{t}}^{(n-2)} \wedge \frac{dx_n}{x_n(x_n+1)} \,.
\eeq
For any $x_n$ on $S^1_{r_n}$ with $r_n = 1 - \frac{n}{n+1}$ write $1/(x_n+1)=R \, e^{i\varphi}$ with $\frac{n+1}{n+2} \le R\le \frac{n+1}{n}$,  the transformation $\tilde x_i= x_i/(x_n+1)$ maps the circle $\tilde x_i = r_{n-1} e^{it}$ to the circle $x_i = R\, r_{n-1}e^{i(t+\varphi)}$ with $0<R \, r_{n-1} \le \frac{n+1}{n^2} <1$ as $n \ge 2$.  We obtain
\begin{equation}
 \underbrace{\int \hspace*{-0.2cm}\dots \hspace*{-0.2cm}\int}_{T^n(r_n)}
    \dfrac{d x_1 \wedge \dots \wedge dx_{n}}{f_{n}( x_1,\dots, x_{n},t)} 
    = \oint_{|x_n|=\frac{1}{2}} \frac{dx_n}{x_n(x_n+1)} \, \underbrace{\int \hspace*{-0.2cm}\dots \hspace*{-0.2cm}\int}_{T^{n-1}(r_{n-1})}
    \dfrac{d \tilde{x}_1 \wedge \dots \wedge d\tilde{x}_{n-1}}{f_{n-1}( \tilde{x}_1,\dots,\tilde{x}_{n-1},\tilde{t})} \,.
\end{equation}
Using Proposition~\ref{prop:period_integral_general} Equation~(\ref{iterative_period}) follows.
\end{proof}
\subsection{Mirror family of pairs of points}
For $n=1$ the family $Y^{(0)}_{t}$ is a family of pairs of points in $\mathbb{P}^1$ given in an affine chart by the equation
\beq
\label{quadratic}
  x_1 \, \Big( x_1 + 1 \Big) + \frac{t}{4} =0
\eeq
with $t\in \mathbb{P} \backslash \lbrace 0, 1, \infty\rbrace$. For $n=1$ the deformed quadratic Fermat pencil $X_\lambda$ and $Y_t$ are equivalent. That is, the family in Equation~(\ref{quadratic}) satisfies, $X^{(0)}_{\lambda}\cong Y^{(0)}_{t}$ with $t=\lambda^2$. Moreover, if we set $x_1=(y_0-1)/2$ in Equation~(\ref{quadratic}), we obtain Equation~(\ref{legendre0}), that is, precisely the universal starting point of our twist construction.
\subsection{Mirror family of elliptic curves}
For $n=2$ the family $Y^{(1)}_t$ is equivalent to the elliptic modular surface over the rational 
modular curve for $\Gamma_0(3)$. In fact, using the birational transformation
\beq
\label{hesse_transfo}
 x_1={\frac {4\, t}{3\,(2\,X+3)}}\;, \qquad x_2=\frac {i\sqrt {2} \, Y-4\,t}{6\, (2\,X+3)} -\frac{1}{2} 
\eeq
in Equation~(\ref{mirror_family}), we recover the Weierstrass normal form given by
\beq
\label{eqn:mfn_1}
Y^2 = 4 X^3 - \underbrace{(27 - 24 \, t)}_{g_2(t)} \, X - \underbrace{(27-36 \, t + 8 \, t^2)}_{g_3(t)} \,.
\eeq
Equation~(\ref{eqn:mfn_1}) is the Weierstrass model for $X_{431}$  obtained in Lemma~\ref{lemmaECmixed}. Corollary~\ref{curve_reduction} proves that the period integral of $dX/Y$ over a suitable family of one-cycles $\Sigma_1(t)$ equals the hypergeometric function
\beq
\label{rk2hgf_mirror}
 \frac{\omega(t)}{2 \pi i} = \hpg21{ \mu, 1-\mu}{1}{t} 
\eeq
with $\mu=\frac{1}{3}$. In other words, the extremal family of elliptic curves $X_{431}$ is obtained from the family of pairs of points in Equation~(\ref{legendre0}) using our twist construction with generalized functional invariant $(i,j,\alpha)=(2,1,1)$ as proved in Lemma~\ref{lemmaECmixed}. On the level of periods, this fact manifests as application of the cancellation formula~(\ref{eqn:cancellation}) in the Hadamard product
\beq
\hpg21{ \frac{1}{3}, \frac{2}{3} }{1}{t}  =\hpg21{ \frac{1}{3}, \frac{2}{3} }{\frac{1}{2}}{t} \star
{}_1F_0\left(\left. \frac{1}{2} \right| t \right)\,.
\eeq
Since the transformation (\ref{hesse_transfo}) maps the holomorphic one-form $3\sqrt{2}idX/Y$ to $\eta_t=dx_2/f_{2,x_1}$ in Equation~(\ref{eqn:holm_form}), the period~(\ref{rk2hgf_mirror}) is the period for the mirror cubic family. 
\subsection{Mirror families of K3 surfaces}
For $n=3$ the family $Y^{(2)}_t$ is equivalent to a family of minimal Weierstrass models given by the equation
\beq
\label{QuarticMixedTwist}
 Y^2 = 4 X^3 - \underbrace{g_2\left( -\frac{3^3 \, t}{4^4 \, u^3(u+1)} \right)  \Big(u  (u+1) \Big)^4}_{= \,g_2(t,u)} \, X - \underbrace{g_3\left( -\frac{3^3 t}{4^4 \,u^3(u+1)} \right)  \Big(u  (u+1) \Big)^6}_{= \,g_3(t,u)}, 
\eeq
where we choose for $g_2(t)$ and $g_3(t)$ the Weierstrass coefficients in Equation~(\ref{eqn:mfn_1}). This is seen by applying the birational transformation
\beq
\label{quartic_transfo}
\begin{split}
 x_1 & = - \frac{9 \, (u+1) \, t}{64 \, \left(3\, u^4+6 \, u^3+3\, u^2+2\, X\right)} \,,\\
 x_2 & = \frac { -64\,i \, \sqrt {2}\, Y+9\, \left( 64\, u^5+128\, u^4+64 \, u^3+ \left( 3 t+\frac {128}{3} \right) \, u+t \right)  \left( u+1 \right)}{  1152 \left(u^4 + 2 \, u^3+ u^4+\frac{2}{3} \, u \right) \left( u+1 \right) } \,,\\[0.2em]
x_3 & = -\,(u+1)\,,
\end{split}
\eeq
in Equation~(\ref{mirror_family}). We have the following:
\begin{lemma}
Equation~(\ref{QuarticMixedTwist}) defines a family of $M_2$-polarized K3 surfaces.
\end{lemma}
\begin{proof}
Equation~(\ref{QuarticMixedTwist}) defines a family of Jacobian elliptic K3 surfaces of Picard rank 19 with a singular fiber of  Kodaira-type $IV^*$ over $u=0$, a singular fiber of Kodaira-type $I_{12}$ over $u=\infty$, and four fibers of Kodaira-type $I_1$. The Mordell-Weil group is pure three-torsion generated by the sections  $(X,Y)=(-3/2\,{u}^{2} \left( u+1 \right) ^{2}, \, \pm {\frac {27\,i}{128}}\sqrt {2}t{u}^{2})$.  It follows that the determinant of the discriminant group equals $3 \cdot 12/3^2=2^2$.
\end{proof}
\par In other words, the  family of Jacobian elliptic K3 surfaces  in Equation~(\ref{QuarticMixedTwist}) is the twisted family with generalized functional invariant $(i,j,\alpha)=(3,1,1)$ of the elliptic curves in Equation~(\ref{eqn:mfn_1}). Application of Proposition~\ref{prop:period_integral_general} together with 
the cancellation formula~(\ref{eqn:cancellation}) proves that the period integral of the holomorphic two-form $du\wedge dX/Y$ over a suitable two-cycle $\Sigma_2(t)$ equals the hypergeometric function
 \beq
\label{SFId}
 \begin{split}
  \frac{\omega(t)}{(2\pi i)^2} = \hpg32{\frac{1}{4}, \frac{2}{4}, \frac{3}{4}}{1, 1}{t} = \hpg32{\frac{1}{4}, \frac{2}{4}, \frac{3}{4}}{\frac{1}{3}, \frac{2}{3}}{t} 
 \star\hpg21{ \frac{1}{3}, \frac{2}{3} }{1}{t} \,.
 \end{split}
\eeq
Since the transformation~(\ref{quartic_transfo}) maps the holomorphic two-form $3\sqrt{2} i du \wedge dX/Y$ to $\eta_t=dx_2\wedge dx_3/f_{3,x_1}$  in Equation~(\ref{eqn:holm_form}), the period in Equation~(\ref{SFId}) is the period for the mirror quartic family. We make the following:
\begin{remark}
Equation~(\ref{QuarticMixedTwist}) defines a family of $M_2$-lattice polarized K3 surfaces $Y$ with transcendental lattice $T(Y) =H \oplus \langle 4 \rangle$. Following Dolgachev~\cite{MR1420220} its mirror partner $Y^{\vee}$  is the family of generic quartic surfaces in $\mathbb{P}^3$ with $\operatorname{NS}(Y^{\vee})=\langle 4 \rangle$ since $T(Y) = H \oplus \operatorname{NS}(Y^{\vee})$. Equivalently, it was proved in~\cite{MR1877764} that the mirror quartic is the family of the Calabi-Yau varieties arising from the polytope $P_0^*$ in dimension $3$.  The family $X$ is the family of the Calabi-Yau varieties arising from the reflexive polytope $P_0$ and is the family of generic quartic surfaces in $\mathbb{P}^3$.
\end{remark}
\subsection{Mirror families of Calabi-Yau threefolds}
For $n=4$ the family $Y^{(3)}_t$ is equivalent to the family of minimal Weierstrass models given by the equation
\beq
\begin{split}
\label{QuinticMixedTwist}
 Y^2 = 4 \, X^3 & - \underbrace{g_2\left( \frac{3^3 \, t}{5^5 \, u(u+1) \, v^3 \, (v+1)^2} \right) \, \Big(u \, (u+1) \, v\, (v+1)\Big)^4}_{=: \, g_2(t, u, v)} \, X\\
 &  - \underbrace{g_3\left( \frac{3^3 \, t}{5^5\,u \, (u+1) \, v^3 \, (v+1)^2} \right) \, \Big(u \, (u+1)\, v\, (v+1) \Big)^6}_{=: \, g_3(t, u, v)}, \\[-0.5em]
 \end{split}
\eeq
where we choose for $g_2(t)$ and $g_3(t)$ the Weierstrass coefficients in Equation~(\ref{eqn:mfn_1}). This is seen by applying the birational transformation
\beq
\label{QuinticTransfo}
\begin{split}
x_1 &= \scalebox{1}{$\frac{ 36 \, t \, u \left( u+1 \right) }{9375\,{v}^{2}
 \left( v+1 \right) ^{2}{u}^{4}+18750\,{v}^{2} \left( v+1 \right) ^{2}
{u}^{3}+9375\,{v}^{2} \left( v+1 \right) ^{2}{u}^{2}+6250\,X}$} \;,\\
x_2 & = \scalebox{1}{$\frac {-3125\,i\sqrt {2}Y+28125
\, \left( {v}^{3} \left( v+1 \right) ^{2}{u}^{4}+2\,{v}^{3} \left( v+1
 \right) ^{2}{u}^{3}+ \left( {v}^{5}+2\,{v}^{4}+{v}^{3}-{\frac {12\,t}{3125}} \right) {u}^{2}-{\frac {12\,ut}{3125}}+\frac{2}{3}\,v\,X \right)  \left( u+1 \right) u \left( v+1 \right)}{ \left( 56250\,{v}^{2} \left( v+1 \right) ^{2}{u}^{4}+112500\,{v}^{2} \left( v+1 \right) ^{2}{u}^{3}+56250\,{v}^{2}
 \left( v+1 \right) ^{2}{u}^{2}+37500\,X \right)  \left( u+1
 \right) u \left( v+1 \right) }$}\, , \\[0.2em]
 x_3 & =  u \, (v+1) \, , \qquad 
 x_4  = - (u+1) \, (v+1) \,,
\end{split}
\eeq
in Equation~(\ref{mirror_family}). Equation~(\ref{QuinticMixedTwist}) defines a family of minimal Weierstrass models over the two-dimensional base $\mathbb{P}^1 \times \mathbb{P}^1$ with affine coordinates  $u$ and $v$. Restricted to any generic $v$-slice we obtain a family of Jacobian elliptic K3 surfaces with $M_3$-polarization.
\par By inspection, the  family  in Equation~(\ref{QuinticMixedTwist})  is obtained from the family of elliptic curves in Equation~(\ref{eqn:mfn_1}) by applying a sequence of twists, with generalized functional invariant  $(i,j,\alpha)=(1,1,1)$ first, and $(i,j,\alpha)=(3,2,1)$ second. Application of Proposition~\ref{prop:period_integral_general} and  the cancellation formula~(\ref{eqn:cancellation}) proves that the period integral of the holomorphic three-form $dv\wedge du\wedge dX/Y$  over a suitable three-cycle $\Sigma_3(t)$ equals the hypergeometric function
\beq
\label{SFId_5}
   \frac{\omega(t)}{(2\pi i)^3} =  \hpg43{ \frac{1}{5},  \, \frac{2}{5},  \, \frac{3}{5}, \, \frac{4}{5}}{1, \, 1, \, 1}{t} =
\hpg43{ \frac{1}{5},  \, \frac{2}{5},  \, \frac{3}{5}, \, \frac{4}{5}}{\frac{1}{3},  \, \frac{2}{3},\, \frac{1}{2} }{t} \star {}_1F_0\!\left(\left. \frac{1}{2} \right| t \right) \star 
\hpg21{ \frac{1}{3}, \frac{2}{3} }{1}{t} \,.
\eeq
Since the transformation~(\ref{QuinticTransfo}) maps the holomorphic two-form $3\sqrt{2} i dv \wedge du \wedge dX/Y$ to $\eta_t=dx_2\wedge dx_3 \wedge dx_4/f_{4,x_1}$ in Equation~(\ref{eqn:holm_form}), the period in Equation~(\ref{SFId_5}) is the period for the mirror quintic family. The period in Equation~(\ref{SFId_5}) is annihilated by the fourth-order Picard-Fuchs operator  $L^{(4)}_t\big((\frac{1}{5}, \dots, \frac{4}{5});(1, \dots, 1)\big)$. The Picard-Fuchs operator is one of the 14 original Calabi-Yau operators mentioned in the introduction and labelled ``$(1)$" in the AESZ database~\cite{Almkvist:aa}. 
\begin{remark}
It was shown in~\cite{MR1101784} that the family of Calabi-Yau threefolds $Y^{(3)}_t$ has a general fiber with Hodge numbers 
$h^{2,1}(Y^{(3)}_t)=1$ and $h^{1,1}(Y^{(3)}_t)=101$. Following~\cite{MR1101784}  its mirror $Y^{\vee}$  is the general family 
of quintic surfaces in $\mathbb{P}^4$ with Hodge numbers $h^{1,1}(Y^{\vee})=1$ and $h^{2,1}(Y^{\vee})=101$.
\end{remark}
\section{Combining twists and base transformations}
\label{sec:base_transformations}
In this section, we apply linear and quadratic transformations to the rational parameter space of the twisted families of elliptic curves, K3 surfaces, and Calabi-Yau threefolds already constructed.
\subsection{Transformations of extremal families of elliptic curves}
We apply the linear transformation $t  \mapsto \frac{t}{t-1}$ to the rational deformation space of any extremal family of elliptic curves in Lemma~\ref{lemmaECmixed} to obtain the Weierstrass models
\beq
\label{transfo_ref}
 Y^2 = 4X^3 - \underbrace{g_2\left(\frac{t}{t-1}\right) (1-t)^4}_{=: \, \tilde{g}_2(t)}   X -  \underbrace{g_3\left(\frac{t}{t-1}\right)  (1-t)^6}_{=: \, \tilde{g}_3(t)} \,,
\eeq
where $g_2(t)$ and $g_3(t)$ are given in Table~\ref{tab:3ExtRatHg}. The transformation produces isomorphic families of elliptic curves that we denote by $\tilde{X}_{141}$, $\tilde{X}_{431}$, $\tilde{X}_{321}$, and $\tilde{X}_{211}$. They have the same number/type of singular fibers and Mordell-Weil groups as the families $X_{141}$, $X_{431}$, $X_{321}$, and $X_{211}$, but with the singular fibers over $t=1$ and $t=\infty$ interchanged. The families $X_{141}$, $X_{431}$, $X_{321}$, or $X_{211}$ (and hence $\tilde{X}_{141}$, $\tilde{X}_{431}$, $\tilde{X}_{321}$, or $\tilde{X}_{211}$) are labelled by $\mu=\frac{1}{2}, \frac{1}{3}, \frac{1}{4}$, or $\mu=\frac{1}{6}$ (and by $\tilde{\mu}=\frac{1}{2}, \frac{1}{3}, \frac{1}{4}$, or $\tilde{\mu}=\frac{1}{6}$).
Let $\tilde{\Sigma}_1(t)$ be the family of one-cycles obtained from $\Sigma_1(\tilde{t})$ in Lemma~\ref{lemma_cycle} with $\tilde{t}=\frac{t}{t-1}$ by rescaling $(X,Y)\mapsto ((1-t)^2 X, (1-t)^3 Y)$. For $t \in \mathbb{C}$ with $|\tilde{t}|<1/2$, $\tilde{\Sigma}_1(t)$ defines a family of A-cycles on $\tilde{X}_{141}$, $\tilde{X}_{431}$, $\tilde{X}_{321}$, and $\tilde{X}_{211}$ in the neighborhood of $t=0$. We have the following:
\begin{corollary}
\label{lemma2}
For the families of elliptic curves $\tilde{X}_{141}$, $\tilde{X}_{431}$, $\tilde{X}_{321}$, and $\tilde{X}_{211}$ in Equation~(\ref{transfo_ref}), period integrals of $dX/Y$ are annihilated by the Picard-Fuchs operator 
 \beq
\label{L2tilde}
  \tilde{L}^{(2)}_t = \theta^2 - t \, \big( 2\,\theta^{2}+ 2\,\theta + \tilde{\mu}^2 - \tilde{\mu} +1\big) + t^2 \, \big(\theta+1\big)^2 \,.
\eeq
In particular, the period over $\tilde{\Sigma}_1(t)$ is holomorphic at $t=0$ and  given by
\beq
\label{rk2hgf_tilde}
 \tilde{\omega} =  \frac{2\pi i}{(1-t)^{1-\tilde{\mu}}} \; \hpg21{ \tilde{\mu}, \tilde{\mu}}{1}{\frac{t}{t-1}} 
 \eeq
 with $\tilde{\mu} \in \{ \frac{1}{2}, \frac{1}{3}, \frac{1}{4}, \frac{1}{6} \}$.
 \end{corollary}
\begin{proof}
The proof follows from the following well-known identity for the Gauss hypergeometric function, namely
\beqn
  \frac{2\pi i}{1-t} \; \hpg21{ \tilde{\mu}, 1-\tilde{\mu}}{1}{\frac{t}{t-1}} =  \frac{2\pi i}{(1-t)^{1-\tilde{\mu}}} \; \hpg21{ \tilde{\mu}, \tilde{\mu}}{1}{\frac{t}{t-1}} \,. 
\eeqn
\end{proof}
\begin{remark}
\label{rem:double_transfo_rational}
For the above families of elliptic curves, twisted families can be constructed as in Section~\ref{ssec:Mn-polarized}. The twisted families with generalized functional invariant $(i,j,\alpha)=(1,1,1)$ of $\tilde{X}_{141}$, $\tilde{X}_{431}$, $\tilde{X}_{321}$, and $\tilde{X}_{211}$ are families of $M_n$-lattice polarized K3 surfaces. A continuously varying family of closed two-cycles $\tilde{\Sigma}_2(t)$ can be constructed in each case such that the period over $\tilde{\Sigma}_2(t)$ is given by
\beq
 \tilde{\omega}=(2\pi i)^2 \;  {}_1F_0\!\left(\left. \frac{1}{2} \right| t \right) \star \left( \frac{1}{1-t} \; \hpg21{ \tilde{\mu}, 1-\tilde{\mu}}{1}{\frac{t}{t-1}} \right)
\eeq  
 with $\tilde{\mu} \in \{ \frac{1}{2}, \frac{1}{3}, \frac{1}{4}, \frac{1}{6} \}$.
\end{remark}
\subsection{Transformations of lattice polarized K3 surfaces}
\label{MMM-polarized_families}
We apply a linear or quadratic transformations, denoted by $t \mapsto f_k(t)$ with $k=1,\dots,5$, to the rational parameter space of the families of $M_n$-lattice polarized K3 surfaces in Lemma~\ref{lemmaECmixed} to obtain new Weierstrass models given by
\beq
\label{transfo_ref_K3}
 Y^2 = 4 X^3 - \underbrace{g_2\big(f_k(t), u\big) \, h_k(t)^2}_{=: \,\tilde{g}^{(k)}_2(t,u)}  X - \underbrace{g_3\big(f_k(t), u\big) \,  h_k(t)^3}_{=: \,\tilde{g}^{(k)}_3(t,u)} \,,
\eeq
where $g_2(t, u)$ and $g_3(t, u)$ are given in Lemma~\ref{lemmaK3mixed_eq}, and the polynomials $f_k(t)$ and $h_k(t)$ are given in Table~\ref{tab:MnK3_RationalTransfo}. It is readily checked that $\tilde{g}^{(k)}_2(t, u)$ and $\tilde{g}^{(k)}_3(t, u)$ define families of minimal Weierstrass model.  By construction, these new families remain families of $M_n$-lattice polarized K3 surfaces. 
\begin{table}[H]
\scalebox{\Scale}{
\begin{tabular}{|c|cc|c||l|l|} \hline
 &&&&& \\[-0.9em]
 $k$ & $f_k(t)$ & $h_k(t)$ & $(p, q)$ & $ \tilde{\omega}/(2\pi i)^2$ & $\tilde{L}^{(3)}_t$ \\[0.1em]
 \hline
 \hline
 &&&&& \\[-0.8em]
 $1$ & $t$ & $1-t$ & $\left( \frac{\mu}{2}, \frac{1-\mu}{2} \right)$ & $\phantom{=}  \frac{1}{\sqrt{1-t}}\, {}_3F_2\left( \mu, 1-\mu, \frac{1}{2}; 1, 1  \Big| \,t\right)$ & $ \theta^3 - t \, \big(\theta + \frac{1}{2} \big) \, \big( 2 \,\theta^2 + 2\, \theta - \mu^2 + \mu + 1\big)$ \\[0.4em]
 &&&& $= \left( \frac{1}{(1-t)^{\frac{1-\mu/2-(1-\mu)/2}{2}}} \, {}_2F_1\left(\left. \frac{\mu}{2}, \frac{1-\mu}{2}; 1 \right| \ t\right) \right)^2$ & \hfill $+ \; t^2 \, \big(\theta + 1\big) \,  \big(\theta + \mu + \frac{1}{2} \big) \, \big(\theta - \mu + \frac{3}{2} \big)$\\[0.2em]
  &&&&& \\[-0.8em]
 \hline
  &&&&& \\[-0.8em]
  $2$ & $\frac{t}{t-1}$ & $1-t$ & $\left( \frac{\mu}{2}, \frac{1+\mu}{2} \right)$  & $\phantom{=}  \frac{1}{\sqrt{1-t}} \,  {}_3F_2\left( \mu, 1-\mu, \frac{1}{2}; 1, 1  \Big| \,  \frac{t}{t-1} \right)$ & $ \theta^3 - t \, \big(\theta + \frac{1}{2} \big) \, \big( 2 \,\theta^2 + 2\, \theta + \mu^2 - \mu + 1\big) $ \\[0.4em]
  &&&& $= \left( \frac{1}{(1-t)^{\frac{1-\mu/2-(1+\mu)/2}{2}}} \, {}_2F_1\left(\left. \frac{\mu}{2}, \frac{1+\mu}{2}; 1 \right|\,t\right) \right)^2$ &  \hfill  $+ \; t^2 \, \big(\theta + 1\big) \,  \big(\theta + \frac{1}{2} \big) \, \big(\theta + \frac{3}{2} \big)  $ \\[0.2em]
   &&&&& \\[-0.8em]
 \hline
&&&&&  \\[-0.8em]
  $3$ & $4t(1-t)$ & $1$ & $\left( \mu, 1-\mu\right)$  & $\phantom{=}    {}_3F_2\left( \mu, 1-\mu, \frac{1}{2}; 1, 1  \Big| \, 4 \, t \, (1-t) \right)$ & $ \theta^3 - 2 \, t \, \big(\theta + \frac{1}{2} \big) \, \big( \theta^2 +  \theta  - 2\, \mu^2 - 2\, \mu \big)$ \\[0.4em]
  &&&& $= \left( \frac{1}{(1-t)^{\frac{1-\mu-(1-\mu)}{2}}} \,  {}_2F_1\left(\left. \mu, 1-\mu; 1 \right| \, t\right) \right)^2$&  \hfill  $ + \; t^2 \, \big(\theta + 1\big) \,  \big(\theta +  2\, \mu\big) \, \big(\theta + 2\, (1-\mu) \big) $\\[0.2em]
   &&&&& \\[-0.8em]
 \hline
 &&&&& \\[-0.8em]
  $4$ & $ \frac{t^2}{4(t-1)}$ & $1-t$ & $\left( \mu, \frac{1}{2} \right)$  & $\phantom{=}  \frac{1}{\sqrt{1-t}}\,  {}_3F_2\left( \mu, 1-\mu, \frac{1}{2}; 1, 1  \Big| \frac{t^2}{4(t-1)} \right) $ & $ \theta^3 - t \, \big(\theta + \frac{1}{2} \big) \, \big( 2 \,\theta^2 + 2\, \theta + 1\big)  $ \\[0.4em]
  &&&& $= \left( \frac{1}{(1-t)^{\frac{1-\mu-1/2}{2}}} \,  {}_2F_1\left(\left. \mu, \frac{1}{2}; 1 \right|\, t\right) \right)^2$&  \hfill  $+ \; t^2 \, \big(\theta + 1\big) \,  \big(\theta + \mu + \frac{1}{2} \big) \, \big(\theta - \mu + \frac{3}{2} \big)$\\[0.2em]
   &&&&& \\[-0.8em]
 \hline
  &&&&& \\[-0.8em]
  $5$ & $ - \frac{4 \, t}{(1-t)^2}$ & $(1-t)^2$ & $\left( \mu, \mu \right)$  &  $\phantom{=}  \frac{1}{1-t} \,  {}_3F_2\left( \mu, 1-\mu, \frac{1}{2}; 1, 1 \Big| \,  - \frac{4 \, t}{(1-t)^2} \right) $ & $  \theta^3 - 2 \, t \, \big(\theta + \frac{1}{2} \big) \, \big( \theta^2 +  \theta  + 2\, \mu^2 - 2\, \mu + 1\big) $ \\[0.4em]
&&&& $= \left( \frac{1}{(1-t)^{\frac{1-\mu-\mu}{2}}} \,  {}_2F_1\left(\left.\mu, \mu; 1 \right|\, t\right) \right)^2$&  \hfill  $+ \; t^2 \, \big(\theta + 1\big)^3   $\\[0.5em]
 \hline
\end{tabular}}
\caption{Rational transformations and periods of new family}\label{tab:MnK3_RationalTransfo}
\end{table}
\par Let $\tilde{\Sigma}_2(t)$ be the family of two-cycles obtained from the family $\Sigma_2(f_k(t))$ -- where $\Sigma_2(t)$ was given in Section~\ref{ssec:Mn-polarized} -- by rescaling $(u,X,Y)\mapsto (u, h_k(t) X, h_k(t)^{3/2} Y)$. For $t \in \mathbb{C}$ with $|f_k(t)|<1/2$, $\tilde{\Sigma}_2(t)$ defines a continuously varying family of two-cycles. We have the following:
\begin{corollary}
\label{cor:periodK3mixed_transform}
For the families of elliptic K3 surfaces in Equation~(\ref{transfo_ref_K3}), the period integrals of $du \wedge dX/Y$ are annihilated by the Picard-Fuchs operator 
\beq
\label{L3tilde}
\begin{split}
 \tilde{L}^{(3)}_t = \theta^3 &- t  \left( 2\,\theta+1 \right)  \left( \theta^{2}+\theta+2pq-p-q+1 \right)\\
 &+ t^2  \left( \theta+1 \right)  \left( \theta+1+q-p \right)  \left( \theta
+1-q+p \right).
\end{split}
\eeq
In particular, the period over $\tilde{\Sigma}_2(t)$ is holomorphic at $t=0$ and given by
\beq
\label{periodK3mixed_transform_eq}
 \tilde{\omega} =  (2\pi i)^2 \; \left( \frac{1}{(1-t)^{\frac{1-p - q}{2}}} \; \hpg21{p , q }{1}{t} \right)^2 \,,
\eeq
where $\mu$ and $(p, q)$ for $k=1,\dots,5$ are given in Table~\ref{tab:MnK3_RationalTransfo}. 
 \end{corollary}
 \begin{proof}
The construction is an application of the general construction in Section~\ref{ssec:twisting}.  The proof amounts to checking some classical and well-known hypergeometric function identities listed in Table~\ref{tab:MnK3_RationalTransfo}. The identities allow us to write each period as a symmetric square. 
\end{proof}
\subsection{Threefolds by combining twists and base transformations}
\label{ssec:3folds_hg_even}
To obtain families of elliptic Calabi-Yau threefolds, we start with a family of Jacobian elliptic K3 surfaces $X \to \mathbb{P}^1$, given as Weierstrass model
\beq
\label{eqn:K3_family}
 Y^2 = 4X^3-g_2(t,u)\, X-g_3(t,u)  \,.
\eeq
We will restrict ourselves to the cases where this K3 surface is chosen from Section~\ref{ssec:Mn-polarized}, Section~\ref{MMM-polarized_families}, or Remark~\ref{rem:double_transfo_rational}. Applying our twist construction, we obtain new Weierstrass models for twisted families with generalized functional invariant $(k, l,\beta)$ with $1 \le k, l  \le 6$, $\beta \in \{ \frac{1}{2}, 1\}$, and $c_{k l}=(-1)^k \, k^k \, l^l/(k+l)^{k+l}$ that are families of Gorenstein threefolds. We have the following:
\begin{lemma}
\label{lemmaCY3mixed}
For every family of elliptic K3 surfaces from Section~\ref{ssec:Mn-polarized}, Section~\ref{MMM-polarized_families}, or Remark~\ref{rem:double_transfo_rational}, the twisted family with generalized functional invariant $(k, l, \beta)$, given by the Weierstrass equation
\beq
\label{lemmaCYmixed_eq}
\begin{split}
 Y^2 = 4 X^3  - \underbrace{g_2\left(  \frac{c_{kl} t}{v^k (v+1)^l},u \right) \, v^4 (v+1)^{4\beta} }_{=: \, g_2(t,u,v)}  X  - \underbrace{g_3\left(  \frac{c_{kl} t}{v^k (v+1)^l},u  \right)  \, v^6  (v+1)^{6\beta} }_{=: \, g_3(t,u,v) }   \,,
 \end{split}
\eeq
defines a family (over $B$) of Jacobian elliptic Calabi-Yau threefolds over $\mathbb{P}^1 \times \mathbb{P}^1$. For K3 surfaces from Lemma~\ref{lemmaK3mixed}, we assumed $1 \le k \le 1/\mu$ and $1 \le l \le \beta/\mu$ with $\beta \in \lbrace \frac{1}{2}, 1 \rbrace$,  and for K3 surfaces from Section~\ref{MMM-polarized_families} or Remark~\ref{rem:double_transfo_rational} we set $(k, l, \beta)=(1,1,1)$.
\end{lemma}
\begin{proof}
The construction is an application of the general construction in Section~\ref{ssec:twisting}. 
\end{proof}
\par For each twisted family in Equation~(\ref{lemmaCYmixed_eq}), we define a family of closed three-cycles $\Sigma_3(t)$ as follows: For $t \in \mathbb{C}$ with $|t|<1/(2^{k+l+1}|c_{k l}|)$, we start with the circle $C=C_{1/2}(0)$, given by $|v|=\frac{1}{2}$ in the $v$-plane with counterclockwise orientation. For every $v \in C$, a two-cycle $\Sigma_2'(t, v)$ in the K3-fiber is obtained from $\Sigma_2(\frac{c_{k l} t}{v^k (v+1)^l})$, where $\Sigma_2(t)$ was defined in Section~\ref{ssec:Mn-polarized}, by rescaling $(u, X,Y) \to (u, v^2 (v+1)^{2\beta} X, v^3 (v+1)^{3\beta} Y)$. For $t\in \mathbb{C}$ with $|t|<1/(2^{i+j+1}|c_{i j}|)$, we obtain a continuously varying family of closed three-cycles as a warped product $\Sigma_3(t) = C \times_v \Sigma'_2(t, v)$. 
\subsubsection{Calabi-Yau operators of the hypergeometric case}
\label{sssec:hg}
Applying our twist construction to the elliptic K3 surfaces from Section~\ref{ssec:Mn-polarized}, we obtain the following:
\begin{corollary}
\label{hg-case}
For the twisted families in Lemma~\ref{lemmaCY3mixed} with generalized functional invariant $(k, l, \beta)$ of the $M_n$-lattice polarized K3 surfaces from Section~\ref{ssec:Mn-polarized}, the period integral~(\ref{eqn:period_new}) is annihilated by the Picard-Fuchs operator 
\beq
\label{L4mu}
 {}_1L_t^{(4)}(p, q) =\theta^4 - t \,  \big(\theta+p\big)  \big(\theta+q\big) \big(\theta+1-q\big) \big(\theta+1-p\big) \,.
\eeq
In particular, the period over $\Sigma_3(t)$ is holomorphic at $t=0$ and given by
\beq
\label{rk4hgf}
\omega= (2 \pi i)^3 \; \hpg43{p, q, 1-q, 1-p}{1, 1, 1}{t} \,.
\eeq
The values $(p, q)$ resulting from a twist with generalized functional invariant $(k, l, \beta)$ of a family $M_n$-lattice polarized K3 surface with $1 \le n \le 4$
are given in Table~\ref{tab:VHS}.  
\end{corollary}
\begin{proof}
The proof follows by applying Equation~(\ref{eq:period_new}) to the periods $\omega(t)$ computed in Corollary~\ref{cor:modular_K3_surfaces}. One then checks for which generalized functional invariants $(k, l, \beta)$ within the range provided by Lemma~\ref{lemmaCY3mixed}, the Hadamard product in Proposition~\ref{prop:period_integral_general} produces a hypergeometric function of type ${}_4F_3$ using Equation~(\ref{eqn:cancellation}).
\end{proof}
Twisted families of the Jacobian elliptic K3 surfaces from Section~\ref{ssec:M-polarized} can also be obtained from generalized functional invariants $(k, l, \beta)=(\frac{m}{2}, \frac{m}{2}, 1)$ in Equation~(\ref{lemmaCYmixed_eq}) where $m$ is an odd integer. In fact, if we set
\beq
 v= - \frac{1}{1+\tilde{v}^2} \,, \quad X = \frac{\tilde{v}^2 \tilde{X}}{(1+\tilde{v}^2)^4} \,,\quad Y = \frac{\tilde{v}^3 \tilde{Y}}{(1+\tilde{v}^2)^6} \,,
\eeq
we obtain $dv \wedge du \wedge dX/Y = 2 d\tilde{v} \wedge du \wedge d\tilde{X}/\tilde{Y}$, and Equation~(\ref{lemmaCYmixed_eq})  becomes the minimal and normal Weierstrass model given by
\beq
\label{lemmaCYmixed_eq_18}
\begin{split}
 \tilde{Y}^2 = 4 \tilde{X}^3  - \underbrace{g_2\left(  \frac{t (1+\tilde{v}^2)^m}{(2\tilde{v})^m},u \right) \, \tilde{v}^4 }_{=: \, \tilde{g}_2(t,u,\tilde{v})}  \tilde{X}  -  \underbrace{g_3\left(  \frac{t (1+\tilde{v}^2)^m}{(2\tilde{v})^m},u \right) \, \tilde{v}^6 }_{=: \, \tilde{g}_3(t,u,\tilde{v})}   \,.
 \end{split}
\eeq
We have the following:
\begin{lemma}
\label{lemmaCY3mixed_18}
For every family of elliptic K3 surfaces from Section~\ref{ssec:M-polarized}, the twisted family with generalized functional invariant $(k, l, \beta)=(\frac{m}{2}, \frac{m}{2}, 1)$, given by the Weierstrass equation~(\ref{lemmaCYmixed_eq_18}), defines a family of elliptic Calabi-Yau threefolds over $\mathbb{P}^1 \times \mathbb{P}^1$. Here, we set $m=1$, except for the family of $M'$-lattice polarized K3 surfaces in Lemma~\ref{lemmaK3mixed18_eq} where we have $m \in \{1,3\}$.
\end{lemma}
The construction of a continuously varying family of closed three-cycles $\tilde{\Sigma}_3(t)$ is analogous to the construction of $\Sigma_3(t)$ above. We also have the following:
\begin{corollary}
\label{hg-case_18}
For the twisted families in Lemma~\ref{lemmaCYmixed_eq_18} with generalized functional invariant  $(k, l, \beta)=(\frac{m}{2}, \frac{m}{2}, 1)$ of the elliptic K3 surfaces from Section~\ref{ssec:M-polarized}, the period integral of $d\tilde{v} \wedge du \wedge d\tilde{X}/\tilde{Y}$ is annihilated by the Picard-Fuchs operator 
\beq
\label{L4mu18}
 {}_1L_{t^2}^{(4)}(p, q) =\theta^4 - t^2 \,  \big(\theta+2 p\big) \, \big(\theta+2 q\big) \,\big(\theta+2-q\big) \,\big(\theta+2-p\big) \,.
\eeq
In particular, the period over $\tilde{\Sigma}_3(t)$ is holomorphic at $t=0$ and given by
\beq
\label{rk4hgf18}
\tilde{\omega} = (2 \pi i)^3 \; \hpg43{p, q, 1-q, 1-p}{1, 1, 1}{t^2}\,,
\eeq
for the values $(p, q)$ given in Table~\ref{tab:VHS}.  
\end{corollary}
\begin{proof}
We evaluate the period of the holomorphic three-from $d\tilde{v} \wedge du \wedge d\tilde{X}/\tilde{Y}$ over $\tilde{\Sigma}_3(t)$ by a residue computation. By construction of $\tilde{\Sigma}_3(t)$, it follows that for $|t|<1$ and $(\tilde{v},u, \tilde{X},\tilde{Y}) \in \tilde{\Sigma}_3(t)$ we have
\beqn
 \left| \dfrac{t \, (1+\tilde{v}^2)^m}{(2\tilde{v})^m} \, \left(1 +\frac{1}{2 \, u \, (u+1)}\right) \right|, \; \left| \dfrac{t \, (1+\tilde{v}^2)^m}{(2\tilde{v})^m} \right|  < 1 \,.
\eeqn
Using Corollary~\ref{cor:modular_K3_surfaces_18}, we obtain the period integral from the following computation
\beq
\begin{split}
 \frac{\tilde{\omega}}{(2\pi i)^3}= &=  \frac{1}{2\pi i} \,  \oint_{C_{1/2}(0)} \dfrac{d\tilde{v}}{\tilde{v}}  \; \hpg43{ \frac{\mu}{2}, \frac{1-\mu}{2}, \frac{1+\mu}{2}, 1-\frac{\mu}{2}}{1, 1, \frac{1}{2} }{ \frac{t^2 \, (1+\tilde{v}^2)^{2m}}{(2\tilde{v})^{2m}}}\\
  & =  \hpg{4+m}{3+m}{  \frac{\mu}{2}, \frac{1-\mu}{2},  \frac{1+\mu}{2}, 1-\frac{\mu}{2}, \frac{1}{2m}, \frac{3}{2m}, \dots,  \frac{2m-1}{2m}}{ 
  \frac{1}{2}, \frac{1}{m}, \frac{2}{m}, \dots, \frac{m}{m}, 1, 1}{ t^2  }\,.
 \end{split}
\eeq
We observe that for the given parameters $m, \mu$ there is a cancellation in the coefficients of the hypergeometric series, and we obtain Equation~(\ref{rk4hgf18}). For $m=1$, or $m=3, \mu=\frac{1}{3}$, the hypergeometric series reduce to
\beqn
  \hpg43{\frac{\mu}{2}, \frac{1-\mu}{2}, \frac{1+\mu}{2},  1-\frac{\mu}{2}}{ 1, 1, 1}{t^2} \,,  \quad \text{or} \quad \hpg43{\frac{1}{6}, \frac{1}{6}, \frac{5}{6},  \frac{5}{6}}{ 1, 1, 1}{t^2} \,.
\eeqn
\end{proof}
\begin{remark}
The Calabi-Yau operators~(\ref{L4mu}) obtained in Corollary~\ref{hg-case} and \ref{hg-case_18} for parameters $(p, q, q'=1-q, p'=1-p)$, with their classification number  in the AESZ database~\cite{Almkvist:aa}, are summarized in Table~\ref{tab:VHS}. The Calabi-Yau operators have degree one and are called Calabi-Yau operators of the hypergeometric case. In particular, Table~\ref{tab:VHS} includes the generalized functional invariants that were found in \cite{MR3431621} to construct threefolds fibered by $M_n$-polarized K3 surfaces using toric geometry.
\end{remark}
\begin{remark}
It was proved in \cite{MR2282973} that the regular singular points $t=0, 1, \infty$ of the Picard-Fuchs operator in Equation~(\ref{L4mu})
correspond to the conifold limit, large complex structure limit, and the orbifold point, respectively.  In particular, the monodromy around $t=\infty$ is maximally unipotent.
\end{remark}
\begin{table}
\scalebox{\Scale}{
\begin{tabular}{|c|c|c|cccc|cccc|cccc|} 
\hline
 &&&&&&&&&&&&&&\\[-0.9em]
 $\#$ & AESZ & ${(p, q, q', p')}$
& \multicolumn{4}{c}{twist of K3 with $\big(k, l,\beta=1\big)$} 
& \multicolumn{4}{|c|}{twist of K3 with $\big(k, l,\beta=\frac{1}{2}\big)$} & \multicolumn{4}{c|}{twist with  $\big(\frac{m}{2},\frac{m}{2},1\big)$} \\[0.4em]
\cline{4-15} 
 &&&&&&&&&&&&&&\\[-0.9em]
&  &   
 & $M_4$ 		& $M_3$ 			& $M_2$ 			& $M_1$ 			
 & $M_4$ 		& $M_3$			& $M_2$ 			& $M_1$ 
 & $\tilde{M}'$ 	& $M'$ 			& $\tilde{M}$ 		& $M$\\[0.4em]
\hline
\hline
&&&&&&&&&&&&&&\\[-0.9em]
$1$ & (3) & $\big(\frac{1}{2}, \frac{1}{2}, \frac{1}{2}, \frac{1}{2}\big)$	& $\big(1,1\big)$	
&						
&						&&&&&& &&&\\[0.4em]
\hline
&&&&&&&&&&&&&&\\[-0.9em]
$2$ & (5) & $\big(\frac{1}{3}, \frac{1}{2}, \frac{1}{2}, \frac{2}{3}\big)$ 	& $\big(2,1\big)$ 	& $\big(1,1\big)$		 
& 						&&&&&& &&&\\[0.4em]
\hline
&&&&&&&&&&&&&&\\[-0.9em]
$3$ & (4) & $\big(\frac{1}{3}, \frac{1}{3}, \frac{2}{3}, \frac{2}{3}\big)$	&				& $\big(2,1\big)$ 		 
& 						&&&&&&& &&\\[0.4em]
\hline
&&&&&&&&&&&&&&\\[-0.9em]
$4$ & (6) & $\big(\frac{1}{4}, \frac{1}{2}, \frac{1}{2}, \frac{3}{4}\big)$	& $\big(2,2\big)$	& $\big(3,1\big)$	& $\big(1,1\big)$  
& 													& ${\big(1,1\big)}$	&&&& 
&&&\\[0.4em]
\hline
&&&&&&&&&&&&&&\\[-0.9em]
$5$ & (11) & $\big(\frac{1}{4}, \frac{1}{3}, \frac{2}{3}, \frac{3}{4}\big)$ &				& $\big(2,2\big)$ 	& ${\big(2,1\big)}$	
&													&				& ${\big(1,1\big)}$	&&&&&& \\[0.4em]
\hline
&&&&&&&&&&&&&&\\[-0.9em]
$6$ & (10) & $\big(\frac{1}{4}, \frac{1}{4}, \frac{3}{4}, \frac{3}{4}\big)$ & 				&			   	& $\big(2,2\big)$	
&													&				&				& ${\big(1,1\big)}$	&
&${\big(1\big)}$&&&\\[0.4em]
\hline
&&&&&&&&&&&&&&\\[-0.9em]
$7$ & (14) & $\big(\frac{1}{6}, \frac{1}{2}, \frac{1}{2}, \frac{5}{6}\big)$ & 				& ${\big(3,3\big)}$	& 				& $\big(1,1\big)$ 
													& ${\big(2,1\big)}$ 	& 				& ${\big(1,2\big)}$ &  &&&&\\[0.4em]
\hline
&&&&&&&&&&&&&&\\[-0.9em]
$8$ & (8) & $\big(\frac{1}{6}, \frac{1}{3}, \frac{2}{3}, \frac{5}{6}\big)$ & 				& 				& $\big(4,2\big)$ 	& $\big(2,1\big)$ 
& & ${\big(2,1\big)}$ 
& & &&${\big(1\big)}$&&\\[0.4em]
\hline
&&&&&&&&&&&&&&\\[-0.9em]
$9$ & (12) & $\big(\frac{1}{6}, \frac{1}{4}, \frac{3}{4}, \frac{5}{6}\big)$ & 				& 				& 				& $\big(2,2\big)$ 
& & & ${\big(2,1\big)}$ 
& ${\big(1,1\big)}$ &&&& \\[0.4em]
\hline
&&&&&&&&&&&&&&\\[-0.9em]
$10$& (13) & $\big(\frac{1}{6}, \frac{1}{6}, \frac{5}{6}, \frac{5}{6}\big)$ & 				& 				& 				& 
& & & & ${\big(2,1\big)}$ 
&&$\big(3\big)$&&\\[0.4em]
\hline
&&&&&&&&&&&&&&\\[-0.9em]
$11$& (1) & $\big(\frac{1}{5}, \frac{2}{5}, \frac{3}{5}, \frac{4}{5}\big)$ & 				& $\big(3,2\big)$	& $\big(4,1\big)$	&
&& &&&&&&\\[0.4em]
\hline
&&&&&&&&&&&&&&\\[-0.9em]
$12$& (7) & $\big(\frac{1}{8}, \frac{3}{8}, \frac{5}{8}, \frac{7}{8}\big)$ & 				& 				& $\big(4,4\big)$	& 
&
& ${\big(3,1\big)}$ 
& ${\big(2,2\big)}$ & $\big(1,3\big)$ & & & ${\big(1\big)}$ &\\[0.4em]
\hline
&&&&&&&&&&&&&&\\[-0.9em]
$13$& (2) & $\big(\frac{1}{10}, \frac{3}{10}, \frac{7}{10}, \frac{9}{10}\big)$ & 			& 				& 				& 
&
& & ${\big(4,1\big)}$ 
& $\big(2,3\big)$ & &&&\\[0.4em]
\hline
&&&&&&&&&&&&&&\\[-0.9em]
$14$& (9) & $\big(\frac{1}{12}, \frac{5}{12}, \frac{7}{12}, \frac{11}{12}\big)$ & 			& 				& 				& 
&
& & ${\big(4,2\big)}$ 
&  & & & & $\big(1\big)$\\[0.4em]
\hline
\end{tabular}}
\caption{Twist parameters for operators ${}_1L_{t}^{(4)}(p, q)$ in the `hypergeometric case'}\label{tab:VHS}
\end{table}
\subsubsection{Calabi-Yau operators in the extra case}
Applying our twist construction to the elliptic K3 surfaces in Corollary~\ref{cor:periodK3mixed_transform}, we obtain the following:
\begin{corollary}
\label{cor:extra_case}
For the twisted families in Equation~(\ref{lemmaCYmixed_eq}) with generalized functional invariant $(k, l, \beta)=(1,1,1)$ of the elliptic K3 surfaces  in Corollary~\ref{cor:periodK3mixed_transform}, the period integral~(\ref{eqn:period_new}) is annihilated by the Picard-Fuchs operator 
\beq
\label{operator3tilde}
\begin{split}
  {}_2L^{(4)}_t(p, q) = \theta^4 & - 2 t  \Big( \theta+ \frac{1}{2} \Big)^2   \Big( \theta^2 + \theta + 2 p q - p - q + 1\Big) \\
  & + t^2 \Big( \theta+ \frac{1}{2} \Big) \Big( \theta+ \frac{3}{2} \Big) \Big( \theta +1 + p - q\Big) \, \Big( \theta+ 1 -p + q \Big) \,.
\end{split}
\eeq 
In particular, the period over $\Sigma_3(t)$ is holomorphic at $t=0$ and given by
\beq
\label{period3bb_tilde}
\begin{split}
  \omega =  (2\pi i)^3 \;  {}_1F_0\!\left(\left. \frac{1}{2} \right| t \right) \star 
   \left( \frac{1}{(1-t)^{\frac{1-p-q}{2}}}  \hpg21{p, q}{1 }{t} \right)^2\;,
 \end{split}
\eeq
for the values $(p, q)$ given in Table~\ref{tab:VHS5}.  
\end{corollary}
\begin{proof}
The proof follows by applying Equation~(\ref{eq:period_new}) to the periods $\omega(t)$ computed in Corollary~\ref{cor:periodK3mixed_transform}. 
\end{proof}
\begin{remark}
The Calabi-Yau operators~(\ref{operator3tilde}) obtained in Corollary~\ref{cor:extra_case} for parameters $(p, q)$, with classification number (and any alternative name used) in the AESZ database~\cite{Almkvist:aa}, are summarized in Table~\ref{tab:VHS5}. The Calabi-Yau operators are called Calabi-Yau operators of the extra case. 
\end{remark}
\begin{table}[H]
\scalebox{\Scale}{
\begin{tabular}{|c|cc|c|l|} 
\hline
$\#$ & AESZ & Name & $(p, q)$ & twist of K3 with $(M_n, k)$\\
\hline
\hline
&&&&\\[-0.9em]
1 & (17)& $35,3^*$		& $\left(\frac{1}{2}, \frac{1}{2}\right)$ & $(M_4, 3), (M_4, 4), (M_4, 5)$\\[0.4em]
2 & $-$ & $-$			& $\left(\frac{1}{3}, \frac{1}{2}\right)$ & $(M_3, 4)$\\[0.4em]
3 & (66) & $6^*$		& $\left(\frac{1}{4}, \frac{1}{2}\right)$ & $(M_2, 4)$\\[0.4em]
4 & $-$  & $14^*$ 		& $\left(\frac{1}{6}, \frac{1}{2}\right)$ & $(M_1, 4)$\\[0.4em]
5 & (39) & $4^*$		& $\left(\frac{1}{3}, \frac{1}{3}\right)$ & $(M_3, 5)$\\[0.4em]
6 & (20) & $46,4^{**}$	& $\left(\frac{1}{3}, \frac{2}{3}\right)$ & $(M_3, 3)$\\[0.4em]
7 & (45) & $8^*$		& $\left(\frac{1}{6}, \frac{1}{3}\right)$ & $(M_3, 1)$\\[0.4em]
8 & (34) & $8^{**}$		& $\left(\frac{1}{6}, \frac{2}{3}\right)$ & $(M_3, 2)$\\[0.4em]
\hline
\end{tabular}
\quad
\begin{tabular}{|c|cc|c|l|} 
\hline
$\#$ & AESZ & Name & $(p, q)$ & twist of K3 with $(M_n, k)$\\
\hline
\hline
9 & (38) & $10^*$		& $\left(\frac{1}{4}, \frac{1}{4}\right)$ & $(M_2, 5), (M_4, 1)$\\[0.4em]
10& (32) & $111,10^{**}$	& $\left(\frac{1}{4}, \frac{3}{4}\right)$ & $(M_2, 3), (M_4, 2)$\\[0.4em]
11& (40) & $13^*$		& $\left(\frac{1}{6}, \frac{1}{6}\right)$ & $(M_1, 5)$\\[0.4em]
12& (21) & $47,13^{**}$	& $\left(\frac{1}{6}, \frac{5}{6}\right)$ & $(M_1, 3)$\\[0.4em]
13& (44) & $7^*$		& $\left(\frac{1}{8}, \frac{3}{8}\right)$ & $(M_2, 1)$\\[0.4em]
14& (41) & $7^{**}$		& $\left(\frac{1}{8}, \frac{5}{8}\right)$ & $(M_2, 2)$\\[0.4em]
15& (43) & $9^*$		& $\left(\frac{1}{12}, \frac{5}{12}\right)$ & $(M_1, 1)$\\[0.4em]
16& (42) & $9^{**}$		& $\left(\frac{1}{12}, \frac{7}{12}\right)$ & $(M_1, 2)$\\[0.4em]
\hline
\end{tabular}}
\caption{Twist parameters for operators ${}_2L^{(4)}_t(p, q)$ in the `extra case'}\label{tab:VHS5}
\end{table}
\subsubsection{Calabi-Yau operators in the even case}
\label{ssec:even_case}
Applying our twist construction to the elliptic K3 surfaces in Remark~\ref{rem:double_transfo_rational}, we obtain the following:
\begin{corollary}
\label{even-case_a}
For the twisted families in Lemma~\ref{lemmaCYmixed_eq} with generalized functional invariant $(k, l, \beta)=(1,1,1)$ of the $M_n$-lattice polarized K3 surfaces in Remark~\ref{rem:double_transfo_rational}, the period integral~(\ref{eqn:period_new}) is annihilated by the Picard-Fuchs operator 
\beq
\label{L4even}
\begin{split}
 {}_3L_t^{(4)}\big(\mu, \tilde{\mu}\big)& =\theta^4 - t \, \left( 2\,\theta^{2}+2 \theta+{\tilde{\mu}}^{2}-\tilde{\mu}+1 \right)  \left( \theta + \mu \right)  \left( \theta -\mu+1 \right) \\
  & +t ^2 \,   \left( \theta+2-\mu \right)  \left( \theta+1+\mu \right)  \left( \theta + \mu \right)  \left( \theta + 1 - \mu \right) 
\end{split}
\eeq 
with $\mu=\frac{1}{2}$ and $\tilde{\mu} \in \{ \frac{1}{2}, \frac{1}{3}, \frac{1}{4}, \frac{1}{6} \}$. In particular, the period over $\Sigma_3(t)$ is holomorphic at $t=0$ and given by
\beq
\label{rk4even_a}
 \omega=(2\pi i)^3 \;\hpg21{\frac{1}{2} , \frac{1}{2} }{1}{t}  \star \left( \frac{1}{1-t} \; \hpg21{ \tilde{\mu}, 1-\tilde{\mu}}{1}{\frac{t}{t-1}} \right)\,.
\eeq  
\end{corollary}
\begin{proof}
The proof follows by applying Equation~(\ref{eq:period_new}) to the periods $\omega(t)$ computed in Remark~\ref{rem:double_transfo_rational}, and then using Equation~(\ref{eqn:cancellation}).
\end{proof}
\par To obtain the Calabi-Yau operators in Equation~(\ref{L4even}) with $\mu=\frac{1}{3}, \frac{1}{4}, \frac{1}{6}$ as Picard-Fuchs operators, we use the variant of our twist construction in Section~\ref{sssec:product_twists}. For the families of elliptic curves  $X=\tilde{X}_{141}$, $\tilde{X}_{431}$, $\tilde{X}_{321}$, $\tilde{X}_{211}$, and $X'_k$ in Table~\ref{tab:twist_rationalsurface}, we already constructed families of A-cycles $\tilde{\Sigma}_1(t)$ and $\Sigma_1(t)$, respectively, for $|t|<1$. 
\par For the twisted family in Equation~(\ref{CY3mtWEf}), we define a family of closed three-cycles $\hat{\Sigma}_3(t)$ as follows: Applying Lemma~\ref{lemma_cycle} to the elliptic curve $h^2= 4u^3 - g'_2(v)  u - g'_3(v)$, we obtain a family of A-cycles $\Sigma_1(v) \ni (u, h)$, such that $\Sigma_1(v)$ projects onto the circle $C_u=C_{1/2}(0)$, i.e., the circle $|u|=\frac{1}{2}$ in the $u$-plane with counterclockwise orientation, for every $|v|<1$. For $t \in \mathbb{C}$ and every $(v, u) \in C_v \times C_u$, a cycle $\hat{\Sigma}_1(t, v, u)$ in the elliptic fiber of Equation~(\ref{CY3mtWEf}) is obtained from $\hat{\Sigma}_1(\frac{t}{v})$, by rescaling $(X,Y) \to (h^2 v^2 X, h^3 v^3 Y)$ such that $(u, h) \in \Sigma_1(v)$. For $t \in \mathbb{C}$ with $|t|<1/2$, we obtain a continuously varying family of closed three-cycles as a warped product $\hat{\Sigma}_3(t) = C_v \times C_u \times_{(v, u)} \hat{\Sigma}_1(t, v, u)$. We have the following:
\begin{corollary}
\label{even-case_b}
For $X=\tilde{X}_{141}$, $\tilde{X}_{431}$, $\tilde{X}_{321}$, or $\tilde{X}_{211}$, the twist family of $X$ with $X'_k$ in Table~\ref{tab:twist_rationalsurface} given by Equation~(\ref{CY3mtWEf}) for $k=2,3,4$, is a family over $B$ of Jacobian elliptic Calabi-Yau threefolds over $\mathbb{F}_n$ with $n=0, \dots, k$. The period integral~(\ref{eqn:period_new_producttwist}) is annihilated by the Picard-Fuchs operator ${}_3L_t^{(4)}\big(\mu, \tilde{\mu}\big)$ in Equation~(\ref{L4even}). In particular, the period over $\hat{\Sigma}_3(t)$ is holomorphic at $t=0$ and given by
\beq
\label{rk4even_b}
 \hat{\omega}=(2\pi i)^3 \;\hpg21{\mu , 1-\mu }{1}{t}  \star \left( \frac{1}{1-t} \; \hpg21{ \tilde{\mu}, 1-\tilde{\mu}}{1}{\frac{t}{t-1}} \right)
\eeq  
with $\mu \in \{  \frac{1}{3}, \frac{1}{4}, \frac{1}{6} \}$ and $\tilde{\mu} \in \{ \frac{1}{2}, \frac{1}{3}, \frac{1}{4}, \frac{1}{6} \}$.
\end{corollary}
\begin{proof}
The proof follows by applying Proposition~\ref{prop:period_integral_general_producttwist} and Equation~(\ref{eq:period_new_producttwist}) to the periods $\omega(t)$ computed in Corollary~\ref{lemma2}.
\end{proof}
\begin{remark}
The Calabi-Yau operators~(\ref{L4even}) obtained in Corollary~\ref{even-case_a} and Corollary~\ref{even-case_b} for parameters $(\mu,\tilde{\mu})$, with their classification number (and any alternative name used) in the AESZ database~\cite{Almkvist:aa}, are summarized in Table~\ref{tab:VHS4}. The Calabi-Yau operators are called Calabi-Yau operators of the even case. 
\end{remark}
\begin{table}
\scalebox{\ScaleBig}{
\begin{tabular}{|c|cc|c|} 
\hline
& & & \\[-0.9em]
\# & AESZ & Name & $(\mu,\tilde{\mu})$ \\
\hline
\hline
& & & \\[-0.9em]
1& (32)  & 111	& $ \left(\frac{1}{2},\frac{1}{2}\right)$ \\[0.4em]
2& (31) & 110	& $ \left(\frac{1}{3},\frac{1}{2}\right)$ \\[0.4em]
3& (15) & 30 	& $ \left(\frac{1}{4}, \frac{1}{2}\right)$ \\[0.4em]
4& (33)&112	& $ \left(\frac{1}{6},\frac{1}{2}\right)$ \\[0.4em]
\hline
5& (34)  & 141, 8** 	& $ \left(\frac{1}{2},\frac{1}{3}\right)$ \\[0.4em]
6& (35)  & 142		& $ \left(\frac{1}{3},\frac{1}{3}\right)$ \\[0.4em]
7& -     & 196 		& $ \left(\frac{1}{4}, \frac{1}{3}\right)$ \\[0.4em]
8& (36)&143		& $ \left(\frac{1}{6},\frac{1}{3}\right)$ \\[0.4em]
\hline
\end{tabular}
\quad
\begin{tabular}{|c|cc|c|} 
\hline
& & & \\[-0.9em]
\# & AESZ & Name & $(\mu,\tilde{\mu})$ \\
\hline
\hline
9& (41)  & 189, 7** 	& $ \left(\frac{1}{2},\frac{1}{4}\right)$ \\[0.4em]
10& (46)  & 194		& $ \left(\frac{1}{3},\frac{1}{4}\right)$ \\[0.4em]
11& (48)& 197 		& $ \left(\frac{1}{4}, \frac{1}{4}\right)$\\[0.4em]
12 & (50)&199		& $ \left(\frac{1}{6},\frac{1}{4}\right)$ \\[0.4em]
\hline
13& (42)  & 190, 9**	& $ \left(\frac{1}{2},\frac{1}{6}\right)$  \\[0.4em]
14& (47) &	195	& $ \left(\frac{1}{3},\frac{1}{6}\right)$ \\[0.4em]
15& (49)& 198	& $ \left(\frac{1}{4}, \frac{1}{6}\right)$ \\[0.4em]
16 & (23)& 61	& $ \left(\frac{1}{6}, \frac{1}{6}\right)$\\[0.4em]
\hline
\end{tabular}}
\caption{Twist parameters for the operators ${}_3L^{(4)}_t(p, q)$ in the `even case'}\label{tab:VHS4}
\end{table}
\subsection{Calabi-Yau operators in the odd case}
In this section we describe a fourth step in our iterative construction that produces families of (singular) elliptic Calabi-Yau fourfolds with section over $\mathbb{P}^1 \times \mathbb{P}^1 \times \mathbb{P}^1$ that realize all 14 one-parameter variations of Hodge structure of weight four and type $(1, 1, 1, 1, 1)$ over a one-dimensional rational deformation space of the so-called odd case. The families arise as twisted families of the elliptic Calabi-Yau threefolds of the hypergeometric case, previously obtained in Section~\ref{sssec:hg}. Applying our twist construction, we obtain their Weierstrass model as twisted families with generalized functional invariant $(m, n, \gamma)=(1,1,1)$. We have the following:
\begin{lemma} 
\label{lemmaCY4mixed}
For every family of threefolds from Section~\ref{sssec:hg}, the twisted family with generalized functional invariant $(1,1,1)$, given by the Weierstrass equation
\beq
\label{lemmaCYodd_eq}
\begin{split}
 Y^2 = 4 X^3  - \underbrace{g_2\left(  - \frac{t}{w (w+1)}, u, v \right) \, v^4 (v+1)^{4} }_{=: \, g_2(t, u, v, w)}  X  - \underbrace{g_3\left(  - \frac{t}{w (w+1)},u  \right)  \, w^6  (w+1)^{6} }_{=: \, g_3(t, u, v, w) }   \,,
 \end{split}
\eeq
defines a family (over $B$) of Jacobian elliptic Calabi-Yau fourfolds over $\mathbb{P}^1 \times \mathbb{P}^1  \times \mathbb{P}^1$.
\end{lemma}
\begin{proof}
The construction is an application of the general construction in Section~\ref{ssec:twisting}. One first checks that Equation~(\ref{lemmaCYodd_eq}) defines a minimal Weierstrass model for every family of threefolds from Section~\ref{sssec:hg} with affine coordinates $u, v, w \in \mathbb{C}$ and $t \in B=\mathbb{P}^1 \backslash \lbrace 0, 1, \infty \rbrace$. The Weierstrass equation~(\ref{lemmaCYodd_eq}) extends to $\mathbb{P}^1 \times \mathbb{P}^1 \times \mathbb{P}^1$, since we obtain a minimal and normal Weierstrass equation when introducing projective variables $[u_0:u_1]\in\mathbb{P}^1$, $[v_0:v_1]\in\mathbb{P}^1$, $[w_0:w_1]\in\mathbb{P}^1$, and $[x:y:z] \in \mathbb{P}(2,3,1)$ and writing each fiber as the hypersurface
\beq
\label{CY4mtWEa}
y^{2} z  = 4 x^{3} - g_2\left( t, \frac{u_0}{u_1}, \frac{v_0}{v_1},  \frac{w_0}{w_1} \right) \, u_1^8 v_1^8 w_1^8 x z^2  
- g_3\left(t,  \frac{u_0}{u_1}, \frac{v_0}{v_1}, \frac{w_0}{w_1} \right) \, u_1^{12} v_1^{12} w_1^{12} z^3 \,.
\eeq
Four $\mathbb{C}^*$-groups act on the defining variables in Equation~(\ref{CY4mtWEa}) and are  given by the weights listed in Table~\ref{tab:deg4} where \emph{deg} denotes the total weight of Equation~(\ref{CY4mtWEa})  and \emph{sum} denotes the sum of weights of the defining variables. Since the conditions are satisfied that for each $\mathbb{C}^*$-group the total weight equals the sum of weights, a Calabi-Yau fourfold is obtained by removing the loci $\lbrace s_0 = s_1 = 0\rbrace$, $\lbrace u_0 = u_1 = 0\rbrace $, $\lbrace v_0 = v_1 = 0\rbrace$, $\lbrace x = y = z = 0\rbrace$ from the solution set of Equation~(\ref{CY4mtWEa}) and taking the quotient $(\mathbb{C}^*)^4$. 
\begin{table}[H]
\scalebox{\ScaleBig}{
\begin{tabular}{|c|c|ccc|cc|cc|cc|c|}
\hline
$\mathbb{C}^*$ & deg & $x$ & $y$ & $z$ & $u_0$ & $u_1$ & $v_0$ & $v_1$ & $w_0$ & $w_1$ & $\Sigma$ \\
\hline
\hline
$\lambda_1$ & $3$ & $1$ & $1$ & $1$ & $0$ & $0$ & $0$ & $0$ & $0$ & $0$ & $3$\\
$\lambda_2$ & $12$ & $4$ & $6$ & $0$ & $1$ & $1$ & $0$ & $0$ & $0$ & $0$ & $12$\\
$\lambda_3$ & $12$ & $4$ & $6$ & $0$ & $0$ & $0$ & $1$ & $1$ & $0$ & $0$ & $12$\\
$\lambda_4$ & $12$ & $4$ & $6$ & $0$ & $0$ & $0$ & $0$ & $0$ & $1$ & $1$ & $12$\\
\hline
\end{tabular}
\caption{Weights of variables in Weierstrass equation}\label{tab:deg4}}
\end{table}
\end{proof}
\par For each twisted family in Equation~(\ref{lemmaCYodd_eq}), we define a family of closed four-cycles $\Sigma_4(t)$ as follows: For $t \in \mathbb{C}$ with $|t|<1/2$, we start with the circle $C=C_{1/2}(0)$, given by $|w|=\frac{1}{2}$ in the $w$-plane with counterclockwise orientation. For every $w \in C$, a three-cycle $\Sigma_3'(t, v)$ in the fiber is obtained from $\Sigma_3(-\frac{t}{w (w+1)})$, $\Sigma_3(t)$ was defined in Section~\ref{ssec:3folds_hg_even}, by rescaling $(u, v, X,Y) \to (u, v, w^2 (w+1)^{2} X, w^3 (w+1)^{3} Y)$. For $t\in \mathbb{C}$ with $|t|<1/2$, we obtain a continuously varying family of closed four-cycles as a warped product $\Sigma_4(t) = C \times_w \Sigma'_3(t, w)$.  We have the following:
\begin{corollary}
\label{odd-case}
For the twisted families in Lemma~\ref{lemmaCY4mixed} with generalized functional invariant $(1,1,1)$, the period integral~(\ref{eqn:period_new}) is annihilated by the self-adjoint, rank-five Picard-Fuchs operator 
\beq
\label{operator4}
  L^{(5)}_t(p, q) = \theta^{5} - t   \Big( \theta+ \frac{1}{2} \Big)    \Big( \theta + p \Big)   \Big( \theta + q\Big)     \Big( \theta + 1-p \Big)   \Big( \theta + 1-q\Big)   \,.
\eeq
In particular, the period over $\Sigma_4(t)$ is holomorphic at $t=0$ and given by
\beq
\label{period_lemmaCY4pure}
 \omega  = (2 \pi i)^4 \;\hpg54{p, q, \frac{1}{2}, 1-q, 1-p}{ 1, 1, 1, 1}{t} \;
\eeq
for the values $(p, q)$ given in Table~\ref{tab:VHS}.  
\end{corollary}
\begin{proof}
The proof follows by applying Equation~(\ref{eq:period_new}) to the periods $\omega(t)$ computed in Corollary~\ref{hg-case}. One then checks that the Hadamard product in Proposition~\ref{prop:period_integral_general} produces a hypergeometric function of type ${}_5F_4$ using Equation~(\ref{eqn:cancellation}).
\end{proof}
We have the following:
\begin{corollary}
\label{YY5operators_sqr} 
The differential operators ${}_4L_t^{(p, q)}$, given by
\beq
\label{opYY1hat}
\begin{split}
& {}_4L^{(4)}_t(p, q)   =
 \theta^{4} 
-  \frac{1}{4} \, t\, \Big( 8 \, \theta^4 + 16 \, \theta^3 - 2 \, ({p}^{2}+q^2-p-q-9) \, \theta^{2}  \\
   & \quad  - 2\, (p^{2}+q^2-p-q-5)\, \theta +  
    2  +p+ q - p q - {p}^{2}- q^2 +p^2 q +p\,{q}^{2} +{p}^{2}{q}^{2} \Big)\\
+ & \frac{1}{16}\,t^2 \, \left( 2\,\theta+2+p-q \right)  \left( 2\,\theta+1+
p+q \right)  \left( 2\,\theta+2-p+q \right) 
 \left( 2\,\theta+3-p-q \right) ,
\end{split}
\eeq
are the Yifan-Yang pullbacks of the operators $L^{(5)}_t(p, q)$ in Corollary~\ref{odd-case} of minimal degree (in $t$), for the values $(p, q)$ given in Table~\ref{tab:VHS_YYPB}.
\end{corollary}
\begin{proof}
The proof follows directly from Proposition~\ref{YY5operators}.
\end{proof}
\begin{remark}
The Calabi-Yau operators~(\ref{opYY1hat}) obtained in Corollary~\ref{YY5operators_sqr} for parameters $(p, q)$, with their classification number (and any alternative name used) in the AESZ database~\cite{Almkvist:aa}, are summarized in Table~\ref{tab:VHS_YYPB}. The Calabi-Yau operators are called Calabi-Yau operators of the odd case. 
\end{remark}
\begin{table}
\scalebox{\ScaleBig}{
\begin{tabular}{|c|cc|c|}
\hline
&&& \\[-0.8em]
\# & AESZ & Name & $(p, q)$\\
\hline
\hline
&&&\\[-0.9em]
1 & (51)& $\tilde{3}, 204$ & $\left(\frac{1}{2},\frac{1}{2}\right)$ \\[0.2em]
2  & (92)& $\tilde{5}$ & $\left(\frac{1}{3},\frac{1}{2}\right)$\\[0.2em]
3  & (91)& $\tilde{4}$ & $\left(\frac{1}{3},\frac{1}{3}\right)$\\[0.2em]
4 & (93) & $\tilde{6}$ & $\left(\frac{1}{4},\frac{1}{2}\right)$ \\[0.2em]
5 & (98) & $\tilde{11}$ & $\left(\frac{1}{4},\frac{1}{3}\right)$ \\[0.2em]
6 & (97) & $\tilde{10}$ & $\left(\frac{1}{4},\frac{1}{4}\right)$ \\[0.2em]
7 & (101) & $\tilde{14}$ & $\left(\frac{1}{6},\frac{1}{2}\right)$ \\[0.2em]
\hline
\end{tabular}
\quad
\begin{tabular}{|c|cc|c|}
\hline
&&& \\[-0.8em]
\# & AESZ & Name & $(p, q)$\\
\hline
\hline
8 & (95) & $\tilde{8}$ & $\left(\frac{1}{6},\frac{1}{3}\right)$ \\[0.2em]
9 & (99) & $\tilde{12}$ & $\left(\frac{1}{6},\frac{1}{4}\right)$ \\[0.2em]
10 & (100) & $\tilde{13}$ & $\left(\frac{1}{6},\frac{1}{6}\right)$ \\[0.2em]
11 & (89) & $\tilde{1}$ & $\left(\frac{1}{5},\frac{2}{5}\right)$ \\[0.2em]
12 & (94)& $\tilde{7}$ & $\left(\frac{1}{8},\frac{3}{8}\right)$ \\[0.2em]
13 & (90) & $\tilde{2}$ & $\left(\frac{1}{10},\frac{3}{10}\right)$ \\[0.2em]
14 & (96) & $\tilde{9}$ & $\left(\frac{1}{12},\frac{5}{12}\right)$ \\[0.2em]
\hline
\end{tabular}}
\caption{Twist parameters for the operators ${}_4L^{(4)}_t(p, q)$ in the `odd case'}\label{tab:VHS_YYPB}
\end{table}
\section{Proof of Theorem~\ref{DoranMalmendier}}
\label{proof}
In Section~\ref{prelim} we have defined an iterative construction that produces families of elliptically fibered Calabi-Yau $n$-folds with section 
from families of elliptic Calabi-Yau varieties  of one dimension lower by a combination of a quadratic twist and a rational base transformation encoded in the
generalized functional invariant. Moreover, all Weierstrass models are obtained through a sequence of constructions that start with
the quadric pencil in Equation~(\ref{quadric_pencil}).  Each step $n= 1, 2, 3, 4$ of our iterative construction has also provided a family of a closed transcendental $n$-cycle 
for each family of $n$-folds as the warped product of the corresponding transcendental cycle in dimension $n-1$. Upon integration of this cycle with the holomorphic $n$-form we obtain a period for the family of elliptically fibered Calabi-Yau $n$-folds with section. By construction, the period is holomorphic on the unit disk about the point $t=0$ of maximally unipotent monodromy. Each holomorphic period is then annihilated by a Picard-Fuchs operator which is a Calabi-Yau operator in the sense of \cite{MR2282972}. 
\par The proof of Theorem~\ref{DoranMalmendier} proceeds as follows:
Bogner and Reiter classified all $\operatorname{Sp}(4,\mathbb{C})$-rigid, quasi-unipotent local systems which have a maximal unipotent element and are induced by fourth-order Calabi-Yau operators. In particular, they obtained explicit expressions for all Calabi-Yau operators and closed formulas for special solutions of them. 
We prove that we have realized all of these operators and holomorphic periods. There are the four cases:
\begin{enumerate}
\item The \emph{hypergeometric case} consist of 14 operators called $P_1(4,10,4)$ \cite{MR2980467}*{Theorem~6.1}. These operators precisely coincide with the 14 operators of Equations~(\ref{L4mu}) and (\ref{L4mu18}) obtained by the twist construction for parameters given in Table~\ref{tab:VHS}.  
\item The \emph{extra case} consist of 16 operators called $P_2(4,6,6)$ \cite{MR2980467}*{Theorem~6.3}.  These operators precisely coincide with the 16 operators of Equation~(\ref{operator3tilde}) obtained by the twist construction for parameters listed in Table~\ref{tab:VHS5}.  
\item The \emph{even case} consist of 16 operators called $P_2(4,6,8)$ \cite{MR2980467}*{Theorem~6.4}.  These operators precisely coincide with the 16 operators of Equation~(\ref{rk4even_a}) obtained by the twist construction for parameters listed in Table~\ref{tab:VHS4}. 
\item The \emph{odd case} consist of 14 operators called $P_1(4,8,4)$ \cite{MR2980467}*{Theorem~6.2}.  These operators precisely coincide with the Yifan-Yang pullbacks in Equations~(\ref{opYY1})  of the 14 operators of Equations~(\ref{operator4})  obtained by the twist construction for the parameters listed in Table~\ref{tab:VHS_YYPB}. 
\end{enumerate}
\hfill $\square$
\begin{remark}
\label{rem:rigid_irred}
We constructed all symplectically rigid Calabi-Yau operators as Picard-Fuchs operators of families of Calabi-Yau varieties. These operators are rank-four, degree-two, irreducible Calabi-Yau operators with three regular singular points.  In addition to these operators, there are four additional rank-four, degree-two, irreducible Calabi-Yau operators with three regular singular points in the AESZ database~\cite{Almkvist:aa}. The additional cases, 84, 254, 255, 406, have as degree-one term an irreducible polynomial (over $\mathbb{Q}[t]$) of degree four; see Remark~\ref{rem:non-rigid_irred}.
\end{remark}
\section{Beyond symplectically rigid Calabi-Yau operators}
\label{sec:beyond}
\subsection{Calabi-Yau operators from Heun's equation}
Heun's equation is the rank-two, linear ordinary differential equation of the form
\beq
\label{heun_ode}
 \left(  \dfrac{d^2}{dt^2} + \left(\dfrac{\gamma}{t} + \dfrac{\delta}{t-1} + \dfrac{\epsilon}{u-a} \right) \dfrac{d}{dt} + \dfrac{\alpha \beta t -q}{t (t-1) (t-a)} \right)   \omega(t) =0 \;,
\eeq
such that $\epsilon=\alpha+\beta-\gamma-\delta+1$ to ensure that the point at infinity is a regular singular point. The parameter $q \in \mathbb{C}$ is called the accessory parameter. For $a \in \mathbb{C}$ and $a \not= 0, 1$, Heun's equation has four regular singular points at $0, 1, a, \infty$ and the Riemann symbol
\beq
\label{RiemannSymbol_heun}
 \mathcal{P}\left. \left(\begin{array}{cccc}
 0 & 1 & a & \infty \\ \hline  
 1-\gamma & 1-\delta & 1-\epsilon & \alpha \\ 
 0 & 0 & 0 & \beta
  \end{array} \right| t \right) \,.
\eeq
Every rank-two linear ordinary differential equation with at most four regular singular points can be transformed into this equation by a change of variable.  The function $\heun(a, q; \alpha,\beta,\gamma,\delta \, | t)$ is the unique solution of Heun's differential equation that is holomorphic and 1 at the singular point $t=0$.  We have the following:
\begin{lemma}
\label{lem:Heun}
The function $\omega(t)=\heun(a, q; 1,1,1,1 \, | t)$ is the unique solution of $L^{(2)}_t \omega(t)=0$, holomorphic and 1 at $t=0$, with
\beq
\label{eqn:L2heun}
 L^{(2)}_t\big(a, q\big)   = \theta^2 -  \frac{t}{a}   \big((a+1) \theta^2 +(a+1) \theta + q \big) + \frac{t^2}{a}  \big(\theta+1\big)^2 \,.
\eeq 
For $\alpha \in (0,1) \cap \mathbb{Q}$, the function $\omega(t)=\heun(a,\frac{q}{4};\alpha,1-\alpha,1,\frac{1}{2} \, |t )^2$ is the unique solution of $L^{(3)}_t \omega(t)=0$, holomorphic and 1 at $t=0$, with
\beq
\label{eqn:1L3heun}
\begin{split}
 L^{(3)}_t\big(\alpha; a, q\big)   = \theta^3 & -  \frac{t}{2a}   \big(2\theta+1\big)  \big((a+1) \theta^2 +(a+1) \theta + q \big)\\
 & +  \frac{t^2}{a}  \big(\theta+2 \alpha\big)  \big(\theta+2(1-\alpha)\big) \big(\theta+1\big) \,.
\end{split}
\eeq
We also have the following identity involving a Hadamard product:
\beq
\label{eqn:heun_identity}
  \heun\left(a,\frac{q}{4};\frac{1}{4},\frac{3}{4},1,\frac{1}{2} \, \Big|t \right)^2 =  {}_1F_0\!\left(\left. \frac{1}{2} \right| t\right) \star  \heun(a, q; 1,1,1,1 \, | t) \,.
\eeq
\end{lemma}
\begin{proof}
The proof follows by explicit computation.
\end{proof}
To obtain Calabi-Yau operators, the following lemma is essential:
\begin{lemma}
\label{lem:CYO_nonrigid}
For $\alpha \in (0,1) \cap \mathbb{Q}$, the function 
\beqn
 \omega(t)=  {}_1F_0\!\left(\left. \frac{1}{2} \right| t\right) \star \heun\left(a,\frac{q}{4};\alpha,1-\alpha,1,\frac{1}{2} \, \Big|t \right)^2
 \eeqn
 is the unique solution of ${}_1L^{(4)}_t \omega(t)=0$, holomorphic and 1 at $t=0$, with
\beq
\label{eqn:CYO_4a}
\begin{split}
 {}_1L^{(4)}_{t} \big(\alpha; a, q \big)  = \theta^4  & - \frac{t}{4a}  \big(2\theta+1\big)^2 \big((a+1) \theta^2 + (a+1) \theta + q \big)  \\
 & + \frac{t}{4a} \big(2\theta+1\big)  \big(2\theta+3\big) \big(\theta+2\beta\big)  \big(\theta+2(1-\beta)\big) \,.
 \end{split}
\eeq
For $\alpha \in (0,1) \cap \mathbb{Q}$, the function 
\beqn
 \omega(t)=  \hpg21{\alpha, 1-\alpha}{1}{t}   \star \heun(a, q; 1,1,1,1 \, | t)
 \eeqn
 is the unique solution of $_{2}L^{(4)}_t \omega(t)=0$, holomorphic and 1 at $t=0$, with
\beq
\begin{split}
\label{eqn:CYO_4b}
 _{2}L^{(4)}_{t}\big(\alpha ; a, q \big)  =\theta^4  & - \frac{t}{a}  \big(\theta+\alpha \big)  \big(\theta+1-\alpha\big)  \big((a+1) \theta^2 +(a+1) \theta + q \big)   \\
 & +  \frac{t^2}{a}   \big(\theta+\alpha \big)  \big(\theta+1-\alpha\big)  \big(\theta+\alpha +1\big)  \big(\theta+2-\alpha\big)  \,.
\end{split}
\eeq
\end{lemma}
\begin{proof}
The proof follows by explicit computation.
\end{proof}
The rank-four operators in Equations~(\ref{eqn:CYO_4a}) and~(\ref{eqn:CYO_4b}) have four regular singular points at $0,1, a, \infty$. In particular, they are not symplectically rigid operators. Notice that rescaling $t \mapsto \lambda a t$ leaves the operator $\theta$ invariant and allows us to clear denominators. We then have the following:
\begin{proposition}
The rank-four operators ${}_iL^{(4)}_{\lambda a t}\big(\alpha ; a, q \big)$ with $i=1,2$ in Equation~(\ref{eqn:CYO_4a}) and Equation~(\ref{eqn:CYO_4b}), with parameters $(\alpha; a; q;\lambda)$ given in Table~\ref{tab:VHS6}, constitute all 33 rank-four, degree-two Calabi-Yau operators in the AESZ database~\cite{Almkvist:aa} with four regular singular points whose degree-one term is not an irreducible polynomial (over $\mathbb{Q}[t]$) of degree four.
\end{proposition}
\begin{proof}
The proof follows by explicit computation.
\end{proof}
\begin{remark}
The fact that there are two solutions for each entry in Table~\ref{tab:VHS6} is due to the following identity for Heun functions
\beqn
 \heun(a, q; 1,1,1,1 | t ) =  \heun\left(\frac{1}{a},\frac{q}{a}; 1,1,1,1 \Big| \frac{t}{a} \right) \,.
 \eeqn
\end{remark}
\begin{remark}
\label{rem:non-rigid_irred}
In the AESZ database~\cite{Almkvist:aa}, there are 36 rank-four, degree-two, irreducible Calabi-Yau operators with four regular singular points. Three additional cases, 18, 182, 205, that do not appear in Table~\ref{tab:VHS6} have as degree-one term an irreducible polynomial (over $\mathbb{Q}[t]$) of degree four.
\end{remark}
\begin{table}
\scalebox{\Scale}{
\begin{tabular}{|c|c|} 
\hline
AESZ & $(\alpha; a, q; \lambda)$ in $_{1}L^{(4)}_{\lambda a t}\big(\alpha ; a, q \big)$\\[0.2em]
\hline
\hline
&\\[-0.9em]
16 	& $\left(\frac{1}{2}, 4, 2, 64\right), \left(\frac{1}{2}, \frac{1}{4},\frac{1}{2},16\right)$  \\[0.4em]
29 	& $\left(\frac{1}{2}, 577\pm408 \sqrt{2}, 170 \pm 120 \sqrt{2}, 68 \pm 48 \sqrt{2}\right)$ \\[0.4em]
41 	& $\left(\frac{1}{2}, \frac{17}{81}\pm i \frac{56}{81} \sqrt{2}, \frac{14}{27} \pm i \frac{8}{27} \sqrt{2}, 28 \pm i \, 16 \sqrt{2}\right)$ \\[0.4em]
42 	& $\left(\frac{1}{2}, 17 \pm 12 \sqrt{2}, 6 \pm 4 \sqrt{2}, 48 \pm 32 \sqrt{2}\right)$ \\[0.4em]
184 	& $\left(\frac{1}{2}, \frac{117}{125}\pm i \frac{44}{125}, \frac{22}{25} \pm i \frac{4}{25}, 44 \pm i \,  8\right)$\\[0.4em]
185	& $\left(\frac{1}{2}, -7 \pm 4 \sqrt{3}, -2 \pm \frac{4}{3} \sqrt{3}, 36 \mp 24 \sqrt{3}\right)$\\[0.4em]
\hline
&\\[-0.9em]
25 	& $ \left(\frac{1}{4}, \frac{-123 \pm 55 \sqrt{5}}{2} , \frac{-33 \pm 15 \sqrt{5}}{2}, 88\mp 40 \, \sqrt{5} \right)$\\[0.4em]
36 	& $ \left(\frac{1}{4}, 2,1,128\right), \left(\frac{1}{4}, \frac{1}{2},\frac{1}{2}, 64\right)$\\[0.4em]
45 	& $ \left(\frac{1}{4}, -8,-2,128\right), \left(\frac{1}{4}, -\frac{1}{8},\frac{1}{4},-16\right)$\\[0.4em]
58 	& $ \left(\frac{1}{4}, 9,3,144\right), \left(\frac{1}{4}, \frac{1}{9},\frac{1}{3},16\right)$\\[0.4em]
133 	& $ \left(\frac{1}{4}, \frac{1 \pm i \sqrt{3}}{2} , \frac{3 \pm i \sqrt{3}}{6}, 72 \pm 34 \, i \sqrt{3}\right)$\\[0.4em]
137	& $ \left(\frac{1}{4}, \frac{9}{8},\frac{3}{4},144\right), \left(\frac{1}{2}, \frac{8}{9},\frac{2}{3},128\right)$\\[0.4em]
\cline{1-2}
&\\[-0.9em]
18 	& $\left(\frac{3}{8}, -4, -1, 64\right), \left(\frac{1}{2}, -\frac{1}{4},\frac{1}{4},-16\right)$\\[0.4em]
183 	& $\left(\frac{3}{8}, \frac{4}{3}, 1, 64\right), \left(\frac{3}{8}, \frac{3}{4},\frac{3}{4},48\right)$\\[0.4em]
\cline{1-2}
&\\[-0.9em]
26 	& $ \left(\frac{1}{3}, -27, -8, 108 \right), \left(\frac{1}{3}, -\frac{1}{27},\frac{8}{27},-4\right)$\\[0.4em]
\hline
\end{tabular}
\quad
\begin{tabular}{|c|c|} 
\hline
AESZ & $(\alpha; a, q; \lambda)$ in $_{2}L^{(4)}_{\lambda a t}\big(\alpha ; a, q \big)$\\[0.2em]
\hline
\hline
&\\[-0.9em]
48 	& $ \left(\frac{1}{3}, 2,1,216\right), \left(\frac{1}{3}, \frac{1}{2},\frac{1}{2},108\right)$\\[0.4em]
38 	& $ \left(\frac{1}{4}, 2,1,512\right), \left(\frac{1}{4}, \frac{1}{2},\frac{1}{2},256\right)$\\[0.4em]
65 	& $ \left(\frac{1}{6}, 2,1,3456\right), \left(\frac{1}{6}, \frac{1}{2},\frac{1}{2},1728\right)$\\[0.4em]
\hline
&\\[-0.9em]
134 	& $ \left(\frac{1}{3}, \frac{1 \pm i \sqrt{3}}{2} , \frac{3 \pm i \sqrt{3}}{6}, \frac{243 \pm 81 \, i \sqrt{3}}{2}\right)$\\[0.4em]
135 	& $ \left(\frac{1}{4}, \frac{1 \pm i \sqrt{3}}{2} , \frac{3 \pm i \sqrt{3}}{6}, 288 \pm 96 \, i \sqrt{3}\right)$\\[0.4em]
136 	& $ \left(\frac{1}{6}, \frac{1 \pm i \sqrt{3}}{2} , \frac{3 \pm i \sqrt{3}}{6}, 1944 \pm 648 \, i \sqrt{3}\right)$\\[0.4em]
\hline
&\\[-0.9em]
24	& $ \left(\frac{1}{3}, \frac{-123 \pm 55 \sqrt{5}}{2} , \frac{-33 \pm 15 \sqrt{5}}{2}, \frac{297 \mp 135 \sqrt{5}}{2}\right)$\\[0.4em]
51 	& $ \left(\frac{1}{4}, \frac{-123 \pm 55 \sqrt{5}}{2} , \frac{-33 \pm 15 \sqrt{5}}{2}, 352\mp 160 \sqrt{5} \right)$\\[0.4em]
63	& $ \left(\frac{1}{6}, \frac{-123 \pm 55 \sqrt{5}}{2} , \frac{-33 \pm 15 \sqrt{5}}{2}, 2376 \mp 1080 \sqrt{5}\right)$\\[0.4em]
\hline
&\\[-0.9em]
15 	& $ \left(\frac{1}{3}, -8,-2,216\right), \left(\frac{1}{3}, -\frac{1}{8},\frac{1}{4},-27\right)$\\[0.4em]
68 	& $ \left(\frac{1}{4}, -8,-2,512\right), \left(\frac{1}{4}, -\frac{1}{8},\frac{1}{4},-64\right)$\\[0.4em]
62 	& $ \left(\frac{1}{6}, -8,-2,3456\right), \left(\frac{1}{6}, -\frac{1}{8},\frac{1}{4},-432\right)$\\[0.4em]
\hline
&\\[-0.9em]
70 	& $ \left(\frac{1}{3}, 9,3,243\right), \left(\frac{1}{3}, \frac{1}{9},\frac{1}{3},27\right)$\\[0.4em]
69 	& $ \left(\frac{1}{4}, 9,3,576\right), \left(\frac{1}{4}, \frac{1}{9},\frac{1}{3},64\right)$\\[0.4em]
64 	& $ \left(\frac{1}{6}, 9,3,3888\right), \left(\frac{1}{6}, \frac{1}{9},\frac{1}{3},432\right)$\\[0.4em]
\hline
&\\[-0.9em]
138	& $ \left(\frac{1}{3}, \frac{9}{8},\frac{3}{4},243\right), \left(\frac{1}{3}, \frac{8}{9},\frac{2}{3},216\right)$\\[0.4em]
139	& $ \left(\frac{1}{4}, \frac{9}{8},\frac{3}{4},576\right), \left(\frac{1}{4}, \frac{8}{9},\frac{2}{3},512\right)$\\[0.4em]
140	& $ \left(\frac{1}{6}, \frac{9}{8},\frac{3}{4},3888\right), \left(\frac{1}{6}, \frac{8}{9},\frac{2}{3},3456\right)$\\[0.4em]
\hline
\end{tabular}}
\caption{Non-rigid Calabi-Yau operators from Lemma~\ref{lem:CYO_nonrigid}}\label{tab:VHS6}
\end{table}
\subsection{Realizing non-rigid Calabi-Yau operators}
The twisted families of Section~\ref{ssec:ec_families} are precisely the extremal families of elliptic curves with three singular fibers and rational total space  (up to quadratic twist and two-isogeny) classified in \cite{MR867347}*{Tab.~5.2}.  Miranda and Persson also classified the extremal families of elliptic curves with rational total space and four singular fibers, the highest number that can occur,  in \cite{MR867347}*{Tab.~5.3}. There are six of them, in the notation of Herfurtner denoted as $X_{5511}$, $X_{6321}$, $X_{4422}$, $X_{8211}$, $X_{3333}$, and $X_{9111}$. Analogous to Corollary~\ref{cor:modular_rational}, they are the modular elliptic surfaces for the subgroups $\Gamma_1(5)$, $\Gamma_0(6)$, $\Gamma_0(4)\cap \Gamma(2)$, $\Gamma_0(8)$, $\Gamma(3)$, and $\Gamma_0(9)$, respectively. 
\par The latter four, $X_{4422}$, $X_{8211}$, $X_{3333}$, and $X_{9111}$, are easily understood in terms of the construction in Section~\ref{sec:base_transformations}. For example, $X_{8211}$ and $X_{9111}$ are pull-backs of modular elliptic surfaces $X_{141}$ or $X_{431}$ in Table~\ref{tab:3ExtRatHg}, along the map $t \mapsto t^k$ with $k=2$ or $k=3$, respectively. Similar arguments apply to $X_{4422}$ and $X_{3333}$. Weierstrass models for the elliptic surfaces $X_{5511}$ and $X_{6321}$ for subgroups $\Gamma_1(5)$, $\Gamma_0(6)$ are given in Table~\ref{tab:4ExtRatHl}. 
\begin{table}
\scalebox{\Scale}{
\begin{tabular}{|c|c|lcl|c|c|c|c|} \hline
Surface & $G$ & \multicolumn{3}{|c|}{$g_2, g_3, \Delta, J$, sections} & \multicolumn{4}{|c|}{Ramification of $J$ and singular fibers} \\
\cline{6-9}
& $\mathrm{MW}(\pi)$ & & & & $t$ & $J$ & $m(j)$ & fiber   \\
\hline
&&&&&&&& \\[-0.9em]
$X_{5511}$            &  $\Gamma_0(5)$
  & $g_2$     & $=$ & $\frac{3}{4} c^4 t^4-9 c^3 t^3+\frac{21}{2} c^2 t^2+9 c t+\frac{3}{4}$  		& $(t^4-12 t^3+14 t^2+12 t+1=0)/c$ & $0$        & $3$ & smooth  \\[0.4em]
 & $\mathbb{Z}/5\mathbb{Z}$
  & $g_3$     & $=$ & $-\frac{1}{6} c^6 t^6+\frac{9}{4} c^5 t^5-\frac{75}{8} c^4 t^4-\frac{75}{8} c^2 t^2-\frac{9}{4} c t-\frac{1}{8}$	& $\pm\frac{i}{c}$ & $1$ & $2$ & smooth  \\[0.4em]
&& $\Delta$ & $=$ & $729 \, c^5 t^5 (c^2 t^2-11 c t-1)$ & $(t^4-18 t^3+74 t^2+18 t+1=0)/c$	& $1$ & $2$ & smooth\\[0.0em]
&& $J$         & $=$ & $\frac{(c^4 t^4-12 c^3 t^3+14 c^2 t^2+12 c t+1)^3}{1728 \, c^5 t^5 (c^2 t^2-11 c t-1)}$	  & $0, \infty$ & $\infty$  & $5$ & $2 \, I_5 \, (A_4)$     \\[0.4em]
&& $(X,Y)_{1,2}$ & $=$ & $( \frac{1}{4} c^2 t^2+\frac{3}{2} c t+\frac{1}{4}, \pm 3 \sqrt{3} c^2 t^2)$ & $(t^2-11t-1=0)/c$ & $\infty$ & $1$ & $2 \, I_1$ \\[0.1em]
\cline{6-6}
&&&&&&&& \\[-1em]
&& $(X,Y)_{3,4}$ & $=$ & $( \frac{1}{4} c^2 t^2-\frac{3}{2} c t+\frac{1}{4}, \pm 3 \sqrt{3} c t)$   & $c=\frac{11\mp 5\sqrt{5}}{2}, \; c'=\frac{11\pm 5\sqrt{5}}{2}$&&&\\[0.4em]
\hline
&&&&&&&& \\[-0.9em]
$X_{6321}$            &  $\Gamma_0(6)$
  & $g_2$     & $=$ & $\frac{3}{4} \big(t-4\big) \big(t^3+12 t^2 + 48 t-64\big)$   	& $4, 4(1-\sqrt[3]{2})$ 			& $0$	& $3$ & smooth  \\[0.4em]
  &  $\mathbb{Z}/6\mathbb{Z}$
  & $g_3$     & $=$ & $-\frac{1}{8}\big(t^2+4t-8\big)\big(t^4+8t^3+512t-512)$         & $-2 \sqrt[3]{2} (1\pm i\sqrt{3})-4$  	& $0$	& $3$ & smooth  \\[0.4em]
&& $\Delta$ & $=$ & $-729 \, t^6 \big(t-1\big) \big(t+8\big)^2$					& $4 \big(t^4+2 t^3+8 t-2=0\big)$ 	& $1$ 	& $2$ & smooth \\[0.2em]
&& $J$         & $=$ & $- \frac{(t-4)^3(t^3+12t^2+48t-64)^3}{1728t^6(t-1)(t+8)^2}$& $2 \pm 2 \sqrt{3}$				& $1$	& $2$ & smooth \\[0.4em]
&& $(X,Y)_{1}$  &  $=$ & $(-\frac{1}{2}t^2-2t+4,0)$							& $0$						& $\infty$	& $6$ & $I_6 \; (A_5)$ \\[0.4em]
&& $(X,Y)_{2,3}$ &  $=$ & $(\frac{1}{4}t^2-2t+4,\pm3\sqrt{3}t^2)$				& $1$						& $\infty$	& $1$ & $I_1$ \\[0.4em]
&& $(X,Y)_{4,5}$ &  $=$ & $(\frac{1}{4}t^2+4t+4,\pm3\sqrt{3}t(t+8))$			& $-8$						& $\infty$	& $2$ & $I_2 \; (A_1)$ \\[0.4em]
&&&&& $\infty$ & $\infty$ & $3$ & $I_3 \; (A_2)$ \\[0.4em]
\hline
\end{tabular}
\medskip
\caption{Extremal rational fibrations}\label{tab:4ExtRatHl}}
\end{table}
\par It is straight forward to work out corresponding versions of Corollary~\ref{curve_reduction}. In fact, the period integral of $dX/Y$ over a family of suitable A-cycles $\Sigma_1(t)$ in the neighborhood of $t=0$ are Heun functions, and we use rational transformations on the parameter curve to re-arrange the four singular points if necessary. We have the following:
\begin{corollary}
\label{lemma2heun}
For the families of elliptic curves over $\mathbb{P}^1 \backslash \{0, 1, a, \infty\}$, $X_{5511}$, $X_{6321}$, $X_{8211}$,  and $X_{9111}$, the period integrals of $dX/Y$ are annihilated by the Picard-Fuchs operator $L^{(2)}_t(a, q)$ in Equation~(\ref{eqn:L2heun}). For each family, the period over $\Sigma_1(t)$ is $\omega(t)=\heun(a, q; 1,1,1,1 \, | t)$ and holomorphic at $t=0$, with parameters $(a, q)$ and singular fibers at $t=0, 1, a, \infty$ given in Table~\ref{tab:HeunParameters}.  
\end{corollary}
\begin{proof}

For the families of Weierstrass models we use $dX/Y$ as the holomorphic  one-form on each regular fiber. It is well-known (cf.~\cite{MR927661}) that the Picard-Fuchs equation is given by the Fuchsian system 
\beq
\label{FuchsianSystem}
 \frac{d}{dt} \left( \begin{array}{c} \omega_1 \\ \eta_1 \end{array} \right) = \left( \begin{array}{ccc} - \frac{1}{12} \frac{d \ln\Delta}{dt} && \frac{3\,\delta}{2\,\Delta} \\ \\
 - \frac{g_2 \, \delta}{8 \, \Delta}& & \frac{1}{12} \frac{d \ln \Delta}{dt} \end{array} \right) \cdot \left( \begin{array}{c} \omega_1 \\ \eta_1 \end{array} \right)  \;,
  \eeq
 where $\omega_1 = \oint_{\, \Sigma_1} \frac{dX}{Y}$ and $\eta_1 = \oint_{\, \Sigma_1} \frac{X \, dX}{Y}$ for each one-cycle $\Sigma_1$ and with $\delta=3 \, g_3 \, g_2' - 2\, g_2 \, g_3' $. The rest follows by explicit computation.
\end{proof}
\begin{table}
\scalebox{\ScaleBig}{
\begin{tabular}{|c|l|c|}
\hline
& &\\[-0.9em]
$(a, q)$ & \multicolumn{1}{|c|}{Configuration} & Surface\\
\hline
& &\\[-0.9em]
$(2,1)$ & $I_1(t=0)\oplus I_2(t=1) \oplus I_1(t=a) \oplus I_8(t=\infty)$ & $X_{8211}$ \\[0.4em]
$\left( \frac{1}{2}, \frac{1}{2} \right)$ & $I_8(t=0)\oplus I_2(t=1) \oplus I_1(t=a) \oplus I_1(t=\infty)$ & $X_{8211}$\\[0.4em]
$\left(\frac{1\pm i\sqrt{3}}{2},\frac{3\pm i\sqrt{3}}{6}\right)$ & $I_1(t=0) \oplus I_1(t=1) \oplus I_1(t=a) \oplus I_9 (t=\infty)$& $X_{9111}$\\[0.4em]
$\left(-(c')^2, -\frac{3}{c}\right)$& $I_5(t=0) \oplus I_1(t=1) \oplus I_1(t=a) \oplus I_5(t=\infty)$& $X_{5511}$\\[0.4em]
& $c=\frac{11\mp 5\sqrt{5}}{2}, \; c'=\frac{11\pm 5\sqrt{5}}{2}$ & \\[0.4em]
$(-8,-2)$ & $I_6(t=0) \oplus I_1(t=1) \oplus I_2(t=a) \oplus I_3(t=\infty)$& $X_{6321}$\\[0.4em]
$\left( -\frac{1}{8}, \frac{1}{4} \right)$ & $I_3(t=0) \oplus I_1 (t=1) \oplus I_2 (t=a) \oplus I_6 (t=\infty)$& $X_{6321}$\\[0.4em]
$(9,3)$ & $I_1(t=0)\oplus I_6(t=1) \oplus I_2(t=a) \oplus I_3(t=\infty)$& $X_{6321}$\\[0.4em]
$\left( \frac{1}{9}, \frac{1}{3} \right)$ & $I_3(t=0) \oplus I_6 (t=1) \oplus I_2 (t=a) \oplus I_1 (t=\infty)$ & $X_{6321}$\\[0.4em]
$\left( \frac{9}{8}, \frac{3}{4} \right)$ & $I_2(t=0) \oplus I_6 (t=1) \oplus I_1 (t=a) \oplus I_3 (t=\infty)$ & $X_{6321}$\\[0.4em]
$\left( \frac{8}{9}, \frac{2}{3} \right)$ & $I_2(t=0) \oplus I_1(t=1) \oplus I_6 (t=a) \oplus I_3 (t=\infty)$ & $X_{6321}$\\[0.4em]
\hline
\end{tabular}
\medskip
\caption{Parameters of Heun functions for extremal elliptic surfaces}\label{tab:HeunParameters}}
\end{table}
\par Analogous to Lemma~\ref{lemmaK3mixed_eq}, it follows that twisted families with generalized functional invariant $(i,j,\alpha)=(1,1,1)$ of $X_{5511}$, $X_{6321}$, $X_{8211}$,  and $X_{9111}$ are families of $M_n$-lattice polarized K3 surfaces with $n=5, 6, 8, 9$, respectively. As before, we also obtain a continuously varying family of closed two-cycles $\Sigma_2(t)$. We therefore have the following:
\begin{corollary}
\label{MES+heun}
The twisted families with generalized functional invariant $(i,j,\alpha)=(1,1,1)$ given by Equation~(\ref{lemmaK3mixed_eq}), of the families in Corollary~\ref{lemma2heun} are families over the rational modular curves $\mathbb{H}/\Gamma_0(n)^+$ of $M_n$-lattice polarized K3 surfaces with $n=5, 6, 8, 9$. For each family, the period integral~(\ref{eqn:period_new}) is annihilated by the Picard-Fuchs operator $L^{(3)}_{t}(\frac{1}{4}; a, q)$ in Equation~(\ref{eqn:1L3heun}). In particular, the period over $\Sigma_2(t)$ is holomorphic at $t=0$ and given by
\beq
\label{period_lemmaK3heun}
  \omega  = (2 \pi i)^2 \;   {}_1F_0\!\left(\left. \frac{1}{2} \right| t\right) \star  \heun(a, q; 1,1,1,1 \, | t) \,,
\eeq
where parameters $(a, q)$ and singular fibers over $t=0, 1, a, \infty$ (before twisting) are given in Table~\ref{tab:HeunParameters}.  
\end{corollary}
\begin{proof}
The proof follows directly by checking that the singular fibers and Mordell-Weil groups for the families constructed in Lemma~\ref{lemmaK3mixed}
agree with the ones given by Dolgachev in \cite{MR1420220}. The rest of the proof is analogous to the proof of Corollary~\ref{cor:modular_K3_surfaces}.
\end{proof}
\begin{remark}
Identity~(\ref{eqn:heun_identity}) reflects the well-known decomposition of the Picard-Fuchs operator into a symmetric square for families of $M_n$-lattice polarized K3 surfaces of Picard-rank 19. If one considers non-rigid, smooth Calabi-Yau threefolds (non-isotrivially) fibered by K3 surfaces admitting a $M_n$-lattice polarization, then it wash shown in \cite{Doran:aa} that $1 \le n \le 11$, $n\not =10$ and that all such $n$ can be realized. Our twist construction provides explicit examples for such Calabi-Yau threefolds for $n \in \{1,2,3,4,5,6,8,9\}$.
\end{remark}
\par Applying our twist construction again yields families of Calabi-Yau threefolds whose Picard-Fuchs operators realize non-rigid Calabi-Yau operators with four regular singular points. As in Section~\ref{ssec:3folds_hg_even}, we also obtain a continuously varying family of closed three-cycles $\Sigma_3(t)$.  We have the following:
\begin{corollary}
\label{MES+heun_CY3}
The twisted families with generalized functional invariant $(k, l, \beta)=(1,1,1)$, given by Equation~(\ref{lemmaCYmixed_eq}), of the elliptic K3 surfaces in Corollary~\ref{MES+heun} are families over $\mathbb{P}^1 \backslash \{0, 1, a, \infty\}$ of Jacobian elliptic Calabi-Yau threefolds over $\mathbb{P}^1 \times \mathbb{P}^1$. For each family, the period integral~(\ref{eqn:period_new}) is annihilated by the Picard-Fuchs operator ${}_1L^{(4)}_{t}(\frac{1}{4}; a, q)$ in Equation~(\ref{eqn:CYO_4a}). In particular, the period over $\Sigma_3(t)$ is holomorphic at $t=0$ and given by
\beq
\label{period_lemmaCY3heun}
\begin{split}
  \omega  & =  (2 \pi i)^3 \; {}_1F_0\!\left(\left. \frac{1}{2} \right| t\right) \star \heun\left(a,\frac{q}{4};\frac{1}{4},\frac{3}{4},1,\frac{1}{2} \, \Big|t \right)^2
\end{split}  
\eeq
with parameters $(a, q)$ given in Table~\ref{tab:HeunParameters}.  
\end{corollary}
\begin{proof}
The proof is analogous to the proof of Lemma~\ref{lemmaCY3mixed} and Corollary~\ref{hg-case}.
\end{proof}
\par Applying base transformations between twists again greatly improves the scope of our twist construction. We make the following:
\begin{remark}
\label{rem:transfo}
Special function identities for the Heun function can be used to realize Picard-Fuchs operators ${}_1L^{(4)}_{t}(\alpha; a, q)$ in Equation~(\ref{eqn:CYO_4a}) for values other than $\alpha=\frac{1}{4}$ in Table~\ref{tab:4ExtRatHl}. As an example, we consider the case $\alpha=\frac{1}{2}$ where we use a sequence of identities for the Heun function found in~\cite{MR2367417}. For $\beta \in (0,1) \cap \mathbb{Q}$ and $a\not= 1$, we use the linear identity 
\beq
\begin{split}
 \heun\left(a, q; 1,1,1,1 \Big| x \right) = \dfrac{1}{1 - \frac{x}{a}} \; \heun\left(1-a,1-q;1,1,1,1 \Big| T_1(x) \right) 
\end{split}
\eeq
with $T_1(x)=\frac{(1-a)\,x}{x-a}$, the quadratic identity
\beq
 \heun\left(a,\frac{q}{4};\beta,1-\beta,1,\frac{1}{2} \Big| T_2(x) \right) = \left(1 - x\right)^{\beta} \; \, \heun\left(a',q'; 2\beta,1,1,2\beta \Big| x \right)
\eeq
with $T_2(x)=\frac{R\, x \, (a-x)}{1-x}$, where $a$ and $a'$ are related by
\beq
(a')^2 \, (1-a)^2 -16 \, (1-a') \, a =0 \;,
\eeq
and $q=4 \, R \, (q'-\beta a')$ and $R=\frac{1+a}{2(2-a')}$,  combined with the bi-quadratic quartic identity 
\beq
\begin{split}
 \heun\left(a,\frac{q}{4};\frac{1}{4},\frac{3}{4},1,\frac{1}{2} \Big| T_4(x) \right) = \left(1 - \frac{x^2}{a}\right)^{1/2} \; \heun\left(a, q; 1,1,1,1 \Big| x \right) \;,
\end{split}
\eeq
with $T_4(x)=\frac{4\, a \, x \, (1-x) \, (a-x)}{(a-x^2)^2}$. This implies that a Heun function of the form
\beqn
  \heun\left(a,\frac{q}{4};\frac{1}{2},\frac{1}{2},1,\frac{1}{2} \Big| T_2(x) \right)
\eeqn
is related to the Heun function
\beqn  
\dfrac{\sqrt{1-x}}{(1-\frac{x}{a})\sqrt{1-T_1(x)^2/a}} \; 
 \heun\left(1-a',\frac{1-q'}{4};\frac{1}{4},\frac{3}{4},1,\frac{1}{2} \Big|  T_4\big(T_1(x)\big) \right) \,.
\eeqn
In turn, the latter is realized as holomorphic period of an extremal rational surface after pullback by a base transformation and twist. The Picard-Fuchs operator of the twisted family, constructed analogously to Corollary~\ref{MES+heun_CY3}, then realizes the Calabi-Yau operators ${}_1L^{(4)}_{t}(\frac{1}{2}; a, q)$ for parameters $(a, q)$ given in Table~\ref{tab:HeunParameters}.
\end{remark}
\par Applying the variant of the twist construction for Section~\ref{sssec:product_twists} yields other families of Calabi-Yau threefolds whose Picard-Fuchs operators realize more non-rigid Calabi-Yau operators with four regular singular points. As in Section~\ref{ssec:even_case}, we also obtain a continuously varying family of closed three-cycles $\hat{\Sigma}_3(t)$.  We have the following:
\begin{corollary}
\label{MES+heun_CY3extra}
For every family $X \to \mathbb{P}^1 \backslash \{0, 1, a, \infty\}$ in Corollary~\ref{lemma2heun}, the twist family of $X$ with $X'_k$ in Table~\ref{tab:twist_rationalsurface} given by Equation~(\ref{CY3mtWEf}) for $k=2,3,4$, is a family over $\mathbb{P}^1 \backslash \{0, 1, a, \infty\}$ of Jacobian elliptic Calabi-Yau threefolds over $\mathbb{F}_n$ with $n=0, \dots, k$. The period integral~(\ref{eqn:period_new_producttwist}) is annihilated by the Picard-Fuchs operator ${}_2L^{(4)}_{t}(\mu; a, q)$ in Equation~(\ref{eqn:CYO_4b}). In particular, the period over $\hat{\Sigma}_3(t)$ is holomorphic at $t=0$ and given by
\beq
\label{period_lemmaCY3heun_extra}
\begin{split}
  \hat{\omega}  & =   (2 \pi i)^3 \;  \hpg21{\mu, 1-\mu}{1}{t}   \star \heun(a, q; 1,1,1,1 \, | t) \,,
\end{split}  
\eeq
where parameters $(a, q)$ are given in Table~\ref{tab:HeunParameters} and $\mu \in \{ \frac{1}{3}, \frac{1}{4}, \frac{1}{6} \}$.
\end{corollary}
\begin{proof}
The proof is analogous to the proof of Corollary~\ref{even-case_b} in Section~\ref{ssec:even_case}.
\end{proof}
\begin{remark}
Corollary~\ref{MES+heun_CY3}, Corollary~\ref{MES+heun_CY3extra}, and Remark~\ref{rem:transfo} realize 30 non-rigid Calabi-Yau operators with four regular singular points as Picard-Fuchs operators of families of Calabi-Yau threefolds obtained by our twist construction, namely all operators ${}_1L^{(4)}_{t}(\mu; a, q)$  in Equation~(\ref{eqn:CYO_4a}) with $\mu \in \{ \frac{1}{2}, \frac{1}{4} \}$ and operators ${}_2L^{(4)}_{t}(\mu; a, q)$ in Equation~(\ref{eqn:CYO_4b}) with $\mu \in \{ \frac{1}{2}, \frac{1}{3}, \frac{1}{4}, \frac{1}{6} \}$, for parameters $(a, q)$ given in Table~\ref{tab:HeunParameters} up to rescaling $t \mapsto \lambda a t$ of the affine base coordinate according to Table~\ref{tab:VHS6}.
\end{remark}
\section{Discussion and Outlook}
We introduced a twist construction to iteratively obtain families of Calabi-Yau $n$-folds over $\mathbb{P}^1 \backslash \{0,1\infty\}$, internally elliptically fibered by Calabi-Yau $(n-1)$-folds. Our construction is a geometric generalization of Katz’s middle convolution combined with an additional rational pullback operation on the internal fibration. By computing the periods of a holomorphic top-form over explicit topological cycles and expressing the results in hypergeometric terms, we produced Weierstrass models whose Picard-Fuchs operators realize all 60 Calabi-Yau operators inducing $\operatorname{Sp}(4,\mathbb{C})$-rigid, quasi-unipotent local systems of weight three and rank four having a maximal unipotent element. This is important because the usual middle convolution only is guaranteed to produce the $\operatorname{GL}(4,\mathbb{C})$-rigid monodromy tuples. Our iterative construction provides a unifying construction for many examples of elliptic curves, K3 surfaces, and Calabi-Yau threefolds  considered in the context of mirror symmetry, e.g., families considered in \cites{MR1877764, MR1738869, 1501.04024} and examples in~\cite{MR1860046}.   Moreover,  by restricting the family parameter to special values one readily obtains elliptic curves, K3 surfaces, and Calabi-Yau  threefolds with  properties such as CM, admitting a Shioda-Inose structure associated with abelian surfaces with quaternionic multiplication, or rigidity. Some of these arithmetic properties were investigated in \cite{MR1684614}, and their fiberwise Picard-Fuchs equations were computed in \cite{MR1881612}. These results are all reproduced by our iterative construction. 
\par  We also used our iterative construction to obtain families with four singular fibers, such as all extremal Jacobian rational elliptic surfaces with four singular fibers from the Miranda-Persson list \cite{MR867347}, and models for families of $M_n$-lattice polarized K3 surfaces for $n \le 9$ with $n\not = 7$. On the level of periods, the role of the Gauss hypergeometric function was then replaced by the Heun function. Identities for the hypergeometric function were replaced by identities for the Heun equation, for example relations found in \cites{MR2367417,MR2506172}. In this way, our iterative construction again provided a unified geometric approach for many differential equations associated with K3 surfaces studied in isolation \cites{MR749676, MR1034260, MR783555,MR875089,MR2428514}. Our iterative construction also reproduced many of the classical examples of threefolds investigated in the context of mirror symmetry, for example in \cites{MR1372822, MR1621573}.  We hope that the obtained new families and our iterative technique itself could be of interest for ``global mirror symmetry” frameworks, e.g., see \cites{MR3210178, MR3112509}, curve-counting on 3-folds, F-theory, for studying thin vs.\!~arithmetic monodromy, and maybe in the future even homological mirror symmetry.
\par However, the geometric realization of the Calabi-Yau operators in the odd case in Theorem~\ref{DoranMalmendier} is not yet completely satisfactory: instead of producing families of threefolds whose Picard-Fuchs operators realize the fourth-order Calabi-Yau operators of the odd case directly, we constructed families of fourfolds instead, such that the Yifan-Yang pullback of their rank-five Picard-Fuchs operators realized the Calabi-Yau operators. The observant reader might have noticed that a similar situation already occurred at lower dimension in our iterative construction. The twist construction applied to any family of elliptic curves from Table~\ref{tab:3ExtRatHg} produced families of K3 surfaces whose Picard-Fuchs operators were symmetric squares of  rank-two and degree-one Calabi-Yau operators. In fact, Clausen's identity~(\ref{Clausen}) expresses the holomorphic K3 periods as squares of Gauss hypergeometric functions. Using the hypergeometric function identity~(\ref{rel_quad}), we were able to relate the (symmetric) square root back to the holomorphic solution of the Picard-Fuchs equation before the twist. In this sense, carrying out a twist and taking a (symmetric) square root is equivalent to carrying out a quadratic transformation on the parameter space of the original family.  Hodge-theoretically this is due to the fact that the K3 surfaces of Picard-rank $19$ or $18$ admit Shioda-Inose structures relating them to Kummer surfaces associated to products of elliptic curves. We have already proved that, at least in one case,  a similar Hodge-theoretic interpretation exists for the exterior square root of the simplest Calabi-Yau operator in the odd case as well.
\par Moreover, we expect that our iterative construction of a transcendental cycle for the holomorphic period can be extended to obtain a full basis of transcendental cycles.  Such bases would in turn allow us to construct the integral monodromy matrices for each family. This is important because the full period lattices are a powerful tool to distinguish examples of different geometric variations of Hodge structure over $\mathbb{Z}$ that are isomorphic over $\mathbb{R}$. For example, the quintic-mirror and the quintic-mirror twin family share the exact same Picard-Fuchs equation, but they have different ranks in the even dimensional cohomologies \cite{MR2282973}. These results will be the subject of a forthcoming article.
\bibliographystyle{amsplain}
\bibliography{references}{}
\end{document}